\newcommand{\N} {\mathbb{N}}
\newtheorem{theo}{Theorem}[section]
\newtheorem{conj}{Conjecture}[section]
\newtheorem{prop}{Proposition}[section]
\newtheorem{lem}{Lemma}[section]
\theoremstyle{definition} \newtheorem{defi}{Definition}[section]}
\theoremstyle{definition} }
\theoremstyle{definition} }
\theoremstyle{remark} \newtheorem{rem}{Remark}[section]}
\theoremstyle{definition} }
\renewenvironment{proof}{\noindent{\bf Proof.}}{\qed}
\author{Ange Bigeni}
\address{Institut Camille Jordan, Universit\'e Claude Benard Lyon 1 (France)
}
\email{bigeni@math.univ-lyon1.fr}
\title[A bijection between irreducible $k$-shapes and surjective pistols]{A bijection between irreducible $k$-shapes the surjective pistols of height $k-1$}
\keywords{Genocchi numbers; Gandhi polynomials; (irreducible) $k$-shapes; surjective pistols}
\begin{document}
\maketitle
\begin{abstract}
This paper constructs a bijection between irreducible $k$-shapes and surjective
pistols of height $k-1$, which carries the "free $k$-sites" to the
fixed points of surjective pistols. The bijection confirms a conjecture of
Hivert and Mallet (FPSAC 2011) that the number of irreducible $k$-shape is counted by the
Genocchi number $G_{2k}$.
\end{abstract}

\section{Introduction}
\label{sec:intro}
\hspace*{-5.9mm} The study of $k$-shapes arises naturally in the combinatorics of $k$-Schur
functions (see \cite{LLMS}). In a 2011 FPSAC paper, Hivert and Mallet showed that the generating function of all $k$-shapes was a rational function whose numerator $P_k(t)$ was defined in terms of what they called irreducible $k$-shapes. The sequence of numbers of irreducible $k$-shapes $(P_k(1))_{k \geq 1}$ seemed to be the sequence of Genocchi numbers $(G_{2k})_{k \geq 1} = (1,1,3,17,155,2073,\hdots)$~\cite{genocchi}, which may be defined by $G_{2k} = Q_{2k-2}(1)$ for all $k \geq 2$ (see \cite{Carlitz,Riordan}) where $Q_{2n}(x)$ is the Gandhi polynomial defined by the recursion $Q_2(x) =x^2$ and 
\begin{equation} \label{inductionformulagandhi} 
Q_{2k+2}(x) = x^2(Q_{2k}(x+1)-Q_{2k}(x)). \end{equation}
Hivert and
Mallet defined a statistic $fr(\lambda)$ counting the so-called free $k$-sites on 
the partitions $\lambda$ in the set of irreducible $k$-shapes $IS_k$, and
conjectured that 
\begin{equation} \label{equationgandhi}
Q_{2k-2}(x) = \sum_{\lambda \in IS_k} x^{fr(\lambda)+2}.
\end{equation}
The goal of this paper is to construct a bijection between irreducible $k$-shapes and surjective pistols of height $k-1$, such that every free $k$-site of an irreducible $k$-shape is carried to a fixed point of the corresponding surjective pistol. Since the surjective pistols are known to generate the Gandhi polynomials with respect to the fixed points (see Theorem \ref{theoremegandhi}), this bijection will imply Formula~(\ref{equationgandhi}).\\
The rest of this paper is organized as follows. In Section \ref{sec:background}, we give some background about surjective pistols (in Subsection \ref{sec:surjectivepistols}), partitions, skew partitions and $k$-shapes (in Subsection \ref{sec:backgroundpartitions}), then we focus on irreducible $k$-shapes (in Subsection \ref{sec:irreduciblekshapes}) and enounce Conjecture \ref{conjecturepistol} raised by Mallet (which implies Formula \ref{equationgandhi}), and the main result of this paper, Theorem \ref{theoremebijection}, whose latter conjecture is a straight corollary. In Section \ref{sec:preliminaires}, we give preliminaries of the proof of Theorem \ref{theoremebijection} by introducing the notion of partial $k$-shapes. In Section \ref{sec:proof}, we demonstrate Theorem \ref{theoremebijection} by defining two inverse maps $\varphi$ (in Subsection \ref{sec:varphi}) and $\phi$ (in Subsection \ref{sec:phi}) which connect irreducible $k$-shapes and surjective pistols and keep track of the two statistics. Finally, in Section \ref{sec:workinprogress}, we explore the corresponding interpretations of some generalizations of the Gandhi polynomials, generated by the surjective pistols with respect to refined statistics, on the irreducible $k$-shapes.

\section{Definitions and main result}
\label{sec:background}

\subsection{Surjective pistols}
\label{sec:surjectivepistols}

For all positive integer $n$, we denote by $[n]$ the set $\{1,2,\hdots,n\}$. A \textit{surjective pistol of height $k$} is a surjective map $f : [2k] \rightarrow \{2,4,\hdots,2k\}$ such that $f(j) \geq j$ for all $j \in [2k]$. We denote by $SP_{k}$ the set of surjective pistols of height $k$. By abuse of notation, we assimilate a surjective pistol $f \in SP_{k}$ into the sequence $(f(1),f(2),\hdots, f(2k))$.
A \textit{fixed point} of $f \in SP_k$ is an integer $j \in [2k]$ such that $f(j) = j$. We denote by $fix(f)$ the number of fixed points different from $2k$ (which is always a fixed point). A surjective pistol of height $k$ can also be seen as a tableau made of $k$ right-justified rows of length $2,4,6,\hdots,2k$ (from top to bottom), such that each row contains at least one dot, and each column contains exactly one dot. The map $f$ corresponding to such a tableau would be defined as $f(j) = 2(\lceil j/2 \rceil + z_j)$ where the $j$-th column of the tableau contains a dot in its $(1+z_j)$-th cell (from top to bottom) for all $j \in [2k]$. For example, if $f = (2,4,4,8,8,6,8,8) \in SP_4$, the tableau corresponding to $f$ is depicted in Figure \ref{exempletableau}.
\begin{figure}[!h]
\centering
\includegraphics[width=4cm]{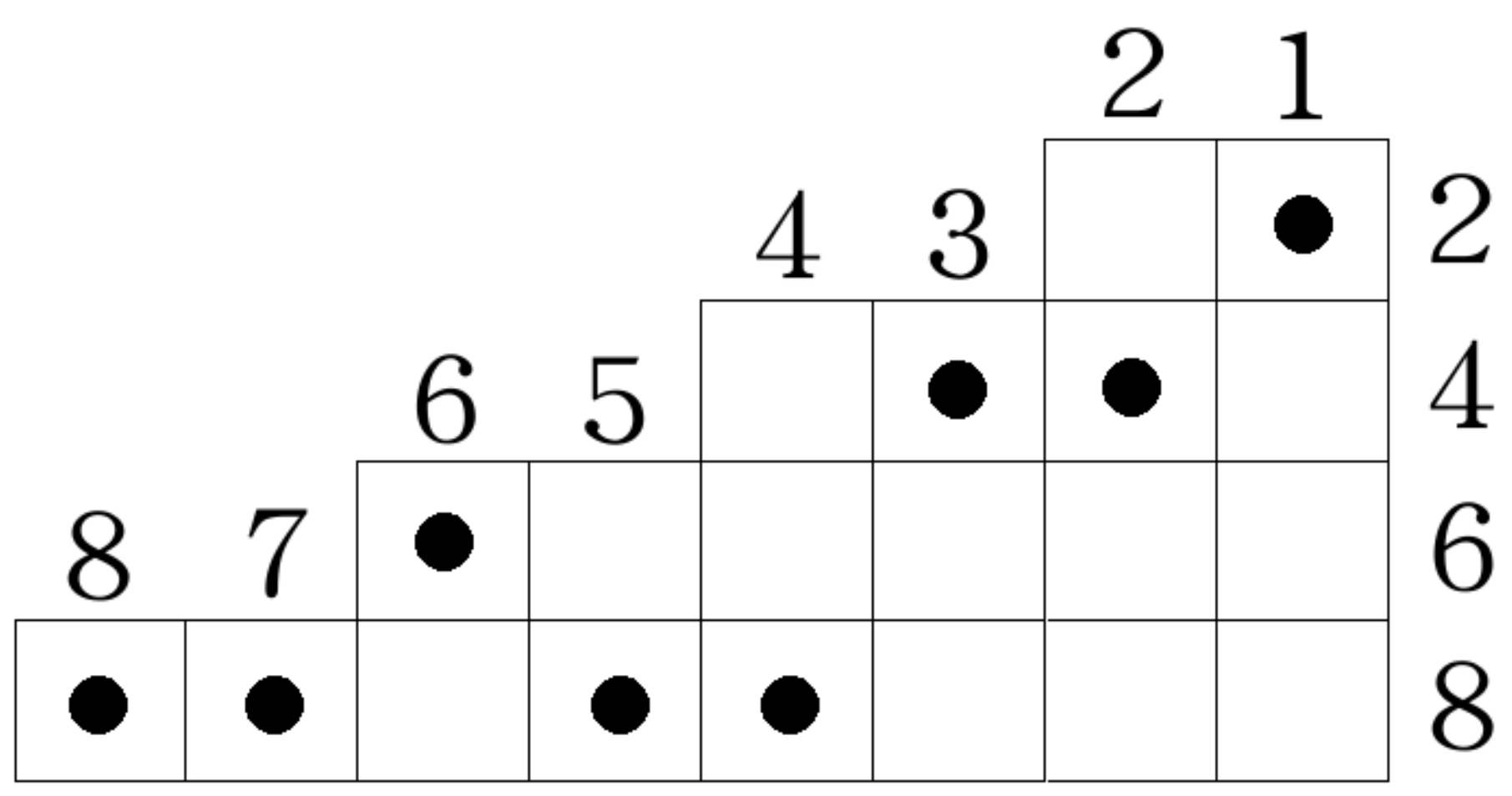}
\caption{Tableau corresponding to $f = (2,4,4,8,8,6,8,8) \in SP_4$.}
\label{exempletableau}
\end{figure}
\\
In particular, an integer $j = 2i$ is a fixed point of $f$ if and only if the dot of the $2i$-th column of the corresponding tableau is at the top of the column. For example, the surjective pistol $f$ of Figure \ref{exempletableau} has $2$ fixed points $6$ and $8$, but $fix(f) = 1$ (because the fixed point $2k=8$ is not counted by the statistic). The following result is due to Dumont.

\begin{theo}[\cite{Dumont2}] \label{theoremegandhi}
For all $k \geq 2$, the Gandhi polynomial $Q_{2k}(x)$ has the following combinatorial interpretation:
$$Q_{2k}(x) = \sum_{f \in SP_{k}} x^{fix(f)+2}.$$
\end{theo}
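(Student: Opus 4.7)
The plan is to induct on $k$, showing that the polynomials $\widetilde{Q}_{2k}(x) := \sum_{f \in SP_k} x^{fix(f)+2}$ coincide with the Gandhi polynomials $Q_{2k}(x)$ by checking that they share the same initial value and obey the same recurrence $Q_{2k+2}(x) = x^2(Q_{2k}(x+1) - Q_{2k}(x))$. The base case $k=1$ is immediate: $SP_1$ reduces to the single map $(2,2)$ for which $fix=0$, yielding $\widetilde{Q}_2(x)=x^2=Q_2(x)$.

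For the inductive step, I would assume $\widetilde{Q}_{2k} = Q_{2k}$ and aim to prove
\[
\widetilde{Q}_{2k+2}(x) = x^2 \bigl(\widetilde{Q}_{2k}(x+1) - \widetilde{Q}_{2k}(x)\bigr).
\]
Using the expansion $(x+1)^{n+2} - x^{n+2} = \sum_{i=0}^{n+1}\binom{n+2}{i}x^i$ applied with $n=fix(f)$, the right-hand side rewrites combinatorially as a sum over pairs $(f, S)$, where $f\in SP_k$ and $S$ is a proper subset of some canonical set $\Lambda(f)$ of cardinality $fix(f)+2$. A convenient choice is to let $\Lambda(f)$ consist of the fixed points of $f$ in $[2k]$ (there are $fix(f)+1$ of them, counting the compulsory fixed point $2k$), enlarged by one extra sentinel token. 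Each proper subset $S \subsetneq \Lambda(f)$ then contributes $x^{|S|+2}$.

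The heart of the argument is then a statistic-preserving bijection
\[
\Psi: SP_{k+1} \longrightarrow \bigsqcup_{f \in SP_k} \bigl\{ S : S \subsetneq \Lambda(f)\bigr\},
\]
with $\Psi(g) = (f,S)$ satisfying $fix(g) = |S|$. In tableau language, the bottom (length $2k+2$) row of $g \in SP_{k+1}$ always holds dots at columns $2k+1$ and $2k+2$ plus possibly at columns $j \in [2k]$ with $g(j) = 2k+2$. My $\Psi$ would excise this bottom row and reinsert the orphan dots via a canonical bumping procedure: process the orphan columns from right to left, placing each orphan dot in the lowest admissible row above, with further upward bumps of pre-existing dots as needed to maintain exactly one dot per column. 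The output is a valid surjective pistol $f$ of height $k$; the subset $S$ records which fixed points of $f$ were \emph{created} by bumps (as opposed to inherited from $g$), while the sentinel in $\Lambda(f)$ reflects the fact that the dots at columns $2k+1, 2k+2$ are never removed, forcing the strict inclusion $S \subsetneq \Lambda(f)$.

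The principal obstacle is confirming that the bumping procedure is well-defined and invertible and that its effect on fixed points translates cleanly into the marking by $S$: each fixed point of $g$ in $[2k]$ must correspond to a unique element of $S$, and conversely. This demands a careful case analysis of how orphan columns interact with pre-existing dots in the same row (ensuring $f(j)\geq j$ is never violated after a bump), together with an explicit construction of the inverse $\Psi^{-1}$, which, given $(f,S)$, reinserts a bottom row by designating for each element of $S$ a unique dot of $f$'s tableau to be relocated downward.
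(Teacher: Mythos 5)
This theorem is not proved in the paper at all: it is quoted as a known result of Dumont, so there is no internal proof to compare against. Judged on its own terms, your proposal has the right skeleton (verify the base case $Q_2(x)=x^2$ and show that $\sum_{f\in SP_k}x^{fix(f)+2}$ satisfies the Gandhi recurrence by interpreting $(x+1)^{fix(f)+2}-x^{fix(f)+2}$ as a sum over proper subsets of a set of size $fix(f)+2$), and the base case and the binomial bookkeeping are correct. But the entire content of the inductive step is the bijection $\Psi$, and that is exactly the part you leave as a sketch. Worse, the one structural claim you do make about it appears to point the wrong way: in the tableau model a fixed point $2i$ of $g\in SP_{k+1}$ (with $i\le k$) is a dot sitting at the top of its column, and deleting the bottom row and pushing orphan dots upward can never dislodge such a dot; hence every fixed point of $g$ in $[2k]$ survives as an \emph{inherited} fixed point of $f$, while bumping can only \emph{create} additional fixed points. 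So if $S$ is defined as the set of fixed points of $f$ created by bumps, one expects $|S|$ to measure $fix(f)-fix(g)$ (up to the boundary effect at $2k$), not $fix(g)$, and the identity $fix(g)=|S|$ that your generating-function computation requires is not established and looks false as stated. In addition, the bumping procedure itself is not pinned down (e.g.\ simply dropping every orphan into the value-$2k$ row already yields a valid element of $SP_k$, so "lowest admissible row" does not by itself force a canonical, invertible rule), the surjectivity of $f$ after reinsertion is not checked, and the strictness $S\subsetneq\Lambda(f)$ is asserted rather than derived. These are genuine gaps, not polishing issues.

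A cleaner route, and essentially the classical one, is to run your induction on the statistic $max$ (the number of $j\in[2k-2]$ with $f(j)=2k$) instead of $fix$: given $f\in SP_k$, the points of $f^{-1}(2k)$ (there are $max(f)+2$ of them) may independently either keep the value $2k$ or be promoted to $2k+2$, the only forbidden choice being to promote all of them (which would kill surjectivity at $2k$); adding the two compulsory new points $2k+1,2k+2$ then gives a transparent bijection onto $SP_{k+1}$ realizing $x^2\bigl((x+1)^{max(f)+2}-x^{max(f)+2}\bigr)$ with $max(g)=|S|$. One then transfers the result from $max$ to $fix$ via the equidistribution of these two statistics, i.e.\ the symmetry of the Dumont--Foata polynomial recalled in Theorem \ref{theoremeDF}; that symmetry is itself a nontrivial input that your direct $fix$-based argument is implicitly trying to reprove inside the induction, which is why the bijection is so hard to write down.
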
 

%For example, the $G_6 = 3$ surjective pistols of height $2$ (see Figure \ref{exemplepistol2}) do generate the Gandhi polynomial $Q_4(x) = 2x^3 + x^2$.

%\begin{figure}[!h]
%\centering
%\includegraphics[width=5cm]{}
%\caption{The $3$ elements of $SP_2$.}
%\label{exemplepistol2}
%\end{figure}

\subsection{Partitions, skew partitions, $k$-shapes}
\label{sec:backgroundpartitions}

A partition is a a finite sequence of positive integers $\lambda = (\lambda_1,\lambda_2,\hdots,\lambda_m)$ such that $\lambda_1 \geq \lambda_2 \geq \hdots \geq \lambda_m$. By abuse of definition, we consider that a partition may be empty (corresponding to $m=0$). A convenient way to visualize a partition $\lambda = (\lambda_1,\lambda_2,\hdots,\lambda_m)$ is to consider its Ferrers diagram (denoted by $[\lambda]$), which is composed of cells organized in left-justified rows such that the $i$-th row (from bottom to top) contains $\lambda_i$ cells. The \emph{hook length} of a cell $c$ is defined as the number of cells located to its right in the same row (including $c$ itself) or above it in the same column. If the hook length of a cell $c$ equals $h$, we say that $c$ is hook lengthed by the integer $h$. For example, the Ferrers diagram of the partition $\lambda = (4,2,2,1)$ is represented in Figure \ref{exempleferrers}, in which every cell is labeled by its own hook length.
\begin{figure}[!h]
\centering
\begin{minipage}{.5\textwidth}
  \centering
\includegraphics[width=2cm]{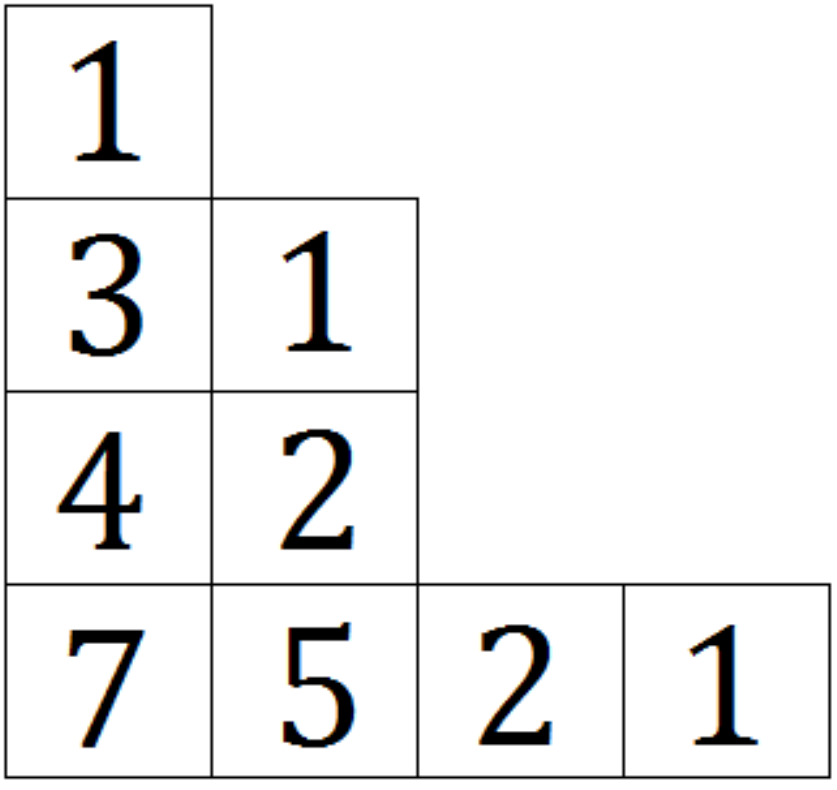}
\caption{Ferrers diagram of the partition $\lambda=(4,2,2,1)$.}
\label{exempleferrers}
\end{minipage}%
\begin{minipage}{.5\textwidth}
  \centering
\includegraphics[width=2cm]{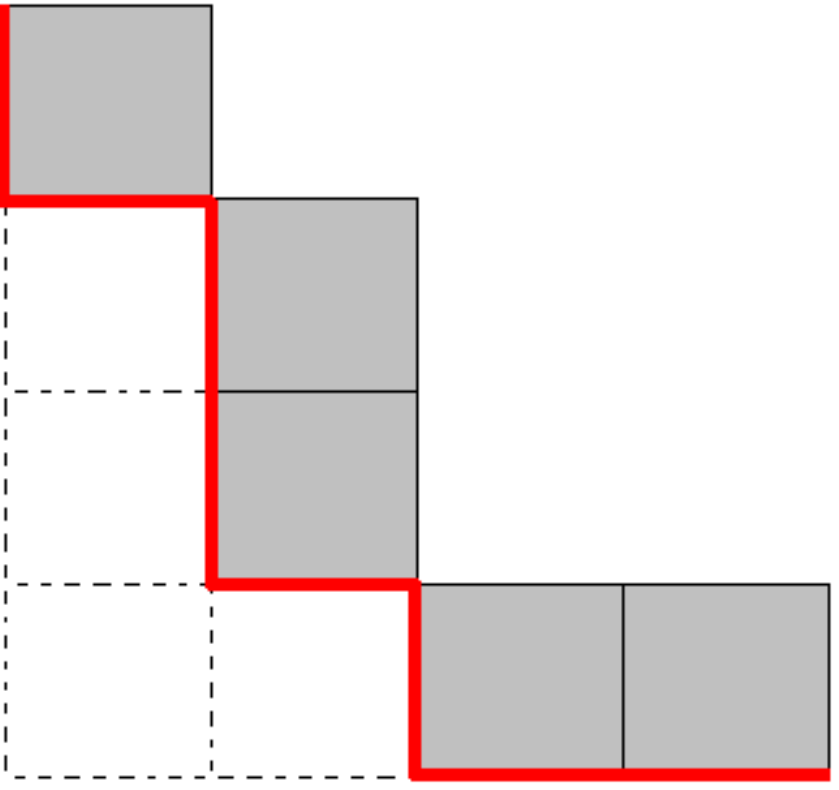}
\caption{Skew partition $\lambda \backslash \mu$.}
\label{exempleskew}
\end{minipage}
\end{figure}
\\
We will sometimes assimilate partitions with their Ferrers diagrams.
If two partitions $\lambda = (\lambda_1,\hdots,\lambda_p)$ and $\mu = (\mu_1,\hdots,\mu_q)$ (with $q \leq p)$ are such that $\mu_i \leq \lambda_i$ for all $i \leq q$, then we write $\mu \subseteq \lambda$ and we define the \textit{skew partition} $s = \lambda/\mu$ as the diagram $[\lambda] \backslash [\mu]$, the Ferrers diagram $[\mu]$ appearing naturally in $[\lambda]$. For example, if $\lambda = (4,2,2,1)$ and $\mu = (2,1,1)$, then $\mu \subseteq \lambda$ and the skew partition $\lambda \backslash \mu$ is the diagram depicted in Figure \ref{exempleskew}.
For all skew partition $s$, we name \textit{row shape} (respectively \textit{column shape}) of $s$, and we denote by $rs(s)$ (resp. $cs(s)$), the sequence of the lengths of the rows from bottom to top (resp. the sequence of the heights of the columns from left to right) of $s$. Those sequences are not necessarily partitions. For example, if $s$ is the skew partition depicted in Figure \ref{exempleskew}, then $rs(s) = (2,1,1,1)$ and $cs(s) = (1,2,1,1)$ (in particular $cs(s)$ is not a partition). If the lower border of $s$ is continuous, \textit{i.e.}, if it is not fragmented into several pieces,, we also define a canonical partition $<s>$ obtained by inserting cells in the empty space beneath every column and on the left of every row of $s$. For example, if $s$ is the skew partition depicted in Figure \ref{exempleskew}, the lower border of $s$ is drawed as a thin red line which is continuous, and $<s>$ is simply the original partition $\lambda = (4,2,2,1)$.\\
Now, consider a positive integer $k$. For all partition $\lambda$, it is easy to see that the diagram composed of the cells of $[\lambda]$ whose hook length does not exceed $k$, is a skew partition, that we name \textit{$k$-boundary} of $\lambda$ and denote by $\partial^k(\lambda)$. Incidentally, we name \textit{$k$-rim} of $\lambda$ the lower border of $\partial^k(\lambda)$ (which may be fragmented), and we denote by $rs^k(\lambda)$ (respectively $cs^k(\lambda)$) the sequence $rs(\partial^k(\lambda))$ (resp. the sequence $cs(\partial^k(\lambda))$. For example, the $2$-boundary of the partition $\lambda = (4,2,2,1)$ depicted in Figure \ref{exempleferrers}, is in fact the skew partition of Figure \ref{exempleskew}. Note that if the $k$-rim of $\lambda$ is continuous, then the partition $<\partial^k(\lambda)>$ is simply $\lambda$.
\begin{defi}[\cite{LLMS}] A $k$-shape is a partition $\lambda$ such that the sequences $rs^k(\lambda)$ and $cs^k(\lambda)$ are also partitions.
\end{defi} 
\hspace*{-5.9mm} For example, the partition $\lambda = (4,2,2,1)$ depicted in Figure \ref{exempleferrers} is not a $2$-shape since $cs^2(\lambda) = (1,2,1,1)$ is not a partition, but it is a $k$-shape for any $k \geq 4$ (for instance $cs^5(\lambda) = (3,3,1,1)$ and $rs^5(\lambda) = (3,2,2,1)$ are partitions, so $\lambda$ is a $5$-shape, see Figure \ref{exemplegrid}). Note that the $k$-rim of a $k$-shape $\lambda$ is necessarily continuous, thence $\lambda = < \partial^k(\lambda) >$. Consequently, we will sometimes assimilate a $k$-shape into its $k$-boundary.

\subsection{Irreducible $k$-shapes}
\label{sec:irreduciblekshapes}

Let $\lambda$ be a $k$-shape and $(u,v)$ a pair of positive integers. Following \cite{HM}, we denote by $H_u(\lambda)$ (respectively $V_v(\lambda)$) the set of all cells of the skew partition $\partial^k(\lambda)$ that are contained in a row of length $u$ (resp. the set of all cells of $\partial^k(\lambda)$ that are contained in a column of height $v$).
For example, consider the $5$-shape $\lambda = (4,2,2,1)$.
\begin{figure}[!h]
\centering
%\begin{minipage}{.5\textwidth}
 % \centering
%\includegraphics[width=2cm]{}
%\caption{Skew partition $\partial^5(\lambda)$ with $\lambda = (4,2,2,1)$.}
%\label{exemplekshape}
%\end{minipage}%
\begin{minipage}{.5\textwidth}
  \centering
\includegraphics[width=3cm]{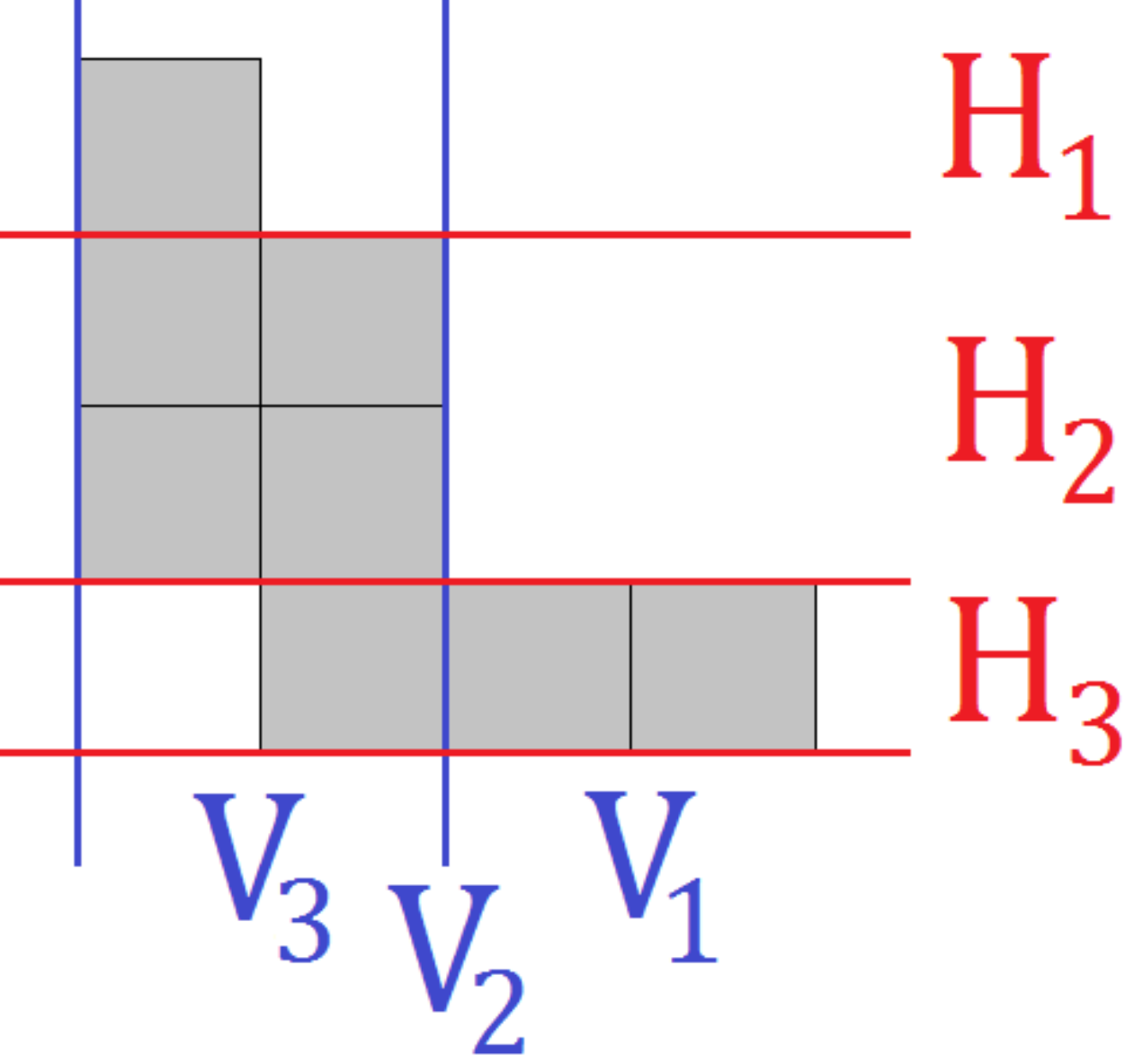}
\caption{Skew partition $\partial^5(\lambda)$ with $\lambda = (4,2,2,1)$.}
\label{exemplegrid}
\end{minipage}
\end{figure}
The sets $(H_u(\lambda))_{u \geq 1}$ and $(V_v(\lambda))_{v \geq 1}$ are outlined in Figure \ref{exemplegrid} (in this example the set $V_2(\lambda)$ is empty). Note that for all $k$-shape $\lambda$ and for all pair of positive integers $(u,v)$, if the set $H_u(\lambda) \cap V_v(\lambda)$ is not empty, then there exists a cell in $V_v(\lambda)$ hook lengthed by at least $u+v-1$. Consequently, if $u+v > k+1$, then by definition of $\partial^k(\lambda)$ the set $H_u(\lambda) \cap V_v(\lambda)$ must be empty.
\\
Hivert and Mallet~\cite{HM} defined an operation which consists in inserting, in a $k$-shape, a $l$-rectangle (namely, a partition whose Ferrers diagram is a rectangle and whose largest hook length is $l$) with $l \in \{k-1,k\}$, the result of the operation being a new $k$-shape. They defined \textit{irreducible $k$-shapes} as $k$-shapes that cannot be obtained in such a way. In this paper, we use an equivalent definition in view of the Proposition 3.8 of \cite{HM}.

\begin{defi}[\cite{HM}]
An \textit{irreducible $k$-shape} is a $k$-shape $\lambda$ such that the sets $H_{i}(\lambda) \cap V_{k-i}(\lambda)$ and $H_{i}(\lambda) \cap V_{k+1-i}(\lambda)$ contain at most $i-1$ horizontal steps of the $k$-rim of $\lambda$ for all $i \in [k]$. We denote by $IS_k$ the set of irreducible $k$-shapes.
\end{defi}

\hspace*{-5.9mm} For example, the $5$-shape $\lambda = (4,2,2,1)$ (see Figure \ref{exemplegrid}) is irreducible: the sets $H_i(\lambda) \cap V_{5-i}(\lambda)$ and $H_j(\lambda) \cap V_{6-j}(\lambda)$ are empty if $i \neq 2$ and $j \neq 3$, and the two sets $H_2(\lambda) \cap V_{3}(\lambda)$ and $H_3(\lambda) \cap V_{3}(\lambda)$ contain respectively $1<2$ and $1<3$ horizontal steps of the $k$-rim of $\lambda$. \\
In general, it is easy to see that for any $k$-shape $\lambda$ to be irreducible, the sets $H_1(\lambda) \cap V_k(\lambda)$ and $H_k(\lambda) \cap~V_1(\lambda)$ must be empty, and by definition the set $H_1(\lambda) \cap V_{k-1}(\lambda)$ must contain no horizontal step of the $k$-rim of $\lambda$. In particular, for $k=1$ or $2$ there is only one irreducible $k$-shape: the empty partition.

\begin{defi}[\cite{HM,HM2}]
Let $\lambda$ be an irreducible $k$-shape with $k \geq 3$. For all $i \in [k-2]$, we say that the integer $i$ is a \textit{free $k$-site} of $\lambda$ if the set $H_{k-i}(\lambda) \cap V_{i+1}(\lambda)$ is empty. We define $\overrightarrow{fr}(\lambda)$ as the vector $(t_1,t_2,\hdots,t_{k-2}) \in \{0,1\}^{k-2}$ where $t_i = 1$ if and only if $i$ is a free $k$-site of $\lambda$. We also define $fr(\lambda)$ as $\sum_{i=1}^{k-2} t_i$ (the quantity of free $k$-sites of $\lambda$).
\end{defi}

\hspace*{-5.9mm} For example, the irreducible $5$-shape $\lambda = (4,2,2,1)$ depicted in Figure \ref{exemplegrid} is such that $\overrightarrow{fr}(\lambda) = (1,0,1)$.
In order to prove the conjecture of Formula \ref{equationgandhi}, and in view of Theorem \ref{theoremegandhi},
Hivert and Mallet proposed to construct a bijection $\phi : IS_k \rightarrow SP_{k-1}$ such that $fix(\phi(\lambda)) = fr(\lambda)$ for all $\lambda$. Mallet~\cite{HM2} refined the conjecture by introducing a vectorial version of the statistic of fixed points: for all $f \in SP_{k-1}$, we define $\overrightarrow{fix}(f)$ as the vector $(t_1,\hdots,t_{k-2}) \in \{0,1\}^{k-2}$ where $t_i = 1$ if and only if $f(2i) = 2i$ (in particular $fix(f) = \sum_i t_i$).

\begin{conj}[\cite{HM2}] \label{conjecturepistol}
For all $k \geq 3$ and $\overrightarrow{v} = (v_1,v_2,\hdots,v_{k-2}) \in \{0,1\}^{k-2}$, the number of irreducible $k$-shapes $\lambda$ such that $\overrightarrow{fr}(\lambda) = \overrightarrow{v}$ is the number of surjective pistols $f \in SP_{k-1}$ such that $\overrightarrow{fix}(f) = \overrightarrow{v}$.
\end{conj}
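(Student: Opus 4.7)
The statement is the componentwise refinement of the numerical identity $|IS_k|=|SP_{k-1}|$, so the natural route (announced in the introduction) is to produce an explicit bijection $\varphi:IS_k\to SP_{k-1}$, with inverse $\phi$, satisfying $\overrightarrow{fr}(\lambda)=\overrightarrow{fix}(\varphi(\lambda))$. Since both statistics take values in $\{0,1\}^{k-2}$ and are indexed by the same set $[k-2]$, the conjecture follows by restricting the bijection to each fiber $\overrightarrow{v}\in\{0,1\}^{k-2}$. So the real work is Theorem \ref{theoremebijection}; Conjecture \ref{conjecturepistol} is a formal corollary. I would not try induction on $k$ via the Gandhi recursion directly, because there is no obvious ``peeling'' on irreducible $k$-shapes that matches $Q_{2k+2}(x)=x^2(Q_{2k}(x+1)-Q_{2k}(x))$; instead I would introduce an intermediate object that can be built up cell-by-cell on the $k$-shape side and column-by-column on the pistol side, and interpolate.

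Concretely, I would first introduce a \emph{partial $k$-shape}: a skew diagram which already satisfies the row/column partition conditions of a $k$-shape on its ``completed'' columns (say, the rightmost ones), together with a distinguished index recording how much has been constructed so far. This matches the structure of a surjective pistol, whose tableau is naturally processed column by column from right to left. To define $\phi:SP_{k-1}\to IS_k$, I would scan $f=(f(1),\dots,f(2k-2))$ from right to left: starting with the empty partial $k$-shape, at step $j$ I would use the value $f(j)$ (equivalently, the position $1+z_j$ of the dot in column $j$ of the tableau of $f$) to decide whether to insert a new row of some length, to extend an existing row, or to stack a new cell onto an existing column of the current partial $k$-shape. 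The two parity cases $j=2i-1$ and $j=2i$ should play different roles: one determines a horizontal extension and the other a vertical one, and the condition ``$f(2i)=2i$'' (the dot is at the top of its column) is exactly what should force the set $H_{k-i}(\lambda)\cap V_{i+1}(\lambda)$ to be vacated at the end of the construction, i.e.\ force $i$ to be a free $k$-site. The inverse map $\varphi$ reads the columns of $\partial^k(\lambda)$ from right to left and translates the local row/column shape at each column back into a value of $f(j)$; the fact that $cs^k(\lambda)$ and $rs^k(\lambda)$ are partitions should guarantee $f(j)\geq j$, and the constraint that $H_1\cap V_k$, $H_k\cap V_1$, and the horizontal-step part of $H_1\cap V_{k-1}$ are empty should guarantee surjectivity onto $\{2,4,\dots,2k-2\}$.

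The main obstacle, as I see it, is not defining $\phi$ and $\varphi$ as procedures but proving three things simultaneously at every step of the iteration: (i)~the partial $k$-shape produced by $\phi$ remains \emph{valid}, meaning at the end it belongs to $IS_k$, which requires the full irreducibility condition $|H_i\cap V_{k-i}|\leq i-1$ and $|H_i\cap V_{k+1-i}|\leq i-1$ to be preserved under each insertion; (ii)~the local reading rule used by $\varphi$ at column $j$ produces a value in $\{2,4,\dots,2k-2\}$ satisfying $f(j)\geq j$ and is consistent with the rule used at column $j+1$; (iii)~the two maps undo each other step by step, i.e.\ the local extension used by $\phi$ at step $j$ is exactly inverted by the local reading used by $\varphi$ at the corresponding column. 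Because irreducibility is a global condition involving many pairs $H_i\cap V_{k-i}$, assertion (i) in particular will force one to track several invariants on the partial $k$-shape during the induction. Once (i)--(iii) are in place, the statistic correspondence $\overrightarrow{fr}=\overrightarrow{fix}$ is a direct consequence of the local rule at the columns $j=2i$, and the conjecture follows.
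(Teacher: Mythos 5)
Your reduction is exactly the paper's: Conjecture \ref{conjecturepistol} is presented there as an immediate corollary of Theorem \ref{theoremebijection}, i.e.\ of the existence of a bijection between $SP_{k-1}$ and $IS_k$ carrying $\overrightarrow{fix}$ to $\overrightarrow{fr}$, and restricting that bijection to each fiber of the vector statistic gives the equality of counts. Your sketch of the bijection itself --- building the $k$-boundary through intermediate \emph{partial $k$-shapes} by scanning the pistol from $j=2k-4$ down to $1$, with the condition $f(2i)=2i$ forcing $H_{k-i}(\lambda)\cap V_{i+1}(\lambda)$ to be empty --- is also the route the paper takes in Sections \ref{sec:preliminaires} and \ref{sec:proof}, so the approach is essentially the same.
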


\hspace*{-5.9mm} The main result of this paper is the following theorem, which implies immediately Conjecture \ref{conjecturepistol}.

\begin{theo} \label{theoremebijection}
There exists a bijection $\varphi : SP_{k-1} \rightarrow IS_k$ such that $\overrightarrow{fr}(\varphi(f)) = \overrightarrow{fix}(f)$ for all $f \in SP_{k-1}$.
\end{theo}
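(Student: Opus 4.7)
The plan is to exhibit explicit mutually inverse maps $\varphi : SP_{k-1} \to IS_k$ and $\phi : IS_k \to SP_{k-1}$ with the equality $\overrightarrow{fr}(\varphi(f)) = \overrightarrow{fix}(f)$ built into the construction. The construction will proceed inductively, building the target object one ``slice'' at a time; the auxiliary notion of a \emph{partial $k$-shape} (to be introduced in Section~\ref{sec:preliminaires}) is precisely what records the intermediate data of a half-built shape together with the constraints pending insertions must respect.

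For $\varphi$, the idea is to read a pistol $f \in SP_{k-1}$ as a sequence of instructions $f(2), f(4), \ldots, f(2k-2)$ describing where the dots of the even-indexed columns of the pistol tableau sit (the odd-indexed values are then recovered from the one-dot-per-column rule). I would process these values, say from $i=k-1$ down to $i=1$, and at step $i$ attach to the current partial $k$-shape a new column whose height $v$ and whose horizontal coordinate are prescribed by $f(2i)$: $f(2i)=2j$ should place a column in some set $H_u \cap V_v$ depending on $i$ and $j$. Along the way the ``partial'' status is updated so as to record the horizontal rows of the $k$-rim already committed. The key normalization is that the fixed point condition $f(2i)=2i$ corresponds to placing the dot on the top row of column $2i$ in the pistol tableau, which by the construction means that no cell is ever inserted into $H_{k-i}\cap V_{i+1}$; this is exactly the condition that $i$ becomes a free $k$-site of $\varphi(f)$.

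For $\phi$, the inverse direction, one reads the $k$-rim of $\lambda\in IS_k$ and, column by column (say from right to left), records the heights of the columns of $\partial^k(\lambda)$ and their position in the horizontal strips $H_u$. Each column contributes one value $f(2i)$ to the pistol, and the surjectivity of $f$ onto $\{2,4,\ldots,2(k-1)\}$ together with the pistol inequality $f(j)\geq j$ will both reflect global features of the $k$-rim: surjectivity comes from the $k$-shape condition $rs^k,cs^k\in$ Partitions, and $f(j)\geq j$ comes from the hook-length bound $u+v\leq k+1$ on $H_u\cap V_v$. The irreducibility bounds $|H_i(\lambda)\cap V_{k-i}(\lambda)|\leq i-1$ and $|H_i(\lambda)\cap V_{k+1-i}(\lambda)|\leq i-1$ (when measured as horizontal steps) translate, through this reading, into the condition that each column of the pistol tableau carries exactly one dot.

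The principal obstacle will be showing that $\varphi$ lands in $IS_k$: the $k$-shape condition, and especially the two irreducibility inequalities, are global properties that must be preserved through local insertions, and the partial $k$-shape formalism is what I would design precisely to make these constraints hereditary under the recursive step. Verifying that $\varphi\circ\phi$ and $\phi\circ\varphi$ are identities is a secondary but still delicate task; it should follow step by step once the two recursions are set up symmetrically. Once both maps are in place and correctly inverse, the statistic identity $\overrightarrow{fr}(\varphi(f))=\overrightarrow{fix}(f)$ is free, since the correspondence between the fixed-point condition $f(2i)=2i$ and the emptiness of $H_{k-i}\cap V_{i+1}$ is enforced at every step of the construction.
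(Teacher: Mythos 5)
Your overall architecture matches the paper's: two explicit inverse maps, a notion of partial $k$-shape recording a half-built boundary, column-by-column insertion driven by the values of $f$, and the identification of $f(2i)=2i$ with the emptiness of $H_{k-i}(\lambda)\cap V_{i+1}(\lambda)$. But as stated the construction of $\varphi$ cannot work, for a counting reason: a surjective pistol is \emph{not} determined by its even-indexed values, and the ``one-dot-per-column rule'' does not let you recover $f(1),f(3),\dots$ from $f(2),f(4),\dots$ (each column of the tableau carries its own dot independently). For $k-1=4$ there are at most $4\cdot 3\cdot 2\cdot 1=24$ admissible tuples $(f(2),f(4),f(6),f(8))$, while $|SP_4|=G_{10}=155$, so a map built only from the even values cannot be a bijection. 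The paper's algorithm processes \emph{all} $j$ from $2k-4$ down to $1$: even $j=2i$ contributes $z_j$ columns of height $i+1$ ``labeled $1$'' (hook lengths $\le k$, landing in $V_{i+1}$), while odd $j=2i-1$ contributes columns of height $i$ ``labeled $2$'' (hook lengths $\le k-1$, landing in $\bigsqcup_{u\le k-i}H_u\cap V_i$). This two-label structure is the actual content of the partial $k$-shape formalism, and it is what makes the odd-indexed data visible in $\partial^k(\lambda)$; your proposal omits it entirely.

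The second missing idea is the \emph{saturation} mechanism, which is where the real difficulty of invertibility lives. One cannot always set the number of inserted columns at step $j$ equal to $f(j)/2-\lceil j/2\rceil$: when $f(2i)>2i$ and $j$ is the minimal preimage of $2i$, the paper instead chooses $z_j$ as the unique value making the columns of height $i+1$ ``saturated'' (rooted in rows whose top-left cell has hook length exactly $k$), and proves (Lemma~\ref{saturation}) that such a value exists and is unique under an inductively maintained hypothesis. Without this override, the inverse map has no way to recover, from $\lambda$ alone, \emph{which} preimage of $2i$ was the minimal one, and the surjectivity of $\phi(\lambda)$ onto $\{2,4,\dots,2k-2\}$ fails for the values $2i$ with $z_{2i}(\lambda)>0$; this is exactly the role of the ``chained'' sites $j_p(\lambda)$ and of Lemma~\ref{equivalence} in the paper. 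Relatedly, your claims that $f(j)\ge j$ ``comes from the hook-length bound $u+v\le k+1$'' and that surjectivity ``comes from $rs^k,cs^k$ being partitions'' understate the work needed: the bound $z_{2i-1}(\lambda)\le k-1-i$ on the odd slices (Lemma~\ref{kshapeirreductiblecondition}) is a genuinely nontrivial consequence of irreducibility, not of the hook-length bound alone. So the proposal identifies the right skeleton but is missing the two ideas (the two-label encoding of odd versus even arguments, and saturation) on which the bijection actually turns.
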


\hspace*{-5.9mm} We intend to demonstrate Theorem \ref{theoremebijection} in the following two sections \S \ref{sec:preliminaires} and \S \ref{sec:proof}.

\section{Partial $k$-shapes}
\label{sec:preliminaires}

\begin{defi}[Labeled skew partitions, partial $k$-shapes and saturation property]
A \textit{labeled skew partition} is a skew partition $s$ whose columns are labeled by the integer $1$ or $2$. If $cs(s)$ is a partition and if the hook length of every cell of $s$ doesn't exceed $k$ (resp. $k-1$) when the cell is located in a column labeled by $1$ (resp. by $2$), we say that $s$ is a \textit{partial $k$-shape}. In that case, if $C_0$ is a column labeled by $1$ which is rooted in a row $R_0$ (\textit{i.e.}, whose bottom cell is located in $R_0$) whose top left cell is hook lengthed by $k$, we say that $C_0$ is saturated. For all $i \in [k-1]$, if every column of height $i+1$ and label $1$ is saturated in $s$, we say that $s$ is saturated in $i$. If $s$ is saturated in $i$ for all $i$, we say that $s$ is saturated.
\end{defi}

\hspace*{-5.9mm} We represent labeled skew partitions by painting in dark blue columns labeled by $1$, and in light blue columns labeled by $2$. For example, the skew partition depicted in Figure \ref{exemplelabel} is a partial $6$-shape, which is not saturated because its unique column $C$ labeled by $1$ is rooted in a row whose top left cell (which is in this exemple the own bottom cell of $C$) is hook lengthed by $5$ instead of $6$.
\begin{figure}[!h]
\centering
\begin{minipage}{.35\textwidth}
  \centering
\includegraphics[width=1.5cm]{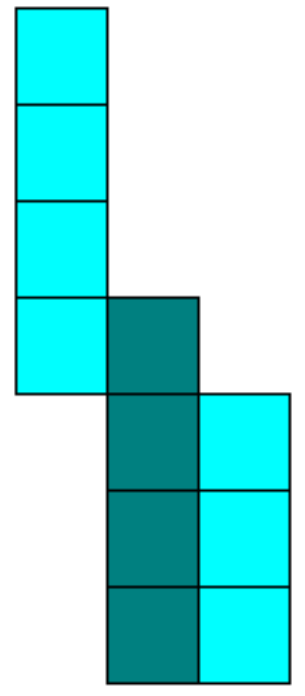}
\caption{Partial $6$-shape $s$.}
\label{exemplelabel}
\end{minipage}%
\begin{minipage}{.35\textwidth}
  \centering
\includegraphics[width=2.3cm]{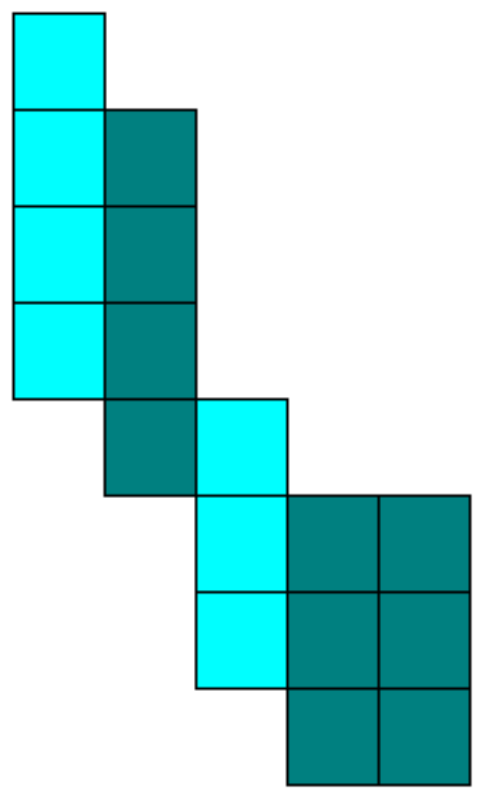}
\caption{Partial $6$-shape $s \oplus^6_1 3^2$.}
\label{exemplelabel2}
\end{minipage}%
\begin{minipage}{.35\textwidth}
  \centering
\includegraphics[width=5cm]{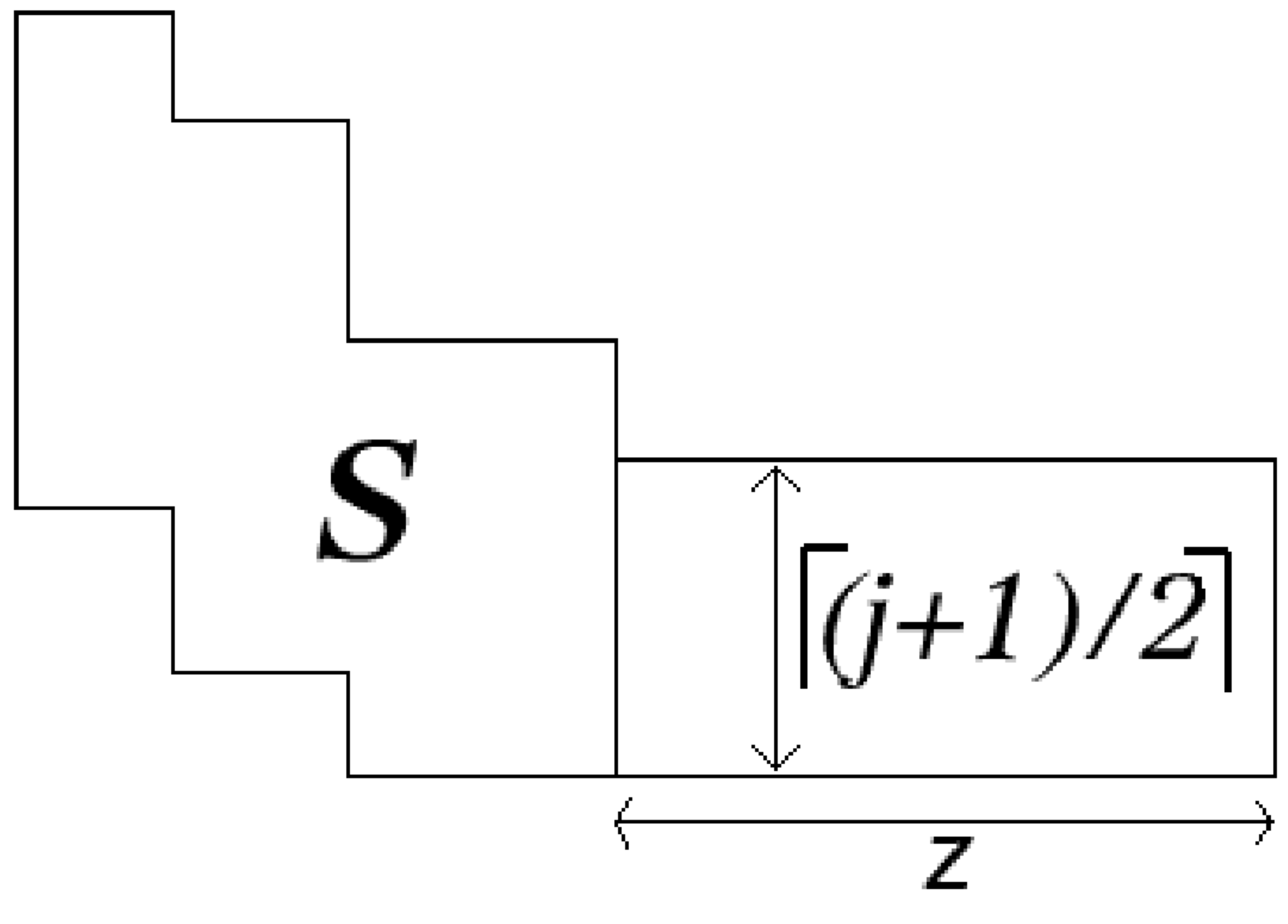}
\caption{Gluing of the rectangle $\lceil (j+1)/2 \rceil^{z_j}$ to $s$.}
\label{exemplegluing}
\end{minipage}%
\end{figure}

\begin{defi}[Sum of partial $k$-shapes with rectangles]
\label{sumskew}
Let $s$ be a partial $k$-shape, and $j \geq 1$ such that the height of every column of $s$ is at least $\lceil (j+2)/2 \rceil$ (if $s$ is the empty skew partition we impose no condition on $j$). Let $z$ be a nonnegative integer and $t(j)$ the integer defined as $1$ if $j$ is even and $2$ if $j$ is odd. We consider the labeled skew partition $\tilde{s}$ obtained by gluing right on the last column of $s$, the amount of $z$ columns of height $\lceil (j+1)/2 \rceil$ (see Figure \ref{exemplegluing}) labeled by the integer $t(j)$.
We apply the following algorithm on $\tilde{s}$ as long as one of the three corresponding conditions is satisfied.
\begin{enumerate}
\item If there exists a column $C_0$ labeled by $1$ (respectively by $2$) in $\tilde{s}$ such that the bottom cell $c_0$ of $C_0$ is a corner of $\tilde{s}$ (a cell of $ \tilde{s}$ with no other cell beneath it or on the left of it)  whose hook length $h$ exceeds $k$ (resp. $k-1$), then we \textit{lift} the column $C_0$, \textit{i.e.}, we erase $c_0$ and we draw a cell on the top of $C_0$ (see Figure \ref{exemplecrossing}).
\item If there exists a column $C_0$ of height $i_0+1$ (with $i_0 \in [k-2]$) and labeled by $1$ in $\tilde{s}$, such that the bottom cell $c_0$ of $C_0$ is on on the right of the bottom cell of a column whose height is not $i_0+1$ or whose label is not $1$, then we lift every column on the left of $C_0$ whose bottom cell is located in the same row as $c_0$, \textit{i.e.}, we erase every cell on the left of $c_0$ and we draw a cell on every corresponding column (see Figure \ref{exempleliftcorner}).
\item If there exists a column $C_0$ of height $i_0+1$ (with $i_0 \in [k-2]$) and labeled by $1$ in $\tilde{s}$, such that the bottom cell $c_0$ of $C_0$ is a corner whose hook length $h$ doesn't equal $k$ (which means it is rooted in a row $R_0$ of $\tilde{s}$ whose length is $k-i_0 - l < k-i_0$ for some $l \geq 1$), whereas this hook length was exactly $k$ in the partial $k$-shape $s$, then we lift every column rooted in the same row as the $l$ last columns (from left to right) intersecting $R_0$, in such a way the hook length of $c_0$ becomes $k$ again in $\tilde{s}$ (see Figure \ref{exempleliftconservation}).
\end{enumerate}
\begin{figure}[!h]
\centering
\begin{minipage}{.3\textwidth}
  \centering
\includegraphics[width=3cm]{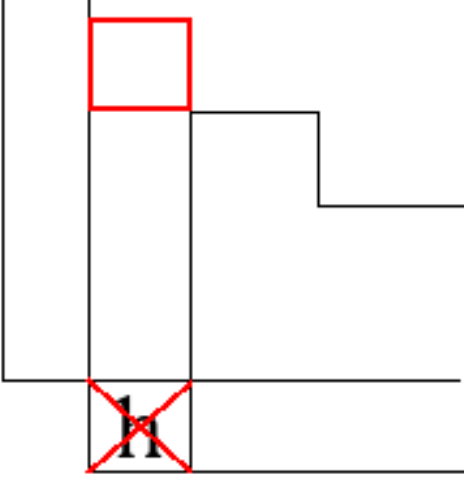}
\caption{}
\label{exemplecrossing}
\end{minipage}%
\begin{minipage}{.3\textwidth}
  \centering
\includegraphics[width=3.5cm]{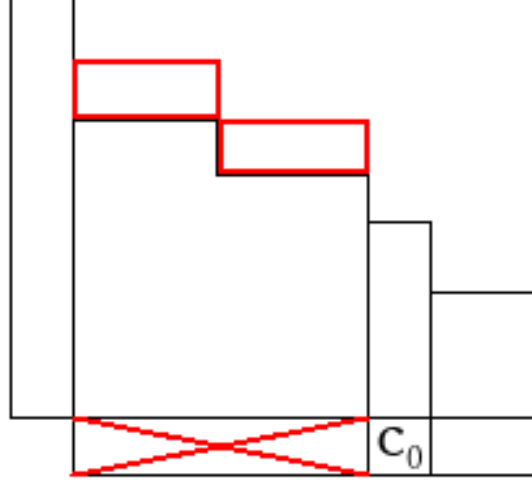}
\caption{}
\label{exempleliftcorner}
\end{minipage}%
\begin{minipage}{.3\textwidth}
  \centering
\includegraphics[width=3.5cm]{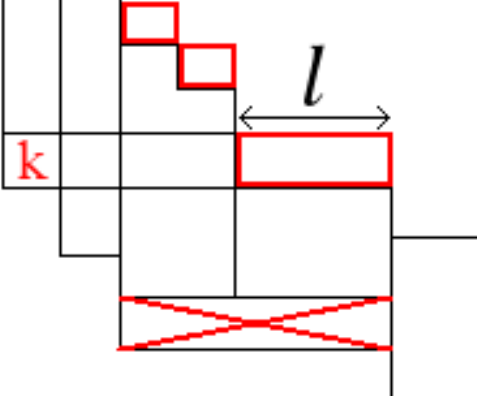}
\caption{}
\label{exempleliftconservation}
\end{minipage}%
\end{figure}
It is easy to see that this algorithm is finite and that the final version of $\tilde{s}$ is a partial $k$-shape, that we define as the $t(j)$-sum of the partial $k$-shape $s$ with the rectangle $\lceil (j+1)/2 \rceil^{z}$ (the partition whose Ferrers diagram is a rectangle of length $z$ and height $\lceil (j+1)/2 \rceil$), and which we denote by
$$s \oplus^k_{t(j)} \lceil (j+1)/2 \rceil^{z}.$$
\end{defi}

\hspace*{-5.9mm} For example, the $1$-sum $s \oplus_1^6 3^2$ of the partial $6$-shape $s$ represented in Figure \ref{exemplelabel}, with the rectangle composed of $2$ columns of height $3$ and label $1$, is the partial $6$-shape depicted in Figure \ref{exemplelabel2}.

\begin{rem} \label{remarquesaturation}
In the context of Definition \ref{sumskew}, the rule $(3)$ of the latter definition guarantees that any saturated column of $s$ is still saturated in $s \oplus_{t(j)}^k \lceil (j+1)/2 \rceil^z$. In particular, if $s$ is saturated in $i \in [k-2]$, then $s \oplus_{t(j)}^k \lceil (j+1)/2 \rceil^z$ is also saturated in $i$.
 \end{rem}
 
 \begin{lem} \label{detaillifting1} 
Let $s$ be a partial $k$-shape, let $j \in [2k-4]$ such that every column of $s$ is at least $\lceil (j+2)/2 \rceil$ cells high, and let $z \in \{0,1,\hdots,k-1-\lceil j/2 \rceil \}$. We consider two consecutive columns (from left to right) of $s$, which we denote by $C_1$ and $C_2$, with the same height and the same label but not the same level, and such that $C_1$ has been lifted in the context $(1)$ of Definition \ref{sumskew} (note that it cannot be in the context $(2)$). If $C_2$ has been lifted at the same level as $C_1$ in $s \oplus_{t(j)}^k \lceil (j+1)/2 \rceil^z$, then it is not in the context $(1)$ of Definition \ref{sumskew}.
\end{lem}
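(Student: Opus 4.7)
The plan is to argue by contradiction: assume that the lift of $C_2$ bringing its bottom to the same level as $C_1$ is performed in context $(1)$, and then compare the hook lengths of the two bottom cells just before their respective lifts in order to derive a contradiction with the threshold required by context $(1)$.

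Let $x$ denote the column index of $C_1$, so that $C_2$ occupies column $x+1$. Let $h$ be their common height and $\ell$ the common row of their bottom cells in $s \oplus^k_{t(j)} \lceil (j+1)/2 \rceil^z$. I would first study the instant at which $C_1$ is lifted in context $(1)$: at that moment the bottom cell of $C_1$ sits at position $(\ell-1,x)$, is a corner, and has hook length $h_1 > k$ (resp.\ $h_1 > k-1$) if the label is $1$ (resp.\ $2$). The corner condition forces row $\ell-1$ of the current diagram to contain no cell at columns strictly less than $x$, so that the row length from column $x$ rightward equals exactly $h_1 - h + 1$. Right after $C_1$ is lifted, the cell $(\ell-1,x)$ disappears and the row length in row $\ell-1$ starting at column $x+1$ drops to $h_1 - h$.

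Next, I would consider the hypothetical moment at which $C_2$ is lifted in context $(1)$ so as to reach row $\ell$. Just before that lift, the bottom of $C_2$ lies at $(\ell-1,x+1)$, and its hook length $h_2$ equals the row length from column $x+1$ rightward in row $\ell-1$ plus $h-1$. If this row length is at most $h_1 - h$, then $h_2 \le h_1 - 1 \le k$ (resp.\ $\le k-1$), contradicting the threshold $h_2 > k$ (resp.\ $>k-1$) required by context $(1)$ for $C_2$.

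The main obstacle, then, is to justify that the row length from column $x+1$ rightward in row $\ell-1$ cannot grow between the lift of $C_1$ and the hypothetical lift of $C_2$. Any intermediate operation is itself a lifting, and the only way such a lifting could add a new cell at row $\ell-1$ in some column of index $\geq x+1$ is by lifting a column whose top previously sat at row $\ell-2$. I would rule this out through a short case analysis across the three contexts of Definition~\ref{sumskew}, exploiting the adjacency of $C_1$ and $C_2$ together with their shared height and label, and showing that no such operation can be triggered in the window between the two lifts without either contradicting the corner property of $C_1$ established above, or violating the invariant that $cs$ remains a partition throughout the algorithm. Once this monotonicity is proved, the hook-length comparison closes the argument.
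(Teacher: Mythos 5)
Your skeleton is the right one and matches the paper's: reduce to showing that, when the bottom cell of $C_2$ reaches the row just below $C_1$'s final level, its hook length cannot exceed the context-$(1)$ threshold because that row cannot have gained cells to the right of $C_2$. The genuine gap is exactly at the step you yourself flag as ``the main obstacle'': you propose to prove the no-growth claim by ``a short case analysis across the three contexts, exploiting the adjacency of $C_1$ and $C_2$\dots or violating the invariant that $cs$ remains a partition''. No such local argument closes the case. The scenario in which the row does gain a cell --- the column $C_4$ sitting just right of the last column $C_3$ meeting that row being lifted up to the level of $C_3$ --- is locally consistent with adjacency, with the corner property of $C_1$, and with $cs$ staying a partition. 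The paper kills it with a \emph{quantitative} argument absent from your sketch: such a lift of $C_4$ would have to move it by at least $\lceil (j+2)/2\rceil$ cells (every column of $s$ being at least that high), whereas during the computation of $s\oplus_{t(j)}^{k}\lceil (j+1)/2\rceil^{z}$ no column of $s$ can rise by more than $\lceil (j+1)/2\rceil$ cells; the only borderline case is $j=2p$, and there rule $(3)$ of Definition \ref{sumskew} caps the context-$(1)$ lifts at $p<p+1=\lceil (j+2)/2\rceil$ cells. Without this counting step the lemma is not proved, and I do not see how the local tools you list could substitute for it.

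A secondary problem: the chain $h_2\le h_1-1\le k$ silently assumes that at the instant $C_1$ was lifted its hook length was exactly $k+1$ (resp.\ $k$). Since the $z$ columns of a rectangle are glued simultaneously and a context-$(2)$ or $(3)$ step can lift several columns at once, a corner's hook length can jump past the threshold by more than one, so $h_1\le k+1$ needs justification; the safer route (the one the paper effectively takes) is to bound the relevant hook length using the fact that in the \emph{finished} partial $k$-shape $s$ every cell already satisfies the bound $\le k$ (resp.\ $\le k-1$), so that the context-$(1)$ condition for $C_2$ can only be met if the row strictly gains cells during the current $\oplus$ operation.
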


\begin{proof}
Let $R_1$ (resp. $R_2$) be the row in which $C_1$ (resp. $C_2$) is rooted, and let $R$ be the row beneath $R_1$. Let $l$ be the length of $R$. Since $C_1$ and $C_2$ have the same height and the same label, and since $C_1$ has been lifted in the context $(1)$ of Definition \ref{sumskew}, then it is necessary that the length of $R_2$ equals $l$ as well. Consequently, the partial $k$-shape $s$ is like depicted in Figure~\ref{figuredetail1}. We also consider the last column $C_3$ to be rooted in $R_2$, and the column $C_4$ on the right of $C_3$.
\begin{figure}[!h]
\centering
\includegraphics[width=5cm]{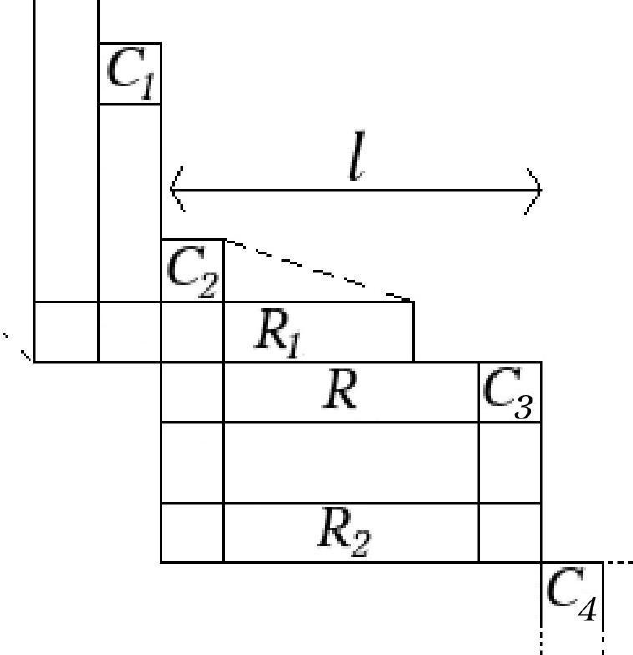}
\caption{Partial $k$-shape $s$.}
\label{figuredetail1}
\end{figure}
\\ 
Now, suppose that, in $s \oplus_{t(j)}^k \lceil (j+1)/2 \rceil^z$, the column $C_2$ has been lifed at the same level as $C_1$ in the context $(1)$ of Definition \ref{sumskew}. To do so, it is necessary that the row $R$ gains cells between $s$ and $s \oplus_{t(j)}^k \lceil (j+1)/2 \rceil^z$, \textit{i.e.}, that the column $C_4$ is lifted at the same level as $C_3$. By hypothesis, it means that $C_4$ must be lifted down to at least $\lceil (j+2)/2 \rceil$ cells between $s$ and $s \oplus_{t(j)}^k \lceil (j+1)/2 \rceil^z$. Obviously, every column of $s \oplus_{t(j)}^k \lceil (j+1)/2 \rceil^z$ has been lifted up to at most $\lceil (j+1)/2 \rceil$ cells, so $\lceil (j+1)/2 \rceil = \lceil (j+2)/2 \rceil$, \textit{i.e.}, there exists $p \in [k-2]$ such that $j = 2p$. Consequently, the partial $k$-shape $s \oplus_{t(j)}^k \lceil (j+1)/2 \rceil^z$ is obtained by adding $z$ columns of height $p+1$ and label $1$ to $s$. However, according to the rule $(3)$ of Definition \ref{sumskew}, only the $p$ top cells of those $z$ columns may lift the columns of $s$ in the context $(1)$, \textit{i.e.}, the columns of $s$ are lifted up to at most $p$ cells in this context, thence $C_4$ cannot be lifted at the same level as $C_3$, which is absurd.
\end{proof}
 
\begin{rem} \label{explicitation3}
Here, we give precisions about context $(3)$ of Definition \ref{sumskew}. Using the same notations, consider the column $C_1$ of $s$ which contains the last cell (from left to right) of $R_0$, and $C_2$ the column which follows $C_1$ (see Figure \ref{figureexplicitation31}).
\begin{figure}[!h]

\centering
\begin{minipage}{.5\textwidth}
  \centering
\includegraphics[width=6cm]{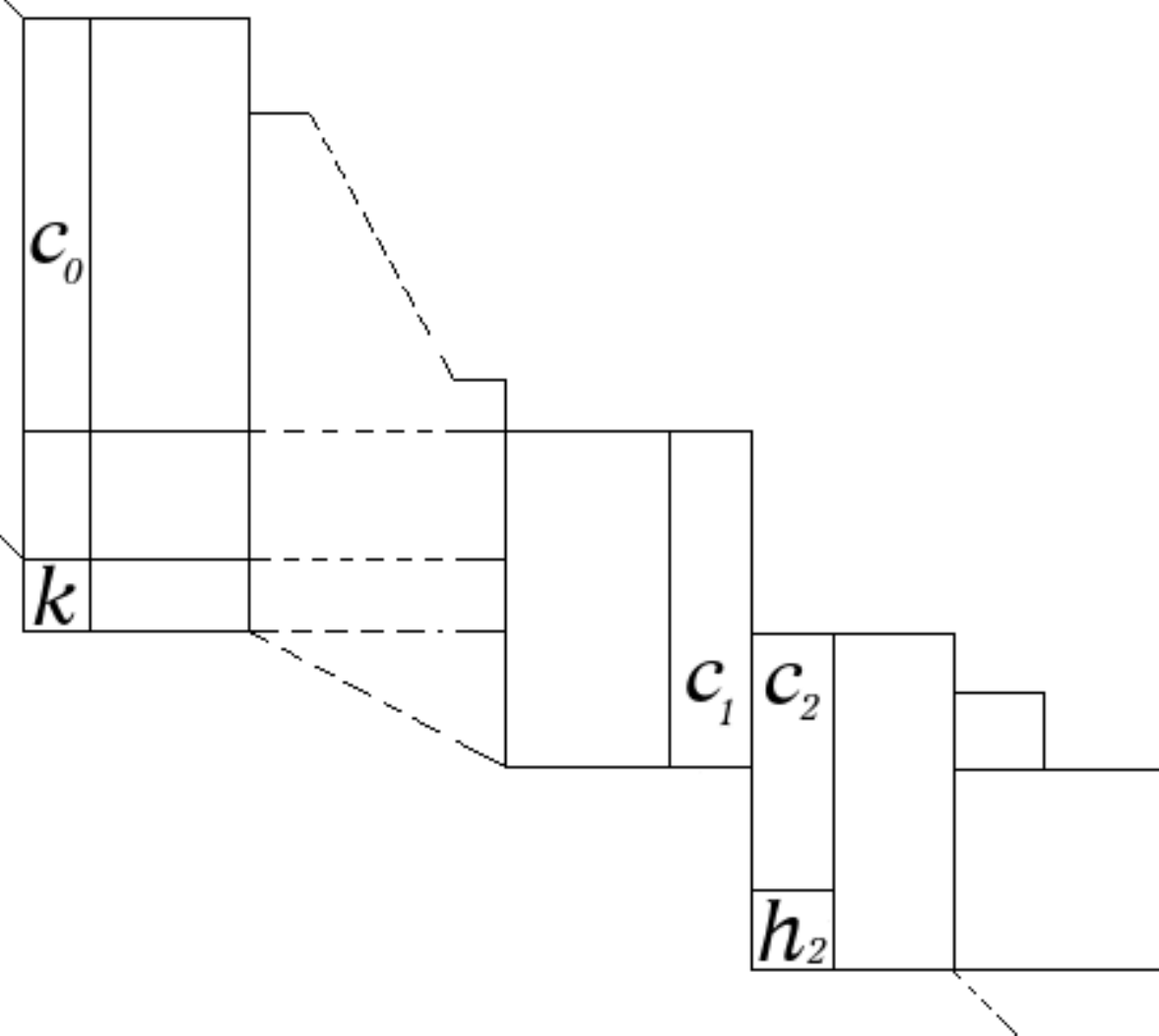}
\caption{Partial $k$-shape $s$.}
\label{figureexplicitation31}
\end{minipage}%
\begin{minipage}{.5\textwidth}
  \centering
\includegraphics[width=6cm]{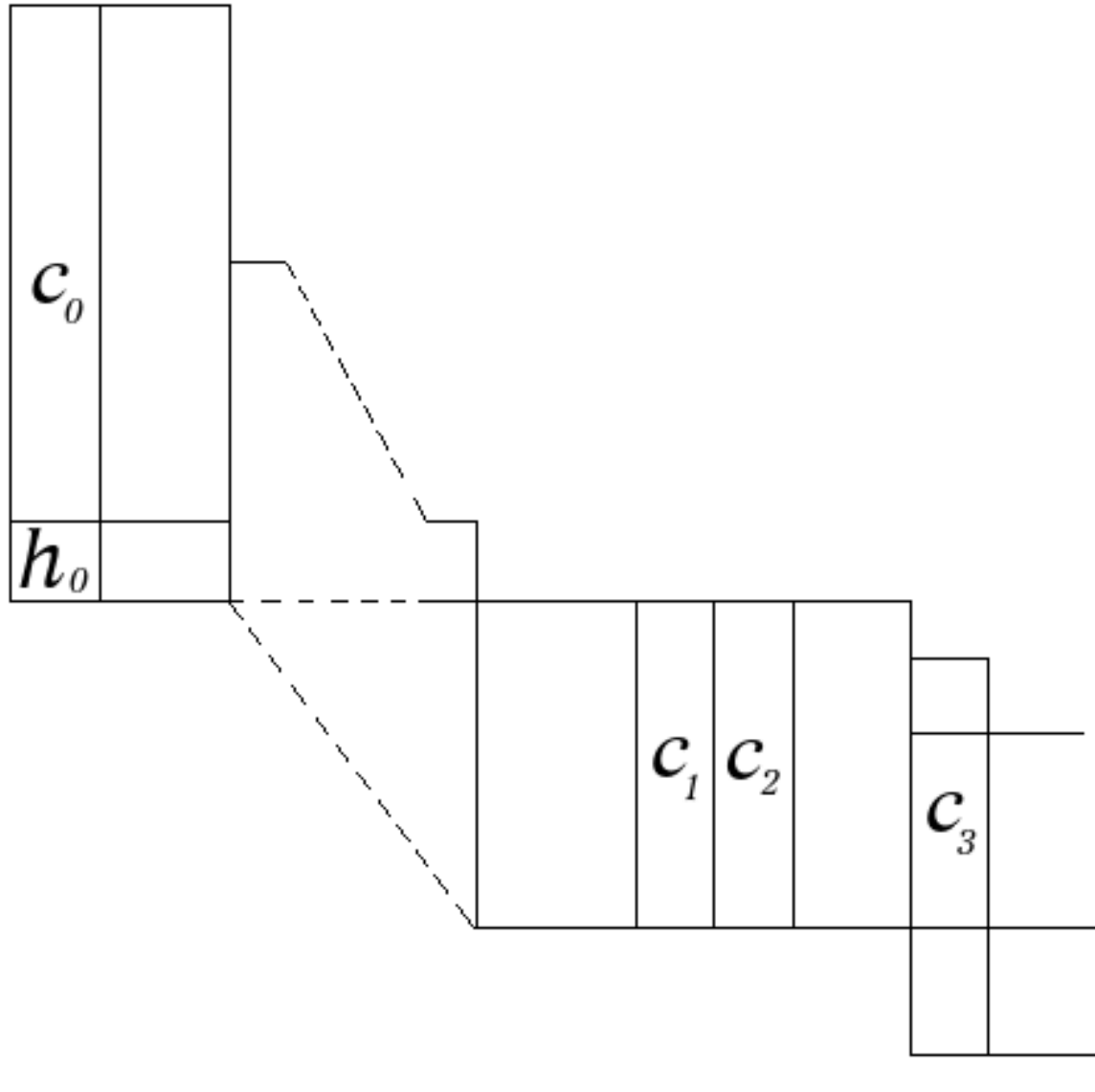}
\caption{Between $s$ and $s \oplus_{t(j)}^k \lceil (j+1)/2 \rceil^{z}$.}
\label{figureexplicitation32}
\end{minipage}%
\end{figure}
Since $C_0$ loses (momentarily) its saturation during the computation of $s \oplus_{t(j)}^k \lceil (j+1)/2 \rceil^{z}$, it is necessary that the columns $C_1$ and $C_2$ have the same height and the same label in order to obtain the situation depicted in Figure \ref{figureexplicitation32}.
Consequently, the lifting of $C_1$ in Figure \ref{figureexplicitation31} comes from rule $(1)$ of Definition \ref{sumskew} (it cannot be prompted by rule $(3)$ because $C_0$ has not lost its saturation yet). Also, if the label of $C_1$ and $C_2$ is $1$, then the hook length $h_2$ of the bottom cell $c_2$ of $C_2$ equals $k$ in Figure \ref{figureexplicitation32} (because $C_1$ has been lifted in the context $(1)$), implying the situation depicted in Figure \ref{figureexplicitation32} cannot be reached because, as noticed in Remark \ref{remarquesaturation}, the hook length $h_2$ of $c_2$ still equals $k$ in Figure \ref{figureexplicitation32}, forcing $C_1$ to be lifted by the rule $(1)$. So, the label of $C_1$ and $C_2$ must be $2$, and $h_2 = k-1$. \\
Finally, according to Lemma \ref{detaillifting1}, the lifting of $C_2$ in Figure \ref{figureexplicitation32} must be done in the context $(2)$ of Definition \ref{sumskew}: indeed, if it was context $(3)$, there would exist a column $C'_0$ labeled by $1$ between $C_0$ and $C_2$, which would be lifted so that its bottom cell ends up in the row $R_0$ (in order for $C_2$ to be lifted at the same level). But then, since $C_0$ has not lost its saturation yet at this time (because $C_2$ has not been lifted yet), the column $C'_0$ would be rooted in $R_0$ thus would be saturated, which is absurd because by hypothesis $C'_0$ is supposed to lose momentarily its saturation. Thus, the column $C_3$ depicted in Figure \ref{figureexplicitation32} is labeled by $1$.
\end{rem}
 
\begin{lem} \label{saturation}
Let $s$ be a partial $k$-shape and $j \geq 1$ such that the height of every column of $s$ is at least $\lceil (j+2)/2 \rceil$, and such that the quantity of integers $i \in [k-2]$ in which $s$ is not saturated is at most $\lceil j/2 \rceil$. Then, if $s$ is not saturated in $i_0 \in [k-2]$, there exists a unique integer $z \in [k-1-\lceil j/2 \rceil]$ such that the partial $k$-shape $s \oplus^k_{t(j)} \lceil (j+1)/2 \rceil^{z}$ is saturated in $i_0$.
\end{lem}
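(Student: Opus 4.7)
The plan is to locate a column of $s$ that witnesses the non-saturation at $i_0$ and to exhibit a unique value of $z$ whose addition restores the missing hook length in the corresponding row.

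First, I would pick an unsaturated column $C_0$ of height $i_0 + 1$ and label $1$ in $s$. Let $R_0$ be the row of $s$ in which $C_0$ is rooted, and let $h_0 < k$ denote the hook length of the top-left cell of $R_0$. Writing $h_0 = k - \ell$ with $\ell \geq 1$, the problem reduces to forcing exactly $\ell$ extra cells to appear on the left of $R_0$ after the sum operation. I would first reduce to a single column: because $cs(s)$ is a partition and because all label-$1$ columns of a given height sit in a block that is processed in parallel by the algorithm, saturating one such column will saturate every other unsaturated column of height $i_0+1$ and label $1$ simultaneously.

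Next, I would study the map $z \mapsto \delta(z)$, where $\delta(z)$ is the difference between the length of the row containing $C_0$ in $s \oplus^k_{t(j)} \lceil (j+1)/2 \rceil^{z}$ and its length in $s$. Using Remark~\ref{remarquesaturation} together with the three lifting rules of Definition~\ref{sumskew}, I would show that $\delta$ is non-decreasing in $z$ and that its successive values differ by at most $1$, so that every integer between $0$ and $\max_z \delta(z)$ is attained. The combination of the hypothesis that at most $\lceil j/2 \rceil$ levels of $s$ are unsaturated with the admissible range $z \in [k - 1 - \lceil j/2 \rceil]$ then provides exactly the room needed for $\delta(z) = \ell$ to have a solution inside the prescribed interval.

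Uniqueness follows from the fact that at the correct value of $z$ the top-left cell of $R_0$ has hook length precisely $k$; incrementing $z$ would add one more cell to the hook, producing a hook length exceeding $k$ at a column labeled $1$, which contradicts the partial-$k$-shape condition. The main obstacle will be controlling the interaction between the three lifting rules of Definition~\ref{sumskew}, which may trigger chain reactions lifting many columns at once. For this, Lemma~\ref{detaillifting1} and Remark~\ref{explicitation3} are the essential tools: they rule out the pathological configurations in which two distinct values of $z$ could saturate $C_0$ at the same row, and they ensure that the increments of $\delta$ really are unit-size, preserving both the existence and the uniqueness of the desired $z$.
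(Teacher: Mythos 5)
Your overall picture --- reduce saturation in $i_0$ to the hook length of the bottom cell of the leftmost unsaturated column of height $i_0+1$ and label $1$, and track how that hook length evolves as $z$ grows --- is the right one and matches the paper's. But both of your key steps have genuine gaps. For existence, saying that the hypotheses ``provide exactly the room needed for $\delta(z)=\ell$ to have a solution'' is an assertion, not an argument: nothing in your proposal shows that the relevant row actually gains $\ell$ cells before $z$ leaves the interval $[k-1-\lceil j/2\rceil]$. The paper obtains this from two specific ingredients you do not supply: (i) an endpoint computation showing that for $z=k-\lceil j/2\rceil$ the entire rectangle ends up in the bottom right corner of $s$, because the height hypothesis forces every column of $s$ whose bottom cell is level with the rectangle to acquire a corner of hook length at least $\lceil (j+2)/2\rceil + k - \lceil j/2\rceil \geq k+1$ and hence to be lifted the full $\lceil (j+1)/2\rceil$ times; and (ii) a counting argument showing that the leftmost unsaturated column $C_1$ cannot undergo all $\lceil (j+1)/2\rceil$ of these lifts in context $(2)$ of Definition \ref{sumskew}, since each such lift is triggered by a distinct non-saturated label-$1$ column of a different height, and the hypothesis caps the number of unsaturated heights at $\lceil j/2\rceil$. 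Only then does one know that $C_1$ is lifted at least once in context $(1)$, and the step just before the first such lift is the saturating value of $z$.

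For uniqueness, your argument rests on a false premise: incrementing $z$ past the saturating value does not ``produce a hook length exceeding $k$'' in contradiction with the partial-$k$-shape condition --- rule $(1)$ of Definition \ref{sumskew} exists precisely to handle that situation by lifting the column, and the result is a perfectly valid partial $k$-shape in which $C_1$ sits one row higher and is no longer saturated. So there is no contradiction to extract; what must be shown is that before the saturating step the hook length is strictly less than $k$ and that after the ensuing lift $C_1$ cannot reach hook length $k$ again for any admissible larger $z$. Likewise, your claim that $\delta$ is non-decreasing with unit increments is neither proved nor obviously true: a newly glued column need not reach the row of $C_1$, and liftings in contexts $(2)$ and $(3)$ can move several columns of $s$ at once. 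I would rebuild the proof around the two ingredients above rather than around the function $\delta$.
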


\begin{proof}
According to Remark \ref{explicitation3}, columns labeled by $1$ cannot be lifted in the context $(3)$ of Definition \ref{sumskew}. Consequently, in the partial $k$-shape $s$, the columns of height $i_0+1$ and label $1$ are organized in $m \geq 1$ groups of columns rooted in a same row, such that the $m-1$ first groups from right to left are made of saturated columns, \textit{i.e.}, such that the columns of these groups are rooted in rows whose top left cell is hook lengthed by $k$, and such that the $m$-th group is made of non-saturated columns, \textit{i.e.}, such that the columns $C_1,C_2,\hdots,C_q$ of this group (from left to right) are rooted in a row whose top left cell (which is the bottom cell $c_1$ of $C_1$) is hook lengthed  by some integer $h<k$ (see Figure \ref{exemplepropagation1}).
\begin{figure}[!h]
\centering
\begin{minipage}{.5\textwidth}
  \centering
\includegraphics[width=3.8cm]{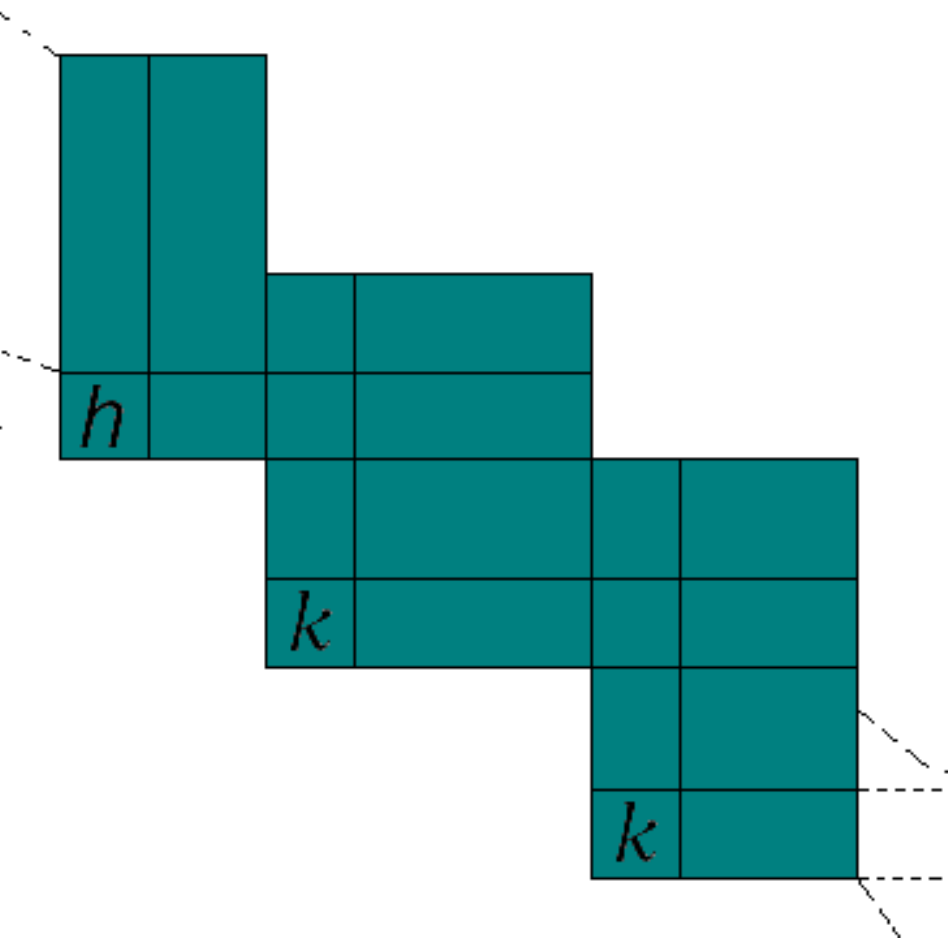}
\caption{Partial $k$-shape $s$.}
\label{exemplepropagation1}
\end{minipage}%
\begin{minipage}{.5\textwidth}
  \centering
\includegraphics[width=5cm]{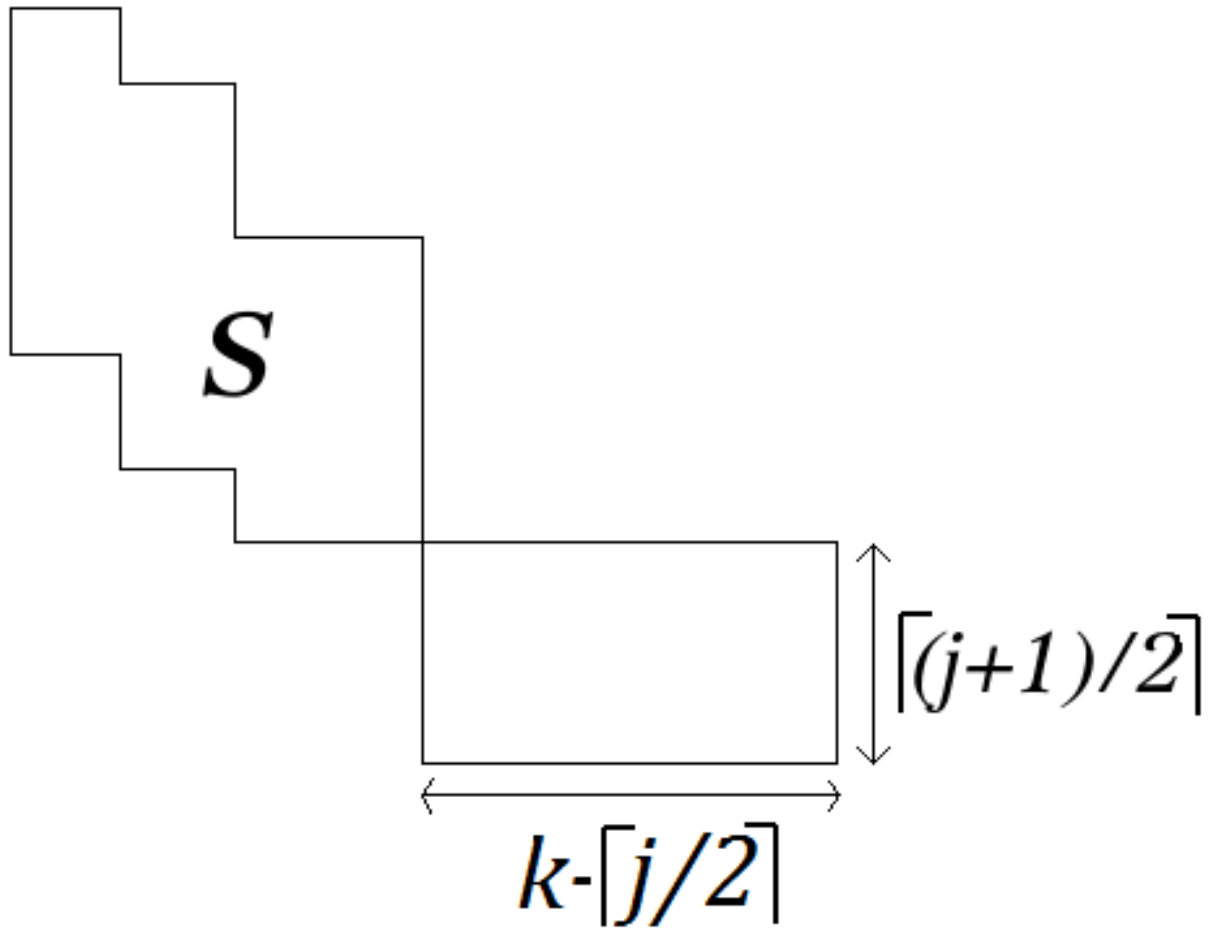}
\caption{Partial $k$-shape $s^{k-\lceil j/2 \rceil}$.}
\label{exemplepropagation2}
\end{minipage}
\end{figure}
\\
Now, for all $p \in [k-\lceil j/2 \rceil]$, let $s^p$ be the partial $k$-shape $s \oplus^k_{t(j)} \lceil (j+1)/2 \rceil^p$. For all $p \geq 2$, the partial $k$-shape $s^p$ is obtained by gluing a column of height $\lceil (j+1)/2 \rceil$ and label $t(j)$ right next to the last column of $s^{p-1}$, then by applying the 3 rules of Definition \ref{sumskew} so as to obtain a partial $k$-shape. We now focus on $s^{k-\lceil j/2 \rceil}$. Let $C$ be a column of $s$. Since the height of $C$ is at least $\lceil (j+2)/2 \rceil$, if the bottom cell $c$ of $C$ is located in the same row as one of the cells of the rectangle $\lceil (j+1)/2 \rceil^{k-\lceil j/2 \rceil}$ during the computation of $s^{k-\lceil j/2 \rceil}$, then the hook length of $c$ will be at least $\lceil (j+2)/2 \rceil + k - \lceil j/2 \rceil \geq k+1$. Consequently, according to the rule $(1)$ of Definition \ref{sumskew}, the column $C$ is lifted as long as its bottom cell is in the same row as one of the cell of the rectangle $\lceil (j+1)/2\rceil^{k-\lceil j/2 \rceil}$, and since this holds for every column $C$ of $s$, then the partial $k$-shape $s^{k-\lceil j/2 \rceil}$ is obtained by drawing the rectangle $\lceil (j+1)/2 \rceil^{k-\lceil j/2 \rceil}$ in the bottom right hand corner of $s$ (see Figure \ref{exemplepropagation2}). In particular, the columns $C_1,C_2,\hdots,C_q$ must have been been lifted $\lceil (j+1)/2 \rceil$ times between $s$ and $s^{k-\lceil j/2 \rceil}$. The idea is to prove there exists a unique $p_0 \in [k-1-\lceil j/2 \rceil]$ such that $C_1$ has been lifted in the context $(1)$ of Definition \ref{sumskew} in $s^{p_0+1}$, implying the hook length $h$ of $c_1$ equals $k$ in $s^{p_0}$, which means $s^{p_0}$ is saturated in $i_0$ according to Remark \ref{remarquesaturation}. Note that the columns $C_1,C_2,\hdots,C_q$ cannot be lifted in the context $(3)$ of Definition \ref{sumskew} because their label is $1$ (see Remark \ref{explicitation3}). If $m \geq 2$, the columns $C_1,C_2,\hdots,C_q$ cannot be lifted in the context $(2)$, hence they are lifted in the context $(1)$, so the existence and unicity of the integer $p_0$ is obvious. If $m = 1$, suppose $C_1$ is never lifted in the context $(1)$, \textit{i.e.}, that it is lifted $\lceil (j+1)/2 \rceil$ times in the context $(2)$ between $s$ and $s^{k-\lceil j/2 \rceil}$. Each of these $\lceil (j+1)/2 \rceil$ times, the first column labeled by $1$ (from right to left) prompting the chain reaction of liftings in the context $(2)$ (which leads to the lifting of $C_1$), must be different from the others. Also, the columns labeled by $1$ responsible for these liftings cannot be saturated, because from Remark \ref{remarquesaturation} they would still be saturated when glued to the columns of different height or label that are lifted, meaning their bottom cell would be hooked lengthed by $k$ and that the lifting would be in the context $(1)$, which is absurd. As a conclusion, it is necessary that in addition to $i_0$, there would exist $\lceil (j+1)/2 \rceil \geq \lceil j/2 \rceil$ different integers $i < i_0$ such that $s$ is not saturated in $i$, which is absurd by hypothesis.
\end{proof}

\section{Proof of Theorem \ref{theoremebijection}}
\label{sec:proof}

We first construct two key algorithms in the first two subsections.

\subsection{Algorithm $\varphi : SP_{k-1} \rightarrow IS_k$}
\label{sec:varphi}

\begin{defi}[Algorithm $\varphi$] \label{definitionvarphi}
Let $f \in SP_{k-1}$. We define $s^{2k-3}(f)$ as the empty skew partition. For $j$ from $2k-4$ downto $1$, let $i \in [k-1]$ such that $f(j) = 2i$, and suppose that the hypothesis $H(j+1)$ defined as "if $s^{j+1}(f)$ is not empty, the height of every column of $s^{j+1}(f)$ is at least $\lceil (j+2)/2 \rceil$, and the number of integers $i$ in which $s^{j+1}(f)$ is not saturated is at most $\lceil j/2 \rceil$" is true (in particular $H(2k-3)$ is true so we can initiate the algorithm).
\begin{enumerate}
\item If $f(2i) > 2i$, if $ j = \min \{j' \in [2k-4],f(j') = 2i \}$ and if the partial $k$-shape $s^{j+1}$ is not saturated in $i$, then we define $s^j(f)$ as $s^{j+1}(f) \oplus_{t(j)}^k \lceil (j+1)/2 \rceil^{z_j(f)}$ where $z_j(f)$ is the unique element of $[k-1-\lceil j/2 \rceil]$ such that $s^{j+1}(f) \oplus_{t(j)}^k \lceil (j+1)/2 \rceil^{z_j(f)}$ is saturated in $i$ (see Lemma \ref{saturation} in view of Hypothesis $H(j+1)$).
\item Else, we define $s^j(f)$ as $s^{j+1}(f) \oplus_{t(j)}^k \lceil (j+1)/2 \rceil^{z_j(f)}$ where $f(j) = 2 (\lceil j/2 \rceil + z_j(f))$ (notice that $z_j(f) \in \{0,1,\hdots,k-1-\lceil j/2 \rceil \}$ by definition of a surjective pistol).
\end{enumerate}
In either case, if $s^j(f)$ is not empty, then the height of every column is at least $\lceil (j+1)/2 \rceil$. Also, suppose there exists at least $\lceil (j-1)/2 \rceil+1$ different integers $i \in [k-2]$ in which $s^j(f)$ is not saturated. In view of the rule $(1)$ of the present algorithm, this implies there are at least $\lceil (j-1)/2 \rceil+1$ integers $j'\leq j-1$ such that $f(j') \geq 2 \lceil j/2 \rceil$. Also, since $f$ is surjective, there exist at least $\lceil j/2 \rceil-1$ integers $j'' \leq j-1$ such that $f(j') \leq 2(\lceil j/2 \rceil -1)$. Consequently, we obtain $\left(\lceil (j-1)/2 \rceil + 1\right) + \left(\lceil j/2 \rceil -1\right) \leq j-1$, which cannot be because $\lceil (j-1)/2 \rceil + \lceil j/2 \rceil = j$. So the hypothesis $H(j)$ is true and the algorithm goes on. Ultimately, we define $\varphi(f)$ as the partition $<s^1(f)>$.
\end{defi}

\begin{prop} \label{varphikshape}
For all $f \in SP_{k-1}$, the partition $\lambda = \varphi(f)$ is an irreducible $k$-shape such that $\partial^k(\lambda) = s^1(f)$ and $\overrightarrow{fr}(\lambda) = \overrightarrow{fix}(f)$.
\end{prop}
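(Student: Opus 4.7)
The plan is to verify the four assertions (well-definedness of $s^1(f)$ with full saturation, identification $\partial^k(\lambda) = s^1(f)$, irreducible $k$-shape property of $\lambda$, and the statistic correspondence) by tracing the algorithm. First I would establish, by downward induction on $j$ from $2k-3$ to $1$, that $s^j(f)$ is a well-defined partial $k$-shape satisfying $H(j)$. The base case $s^{2k-3}(f) = \emptyset$ is trivial. For the inductive step, in case (1) the hypothesis $H(j+1)$ together with Lemma \ref{saturation} ensures that the integer $z_j(f)$ exists and is unique; in case (2) it is given explicitly by the pistol value via $f(j) = 2(\lceil j/2 \rceil + z_j(f))$. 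The validity of $H(j)$ at the end of each step is already verified inside Definition \ref{definitionvarphi}. The important strengthening at the final iteration is that, when $j = 1$, the bound $\lceil 0/2 \rceil = 0$ forces $s^1(f)$ to be saturated in every $i \in [k-2]$.

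Next, I would show that $\lambda := <s^1(f)>$ is an irreducible $k$-shape whose $k$-boundary coincides with $s^1(f)$. The partial $k$-shape condition gives that $cs(s^1(f))$ is a partition, which after identification is $cs^k(\lambda)$. For $rs^k(\lambda)$ to be a partition and for $\partial^k(\lambda) = s^1(f)$, I would use the observation that a column of height $i+1$ and label $1$ in $s^1(f)$ can only be produced at step $j = 2i$ (the conditions $\lceil (j+1)/2 \rceil = i+1$ and $t(j) = 1$ force $j = 2i$), and that saturation in $i$ then forces such a column to be rooted in a row whose top-left cell is hook-lengthed exactly by $k$, i.e., in a row of length exactly $k-i$. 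The analogous statement for label-$2$ columns (which arise only at $j = 2i+1$), combined with the monotonicity of hook lengths along the $k$-rim, yields that $rs(s^1(f))$ is weakly decreasing from bottom to top and that cells of $\lambda$ just outside $s^1(f)$ acquire hook length strictly greater than $k$. Irreducibility then reduces to the bounds $|H_i(\lambda) \cap V_{k-i}(\lambda)| \leq i-1$ and $|H_i(\lambda) \cap V_{k+1-i}(\lambda)| \leq i-1$ (as horizontal steps of the $k$-rim); these follow from the uniform cap $z_j(f) \leq k - 1 - \lceil j/2 \rceil$ combined with the structural analysis above and the surjectivity of $f$.

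Finally, for the statistic correspondence $\overrightarrow{fr}(\lambda) = \overrightarrow{fix}(f)$, I would argue, for each $i \in [k-2]$, that $f(2i) = 2i$ if and only if $H_{k-i}(\lambda) \cap V_{i+1}(\lambda) = \emptyset$. Columns of height $i+1$ and label $1$ arise only at step $j = 2i$: in case (2) the number added is $z_{2i}(f) = f(2i)/2 - i$, which vanishes precisely when $f(2i) = 2i$, and case (1) at $j = 2i$ requires $f(2i) > 2i$ outright. Combined with the fact that each such column is rooted in a row of length $k-i$, this gives the equivalence. The hardest part will be pinning down $\partial^k(\lambda) = s^1(f)$ rigorously: this demands careful tracking of the three lifting contexts in the $\oplus$ operation (guided by Lemma \ref{detaillifting1} and Remarks \ref{remarquesaturation} and \ref{explicitation3}) to confirm that the labels produced by the algorithm really match the hook-length stratification of $\lambda$. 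Once this is in hand, both the $k$-shape structure and the irreducibility, and then the statistic correspondence, fall out naturally.
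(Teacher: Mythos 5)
Your outline matches the paper's own decomposition almost exactly: the paper splits Proposition \ref{varphikshape} into Lemma \ref{lemmapartialk} ($\partial^k(\lambda)=s^1(f)$), Lemma \ref{lemmars} ($rs^k(\lambda)$ is a partition), and Lemma \ref{lemmairreducible} (irreducibility and the statistic correspondence), and your treatment of the inductive validity of $H(j)$, of the bound $z_j(f)\leq k-1-\lceil j/2\rceil$ giving irreducibility, and of the equivalence $f(2i)=2i\Leftrightarrow z_{2i}(f)=0\Leftrightarrow H_{k-i}(\lambda)\cap V_{i+1}(\lambda)=\varnothing$ is faithful to what the paper does.

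However, there is a genuine gap: you defer precisely the part that carries the mathematical weight. Saying that ``monotonicity of hook lengths along the $k$-rim'' yields that $rs(s^1(f))$ is weakly decreasing, and that cells just outside $s^1(f)$ acquire hook length $>k$, is not an argument --- it is a restatement of what must be proved, and you yourself flag it as ``the hardest part'' to be done later by ``careful tracking of the three lifting contexts.'' In the paper this tracking is the actual content: for $\partial^k(\lambda)=s^1(f)$ one reduces to showing that every \emph{anticoin} (a cell glued simultaneously to the left of a row and beneath a column of $s^1(f)$) has hook length $>k$, and then runs a three-way case analysis according to which lifting rule created the anticoin; the case of rule $(3)$ is delicate and needs Remark \ref{explicitation3} plus an argument that the cells displaced when a saturated column momentarily loses its saturation must eventually be lifted again. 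Likewise, $rs^k(\lambda)$ being a partition is proved by assuming two consecutive rows have lengths $x$ and $x'=x+1$ and showing the offending column could not have been lifted under any of the three rules (using Lemma \ref{detaillifting1} and Remark \ref{explicitation3} to rule out rule $(3)$). Without these arguments the identification of the label-$1$ columns of height $i+1$ with $H_{k-i}(\lambda)\cap V_{i+1}(\lambda)$ --- on which your irreducibility and statistic claims rest --- is unsupported. You have the right plan, but the proposal as written establishes only the easy third of the proposition.
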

\hspace*{-5.8mm}
For example, consider the surjective pistol $f=(2,8,4,10,10,6,8,10,10,10) \in SP_5$ whose tableau is depicted in Figure \ref{exemplepistol}. Apart from $10$, the only fixed point of $f$ is $6$, so $\overrightarrow{fix}(f) = (0,0,1,0)$.
\begin{figure}[!h]
\centering
\includegraphics[width=4cm]{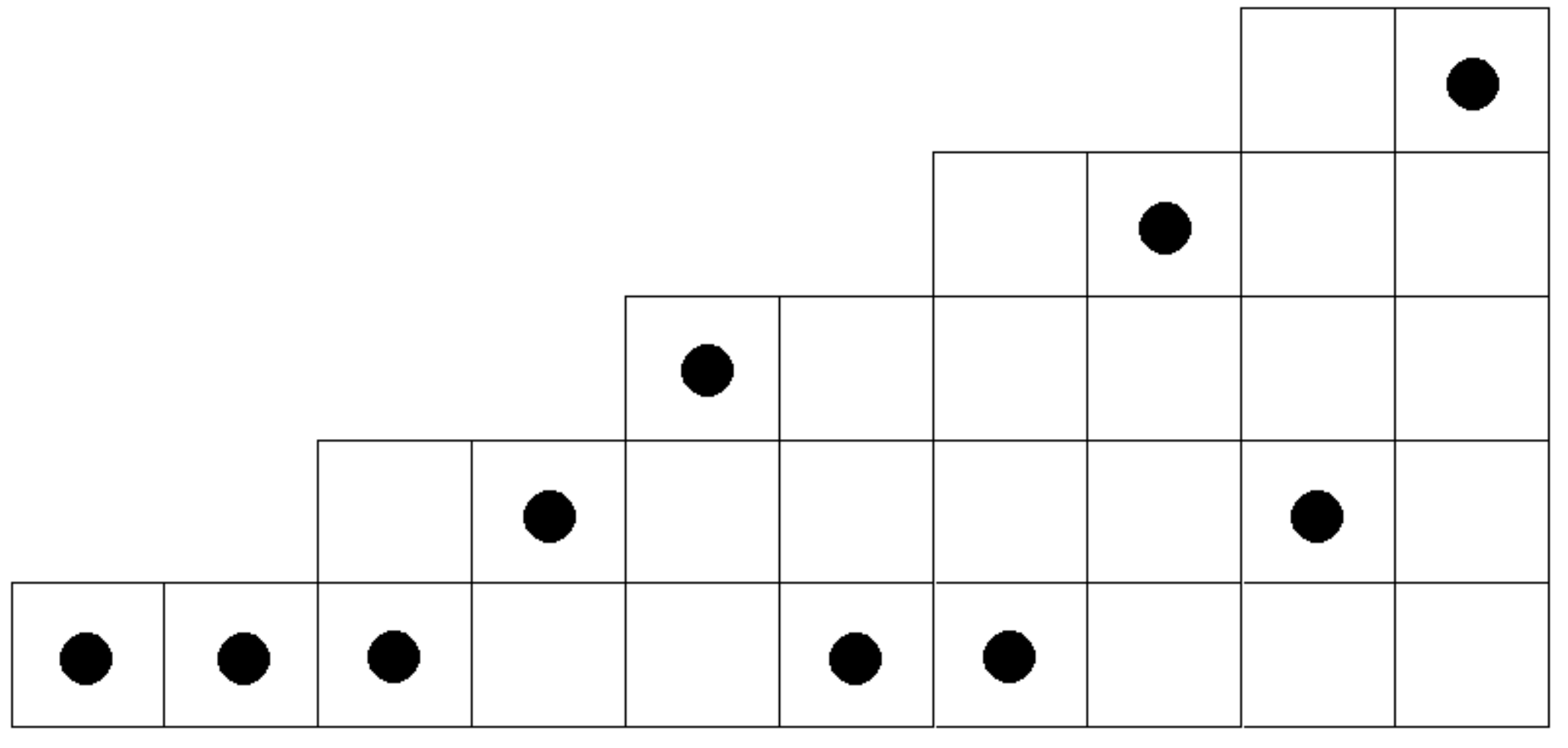}
\caption{Surjective pistol $f = (2,8,4,10,10,6,8,10,10,10) \in SP_5$.}
\label{exemplepistol}
\end{figure}
\\
Algorithm $\varphi$ provides the sequence $(s^8(f),s^7(f),\hdots,s^1(f))$ depicted in Figure \ref{tableau} (note that $s^8(f) = s^7(f) = s^6(f)$ because $z_7(f) = z_6(f) = 0$).
\begin{figure}[!h]
\begin{tabular}{|c|c|c|c|c|c|}
\hline
 $j \in \{8;7;6\}$ & $j=5$ & $j=4$ & $j=3$ & $j=2$ & $j=1$\\
 \hline
 & & & & &\\
\includegraphics[width=0.5cm]{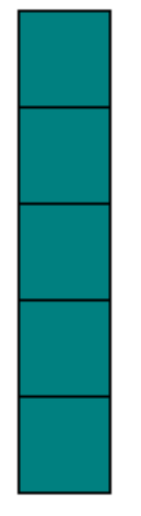} & \includegraphics[width=1cm]{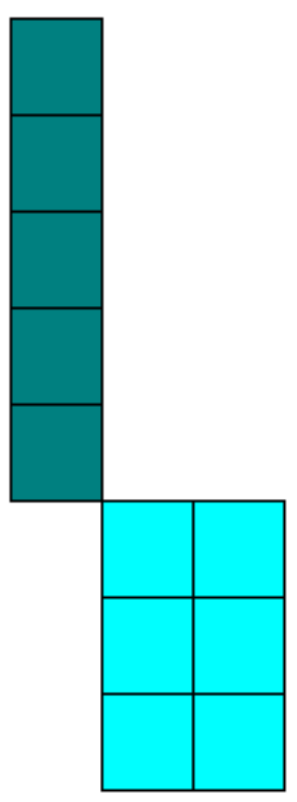} & \includegraphics[width=2.05cm]{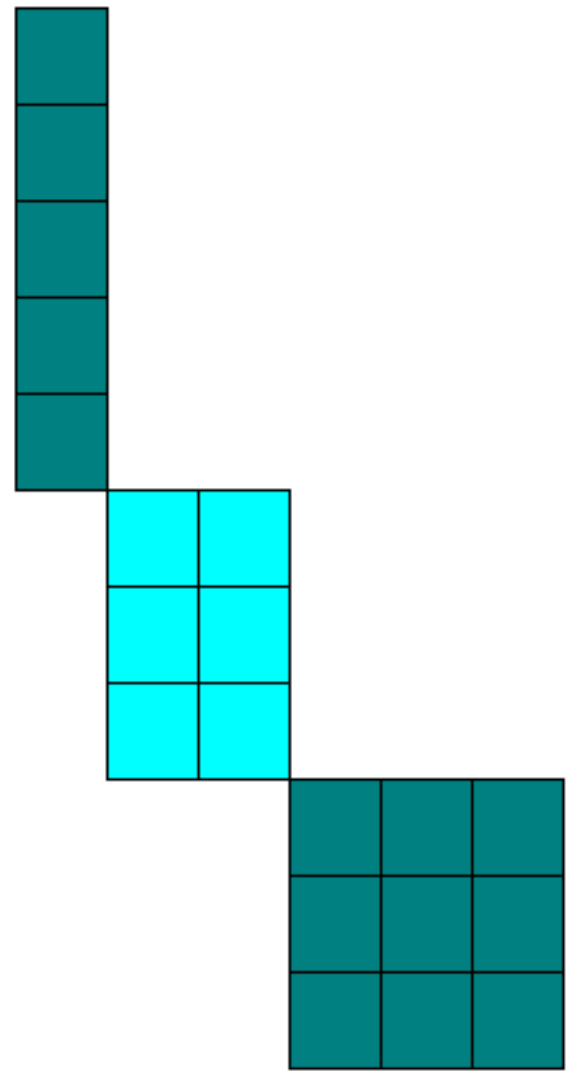} & \includegraphics[width=2.3cm]{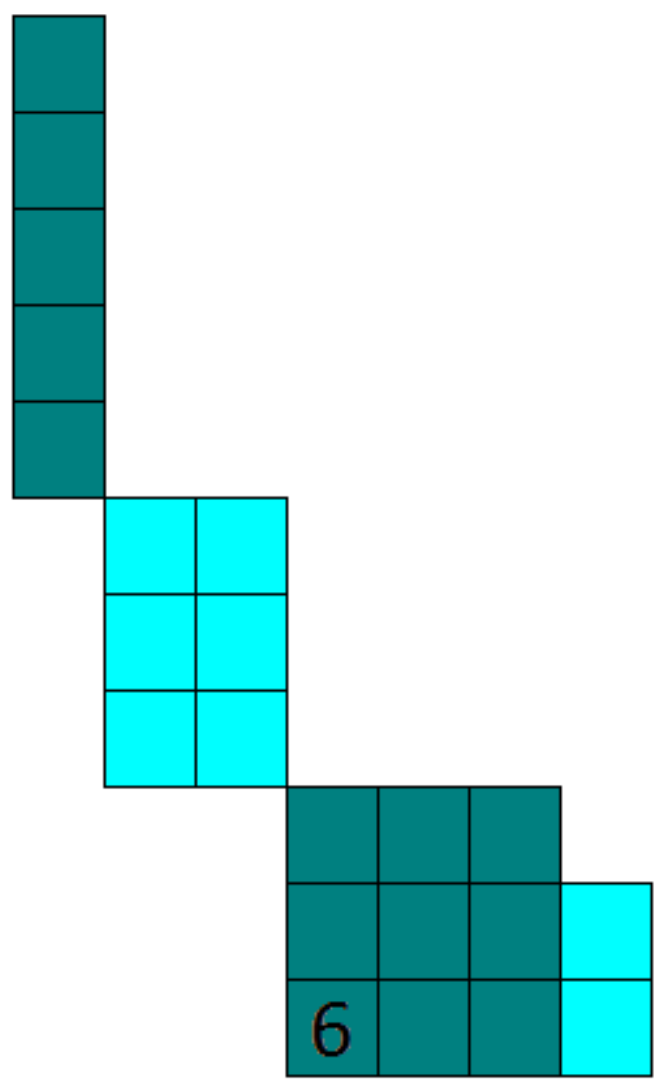} & \includegraphics[width=3cm]{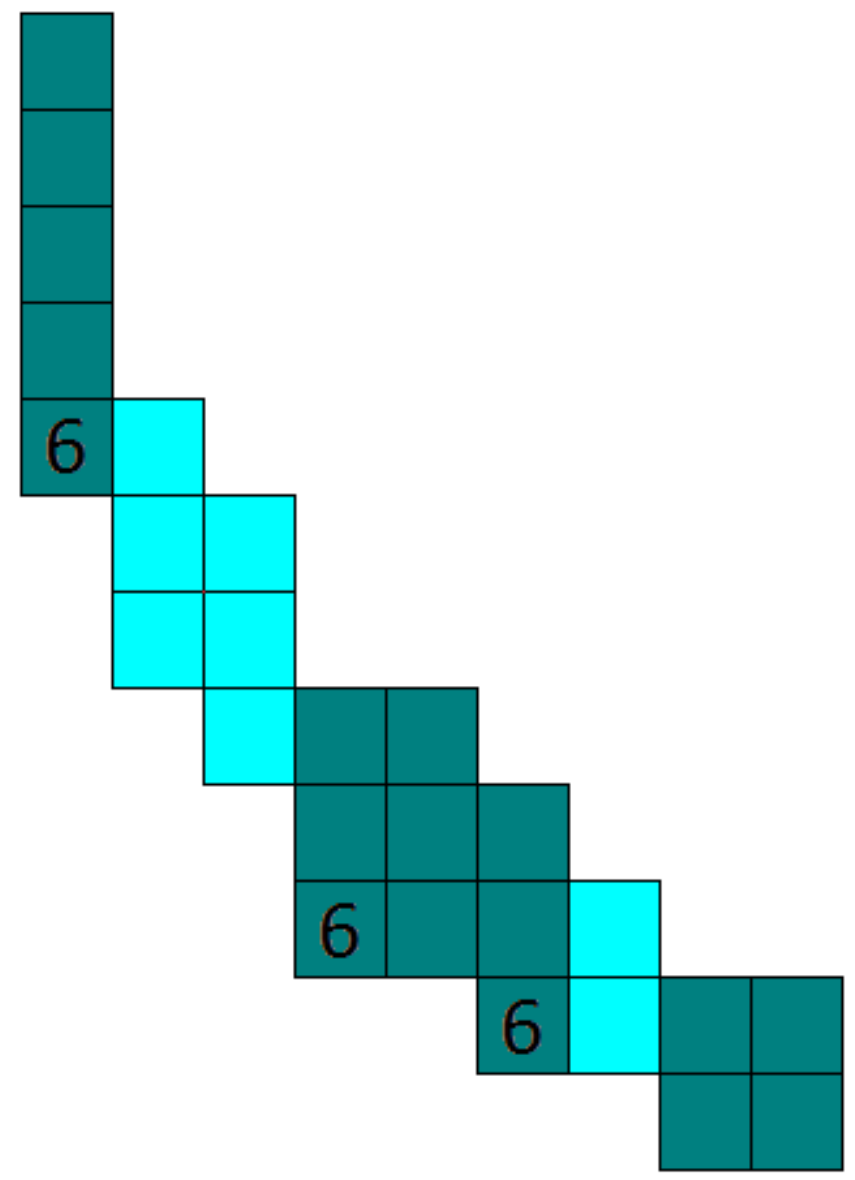} & \includegraphics[width=4cm]{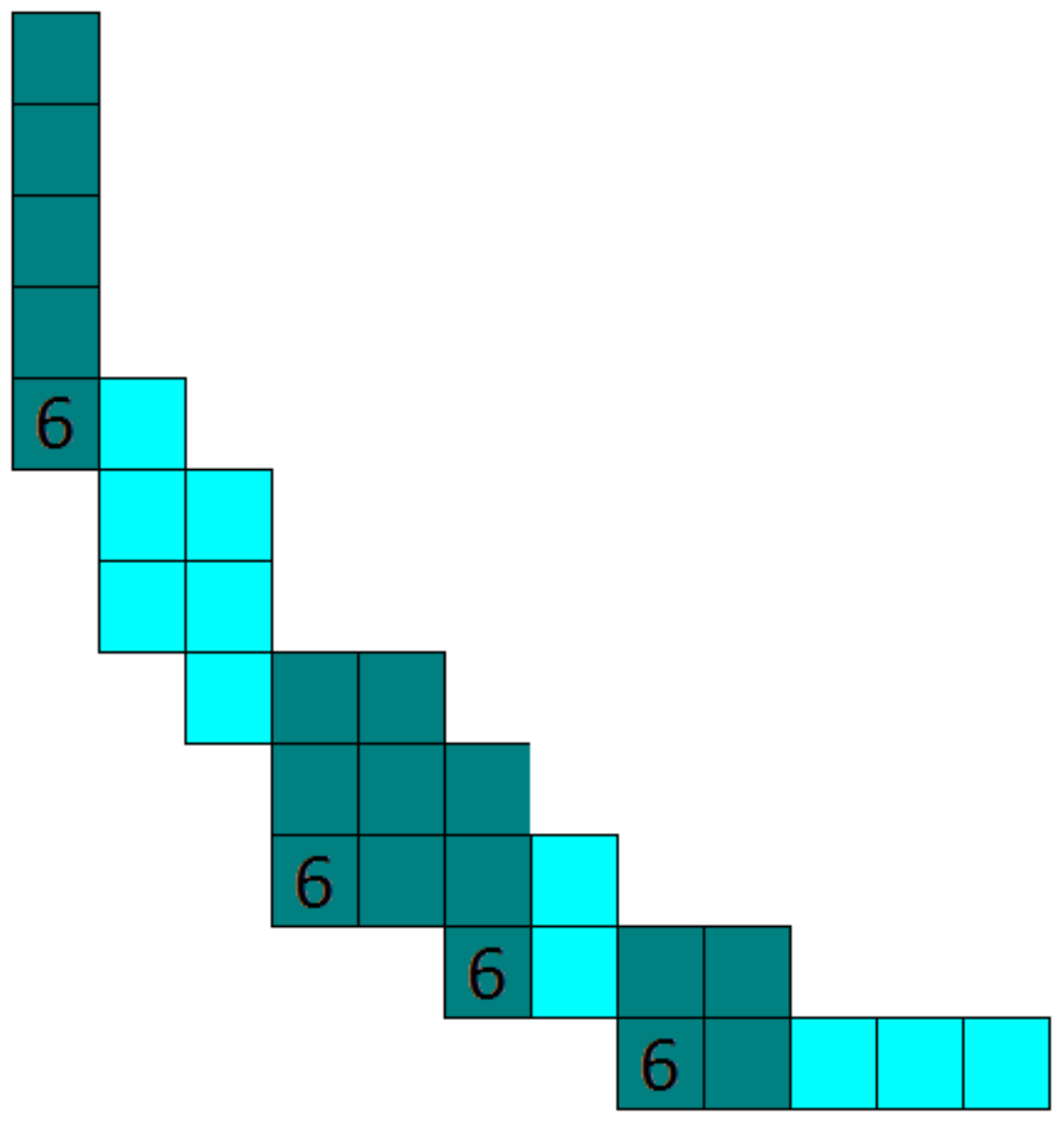} \\
 \hline
\end{tabular}
 \caption{Sequence $(s^j(f))_{j \in [8]}$.}
 \label{tableau}
\end{figure}
\\
Thus, we obtain $s^1(f) = \partial^6(\lambda)$ where $\lambda = \varphi(f) = <s^1(f)>$. In particular, the sequences $rs^6(\lambda) = (5,4,4,3,\hdots,1)$ and $cs^6(\lambda) = (5,3,3,3,\hdots,1)$ are partitions, so $\lambda$ is a $6$-shape. Finally, we can see in Figure \ref{exempleirreducible} that $\lambda$ is irreducible and $\overrightarrow{fr}(\lambda) = (0,0,1,0) = \overrightarrow{fix}(f)$.
\begin{figure}[!h]
\centering
\includegraphics[width=5cm]{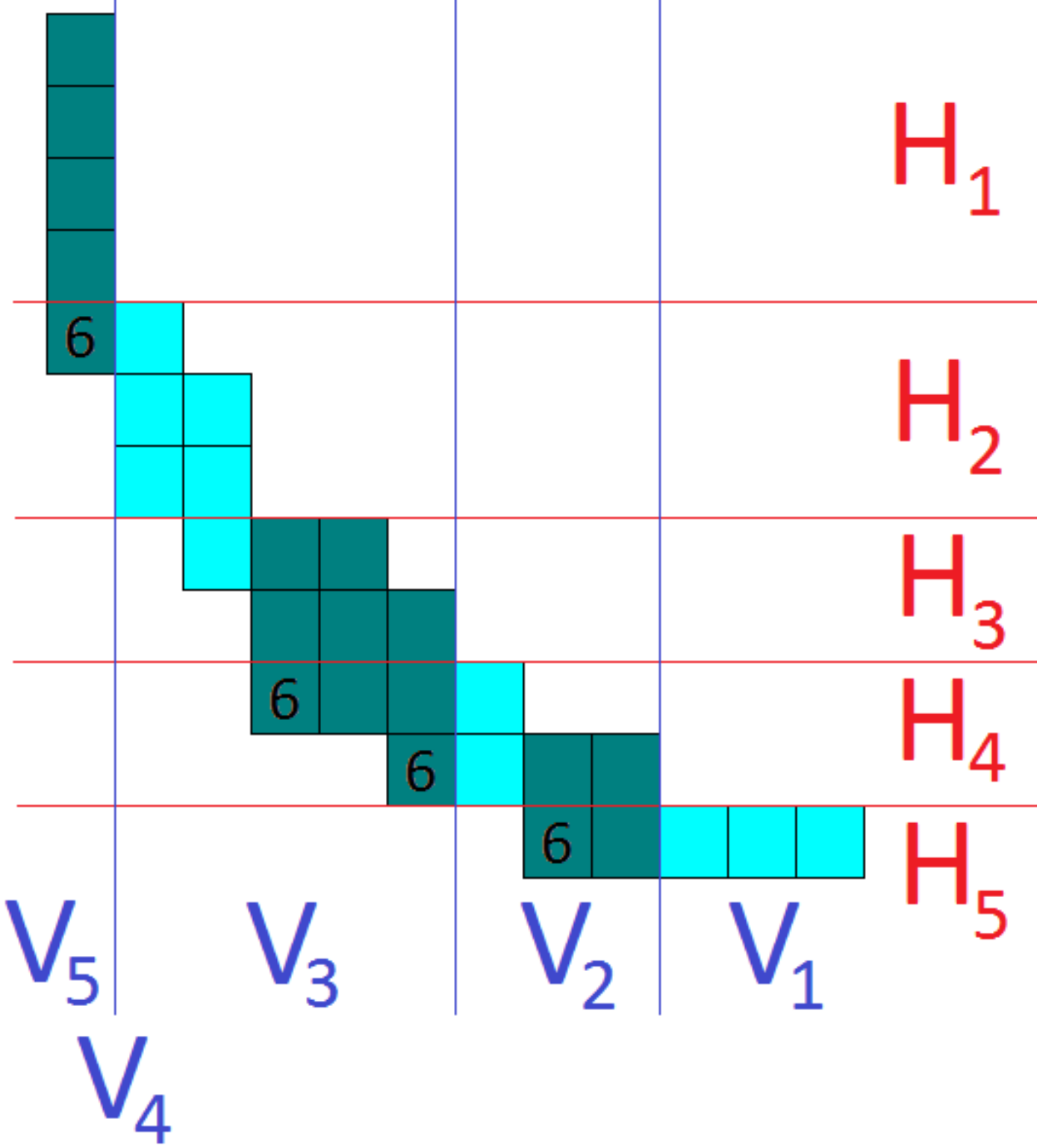}
\caption{$6$-boundary $\partial^6(\lambda) = s^1(f)$ of the irreducible $6$-shape $\lambda = \varphi(f)$.}
\label{exempleirreducible}
\end{figure}
\\
We split the proof of Proposition \ref{varphikshape} into Lemmas \ref{lemmapartialk}, \ref{lemmars} and \ref{lemmairreducible}.

\begin{lem} \label{lemmapartialk} For all $f \in SP_{k-1}$, we have $\partial^k(\varphi(f)) = s^1(f)$.
\end{lem}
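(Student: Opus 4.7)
The plan is to establish, by descending induction on $j\in\{1,\ldots,2k-3\}$, the invariant
\[
\partial^k(\langle s^j(f)\rangle)=s^j(f),
\]
which specializes to the desired statement at $j=1$. The base case $j=2k-3$ is trivial because $s^{2k-3}(f)$ is the empty skew partition, so $\langle s^{2k-3}(f)\rangle$ is the empty partition and both sides vanish. For the inductive step, I use the recursion $s^j(f)=s^{j+1}(f)\oplus^k_{t(j)}\lceil(j+1)/2\rceil^{z_j(f)}$ from Definition \ref{definitionvarphi} and analyze how the gluing of the rectangle followed by the three lifting rules of Definition \ref{sumskew} modifies the cells on the lower profile of the partial $k$-shape.

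The inclusion $s^j(f)\subseteq\partial^k(\langle s^j(f)\rangle)$ is the easy direction: every cell $c\in s^j(f)$ has hook length at most $k$ inside $s^j(f)$ by the very definition of a partial $k$-shape (at most $k-1$ if $c$ sits in a column of label $2$, which is still $\leq k$). Moreover, because $\langle s^j(f)\rangle\setminus s^j(f)$ only adds cells strictly below each column and strictly to the left of each row of $s^j(f)$, the hook length of $c$ is the same in $s^j(f)$ and in $\langle s^j(f)\rangle$. Hence $c\in\partial^k(\langle s^j(f)\rangle)$.

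For the reverse inclusion, I must show that every cell in $\mu^j:=\langle s^j(f)\rangle\setminus s^j(f)$ has hook length strictly greater than $k$ in $\langle s^j(f)\rangle$. Since moving one step down or one step left strictly increases the hook length inside the partition $\langle s^j(f)\rangle$ (the partition inequalities $\lambda_i\geq\lambda_{i+1}$ and $\lambda'_{j_0}\geq\lambda'_{j_0+1}$ make this immediate), it is enough to verify the inequality at each outer corner of $\mu^j$. For such an outer corner $c=(i,j_0)$, I case-split according to whether $(i+1,j_0)$ and $(i,j_0+1)$ lie in $s^j(f)$ or outside $\langle s^j(f)\rangle$; the subcase in which both lie outside would force $c$ to be a northeastern corner of $\lambda:=\langle s^j(f)\rangle$ with hook length $1\leq k$, contradicting $c\in\mu^j$. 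In the remaining subcases the hook length of $c$ in $\lambda$ decomposes as $H+L+1$, where $H$ is the height of the column of $s^j(f)$ above $c$ and $L$ is the length of the row of $s^j(f)$ right of $c$ (with the convention that absent quantities are zero). I then compare $c$ to the bottom cell $c_0$ of the column above it: the partial $k$-shape bound gives $h_{s^j(f)}(c_0)\leq k$ in the label $1$ case and $\leq k-1$ in the label $2$ case, and a direct bookkeeping relating $h_\lambda(c)$ and $h_{s^j(f)}(c_0)$ via the row-length difference $\lambda_i-\lambda_{i+1}$ shows that in every subcase $h_\lambda(c)\geq k+1$.

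The main obstacle is precisely the label $2$ case (together with the case of a non-saturated label $1$ column), where the partial $k$-shape bound alone is one unit short, so the extra unit must be supplied by a strict row-length jump $\lambda_i>\lambda_{i+1}$. Here the structural statements already proved in Section \ref{sec:preliminaires} are exactly what is needed: Lemma \ref{detaillifting1} and Remark \ref{explicitation3} describe how rules (2) and (3) propagate through adjacent columns of equal height and label, and they force the required strict jump at the rows immediately below each label $2$ column and each non-saturated label $1$ column. Combined with Remark \ref{remarquesaturation}, which ensures that every column saturated in $s^{j+1}(f)$ remains saturated in $s^j(f)$, these tools let one carry the invariant $\partial^k(\langle s^j(f)\rangle)=s^j(f)$ through each step of the algorithm and close the induction.
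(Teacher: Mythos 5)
Your reduction of the problem to the hook lengths of the inner-corner cells (the paper's ``anticoins'') is the right one, and your easy inclusion is fine. But the backbone of your argument --- the descending induction on $j$ with invariant $\partial^k(\langle s^j(f)\rangle)=s^j(f)$ --- does not work, because that invariant is simply false for intermediate $j$. Take $k=6$ and $f=(2,4,4,6,8,6,8,10,10,10)\in SP_5$, so that $(z_8,\dots,z_4)=(1,0,0,1,1)$. Then $s^5(f)$ consists of a saturated column of height $5$ and label $1$ together with a column of height $3$ and label $2$, both rooted at the bottom level; at step $j=4$ one glues a single column $C_0$ of height $3$ and label $1$, rule $(2)$ lifts the label-$2$ column one level up (and rule $(1)$ pushes the height-$5$ column three levels up), and the computation stops there. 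In $s^4(f)$ the anticoin sitting under the label-$2$ column and to the left of the one-cell row containing the bottom cell of $C_0$ has hook length $1+1+3=5\leq 6$ in $\langle s^4(f)\rangle$, so $\partial^6(\langle s^4(f)\rangle)\supsetneq s^4(f)$. The point is exactly the one you flag as ``the main obstacle'': after a rule-$(2)$ lift the extra unit must come from the row headed by the responsible label-$1$ column $C_0$ being long enough, i.e.\ from $C_0$ being \emph{saturated} (bottom hook length exactly $k$). A label-$1$ column of height $y$ freshly glued at step $j=2y-2$ has bottom hook length at most $z_{2y-2}+y-1\leq k-1$, so it is never saturated at the moment it causes the lift; Lemma \ref{detaillifting1} and Remark \ref{explicitation3} do not supply the missing unit at that stage, and nothing does until later steps lengthen that row.

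This is why the paper does not run an induction over $j$: it argues only about the terminal object $s^1(f)$, where Hypothesis $H(1)$ of Definition \ref{definitionvarphi} guarantees that \emph{every} label-$1$ column is saturated, and it classifies each anticoin of $s^1(f)$ according to which of the three lifting rules created it. Rule $(1)$ is handled by the threshold inequality alone (your bookkeeping works there), rule $(2)$ by the saturation of the label-$1$ column heading the row to the right of the anticoin, and rule $(3)$ by a genuinely global argument tracing the history of the temporarily de-saturated column through the remaining steps of the algorithm (using Remark \ref{explicitation3} and Lemma \ref{detaillifting1}) to show the relevant hook length ends at $k-1$, hence the anticoin at $k+1$. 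To repair your proof you would have to either restrict the claim to $j=1$ and import saturation, or replace your invariant by a weaker one that only controls the anticoins lying under columns whose responsible label-$1$ column is already saturated --- at which point you are reconstructing the paper's argument.
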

\hspace*{-6mm} \textbf{Proof.}
By construction, the skew partition $s^1(f)$ is a saturated partial $k$-shape (the saturation is guaranteed by Hypothesis $H(1)$). As a partial $k$-shape, the hook length of every cell doesn't exceed $k$. Consequently, to prove that $\partial^k(\varphi(f)) = s^1(f)$, we only need to show that the hook length $h_1$ of every \textit{anticoin} of $s^1(f)$ (namely, cells glued simultaneously to the left of a row of $s^1(f)$ and beneath a column of $s^1(f)$, see Figure \ref{anticoin}) is such that $h_1 > k$.
\begin{figure}[!h]
\centering
\begin{minipage}{.5\textwidth}
  \centering
\includegraphics[width=3cm]{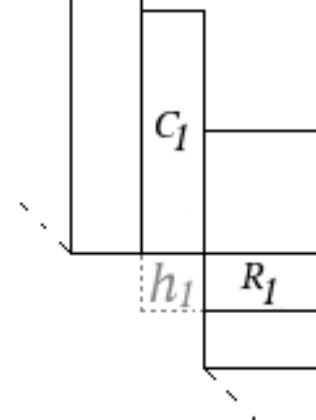}
\caption{Anticoin labeled by its hook length $h_1$.}
\label{anticoin}
\end{minipage}%
\begin{minipage}{.5\textwidth}
  \centering
\includegraphics[width=3cm]{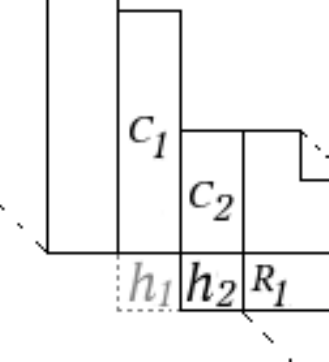}
\caption{}
\label{anticoin2}
\end{minipage}
\end{figure}
\\
Anticoins of $s^1(f)$ are created by lifting columns in one of the three contexts $(1)$,$(2)$ or $(3)$ of Definition \ref{sumskew}. Let $x_1$ (resp. $y_1$) be the length of the row $R_1$ (resp. the height of the column $C_1$).
\begin{enumerate}
\item If $C_1$ has been lifted in the context $(1)$, then $x_1+y_1 > k$ (if $c_1$ is labeled by $1$) or $x_1+y_1 > k-1$ (if $C_2$ is labeled by $2$). In either case, we obtain $h_1 = 1+x_1+y_1 > k$.
\item If $C_1$ has been lifed in the context $(2)$, then the first cell (from left to right) of the row $R_1$ is a corner, and it is the bottom cell of a column $C_2$ labeled by $1$ (see Figure \ref{anticoin2}). Let $y_2$ be the height of $C_2$. Since $C_2$ is saturated, then the hook length $h_2 = x_1+y_2-1$ of its bottom cell equals $k$. Consequently, since $y_1 \geq y_2$, we obtain $h_1 = x_1+y_1-1 > k$.
\item Else $C_1$ has been lifted in the context $(3)$. Let $C_0$ be the saturated column of $s^1(f)$ such that $C_1$ is the column that contains the last cell (from left to right) of the row $R_0$ in which $C_0$ is rooted (see Figure \ref{anticoin3}).
\begin{figure}[!h]
\centering
\begin{minipage}{.35\textwidth}
  \centering
\includegraphics[width=4cm]{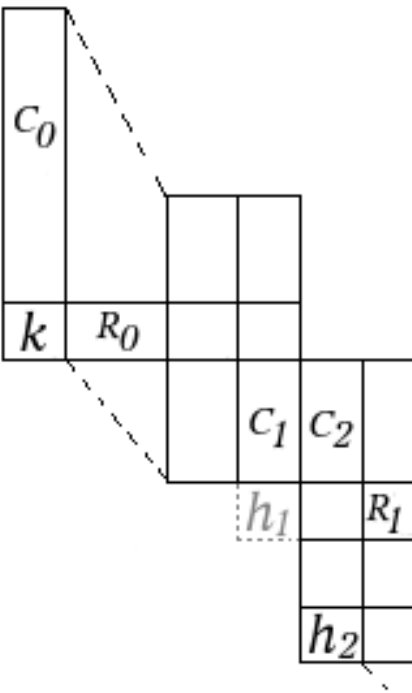}
\caption{$s^1(f)$.}
\label{anticoin3}
\end{minipage}%
\begin{minipage}{.35\textwidth}
  \centering
\includegraphics[width=8cm]{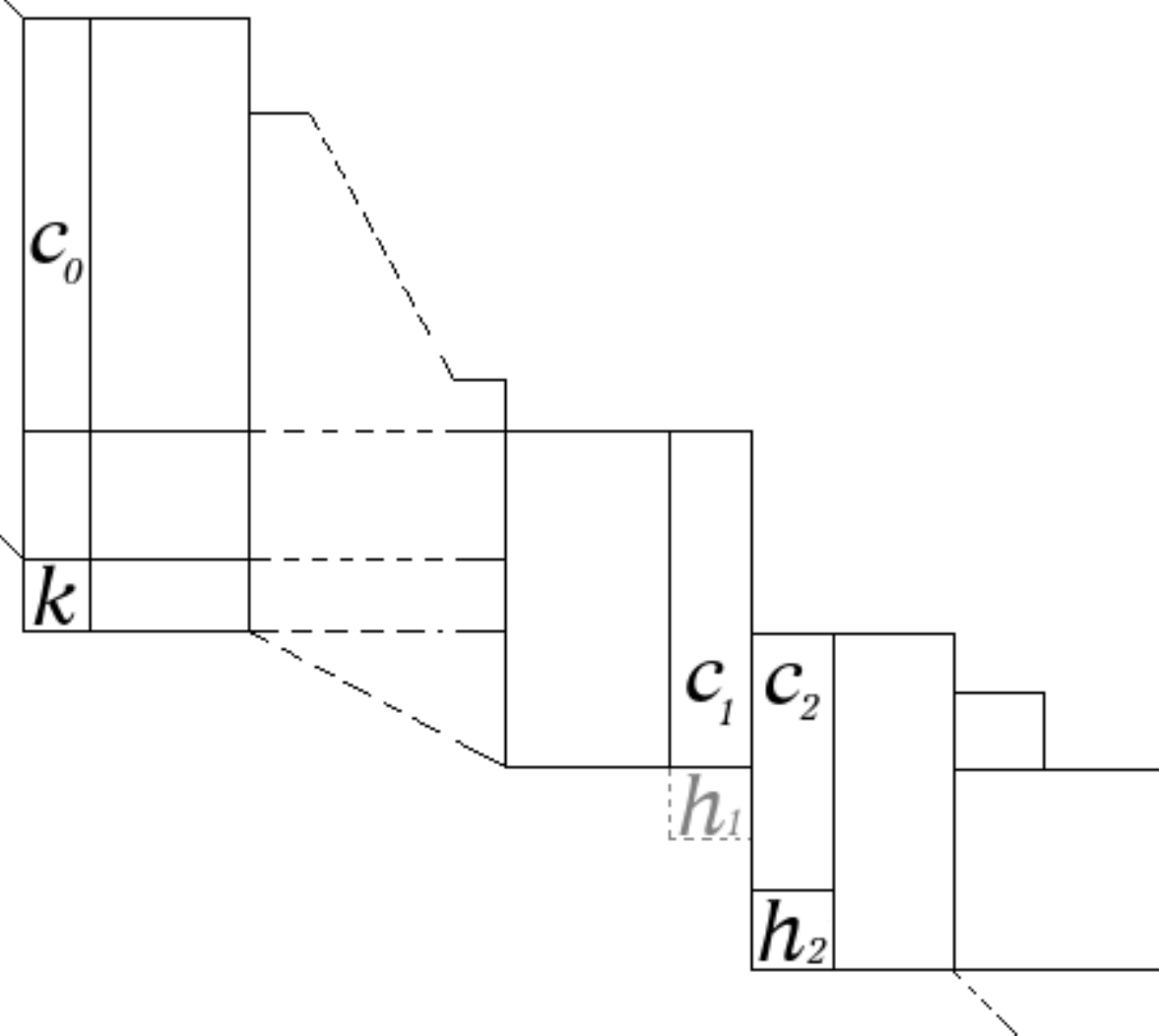}
\caption{$s^{j+1}(f)$.}
\label{democonservation1}
\end{minipage}
\end{figure}
\\
Let $j \in [2k-4]$ such that $C_0$ is saturated in the partial $k$-shape $s^{j+1}(f)$ and such that $C_0$ loses its saturation at some point of the computation of $s^j(f)$ (recall that $C_0$ is saturated at the end of this computation by the rule $(3)$ of Definition \ref{sumskew}). The partial $k$-shape $s^{j+1}(f)$ then presents the situation depicted in Figure \ref{democonservation1}.
Following Remark \ref{explicitation3}, the columns $C_1$ and $C_2$ have the same height and the same label $2$ (and $h_2 = k-1$), and in order for $C_0$ to lose temporarily its saturation between $s^{j+1}(f)$ and $s^j(f)$, there exists a column $C_3$ labeled by $1$ on the right of $C_2$ such that $C_2$ is lifted at the same level as $C_1$ in the context $(2)$ of Definition \ref{sumskew}, with $C_3$ being the column responsible for this lifting (see Figure \ref{democonservation3}).
\begin{figure}[!h]
\centering
\begin{minipage}{.5\textwidth}
  \centering
\includegraphics[width=9cm]{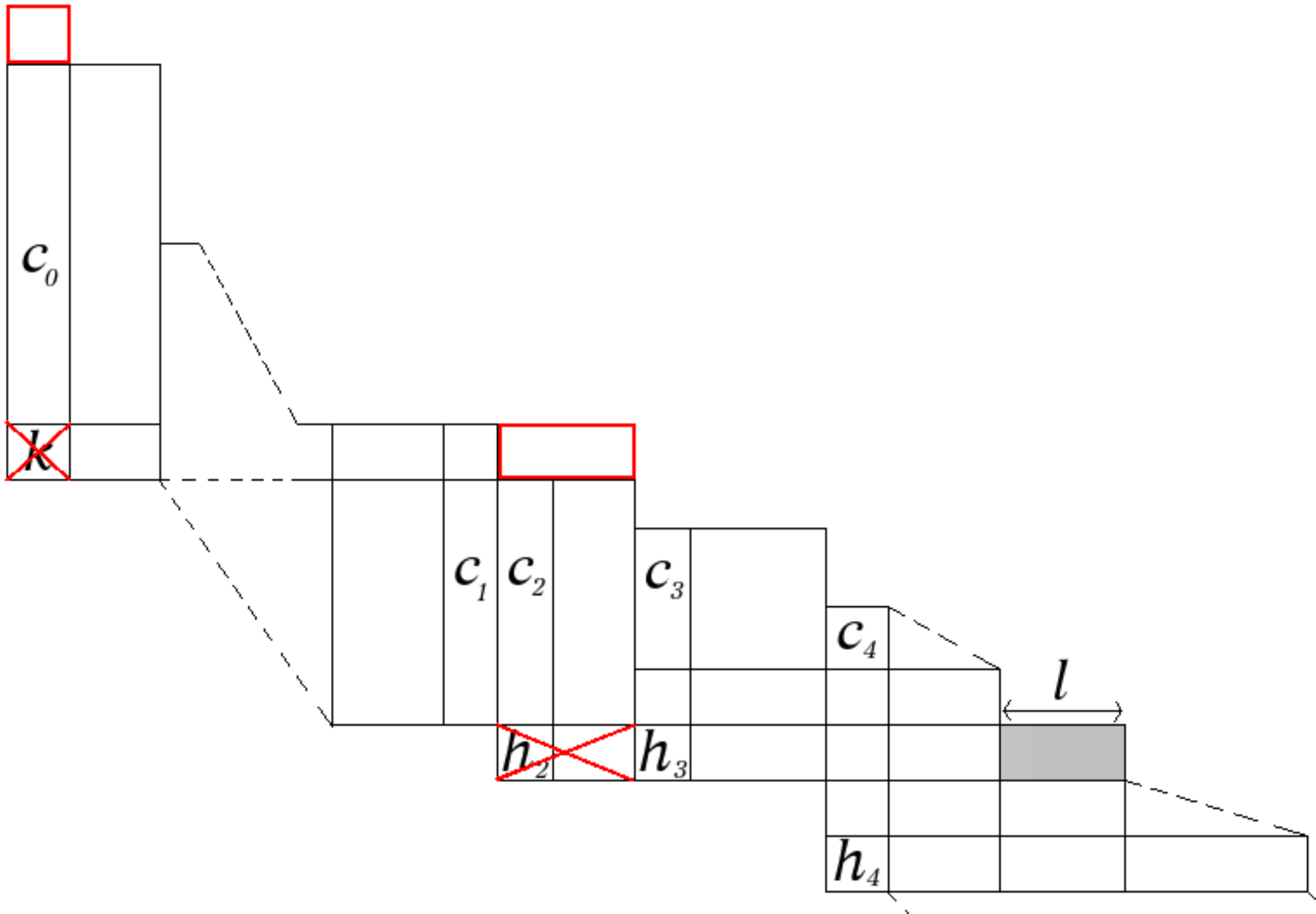}
\caption{Between $s^{j+1}(f)$ and $s^j(f)$.}
\label{democonservation3}
\end{minipage}%
\begin{minipage}{.5\textwidth}
  \centering
\includegraphics[width=9cm]{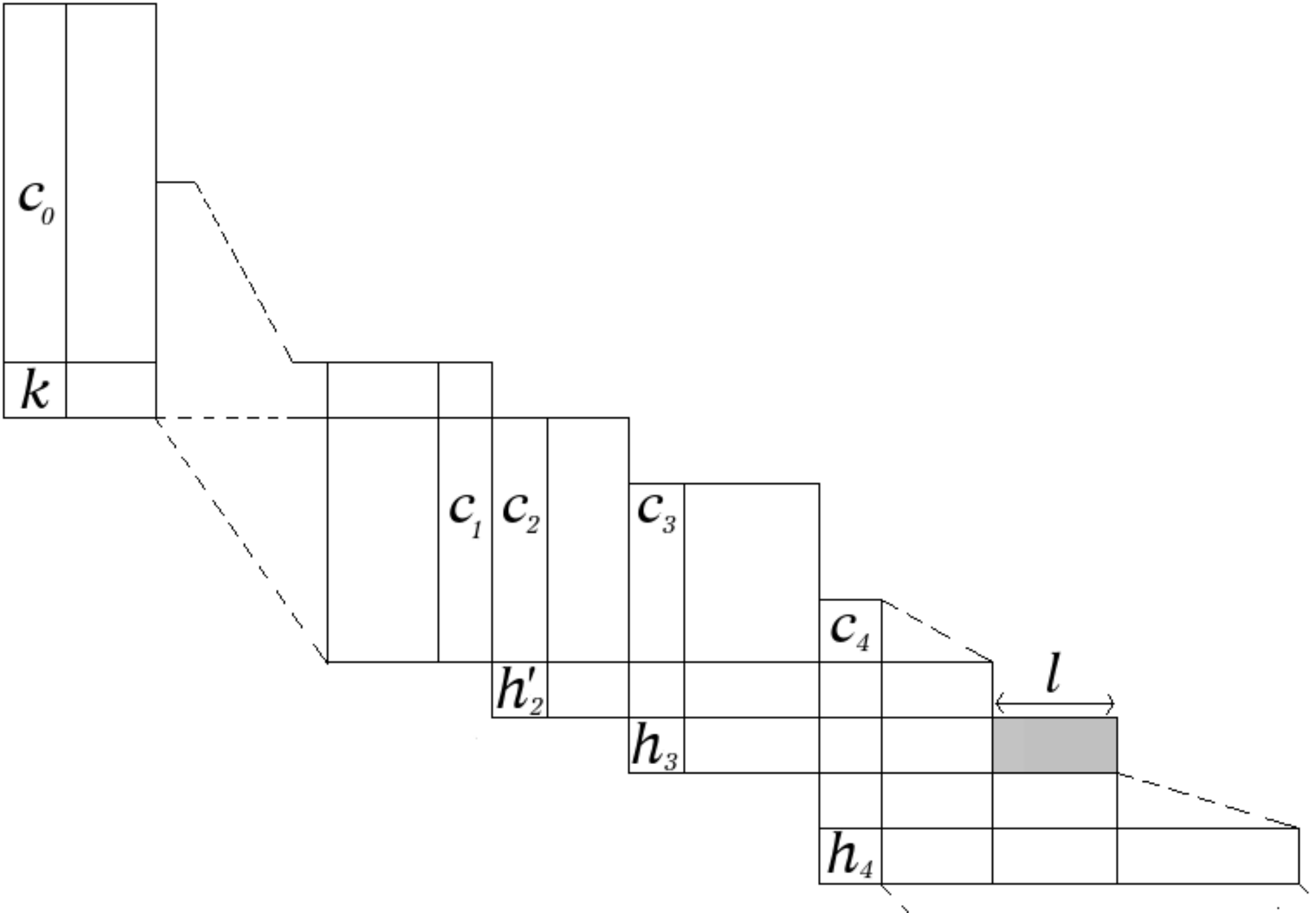}
\caption{$s^{j}(f)$}
\label{democonservation31}
\end{minipage}
\end{figure}
\\
Now, the rule $(3)$ of Definition \ref{sumskew} yields the situation depicted in Figure \ref{democonservation31}, in which $h'_2 = h_2 - l$ where $l \geq 1$ is the number of gray cells.\\
Finally, in order to saturate $C_3$ (every column labeled by $1$ being saturated in $s^1(f)$), it is necessary to lift the $l$ gray cells. Indeed, otherwise, the column $C_3$ would become saturated by lifting columns $C_5,C_6,\hdots,C_m$ whose top cells would be glued to the right of the last gray cells, meaning those columns have the same height $y$ and the same label $t$ as the $l$ columns whose top cells are the gray cells. Obviously, since $C_3$ is not saturated yet at that moment, the columns $C_5,C_6,\hdots,C_m$ are not lifted in the context $(3)$ of Definition \ref{sumskew}. In view of Lemma \ref{detaillifting1}, these columns must have been lifted in the context $(2)$, which implies every column of height $y$ and label $t$ are lifted to the same level as the last gray cell. But then it would lift the column $C_3$ in the context $(1)$ of Definition \ref{sumskew} instead of saturating it, which is absurd. So the $l$ gray cells are necessarily lifted in $s^1(f)$ (see Figure \ref{democonservation32}), in which $h_2'$ has become $h_2$ again).
\begin{figure}[!h]
\centering
\begin{minipage}{.5\textwidth}
  \centering
\includegraphics[width=9cm]{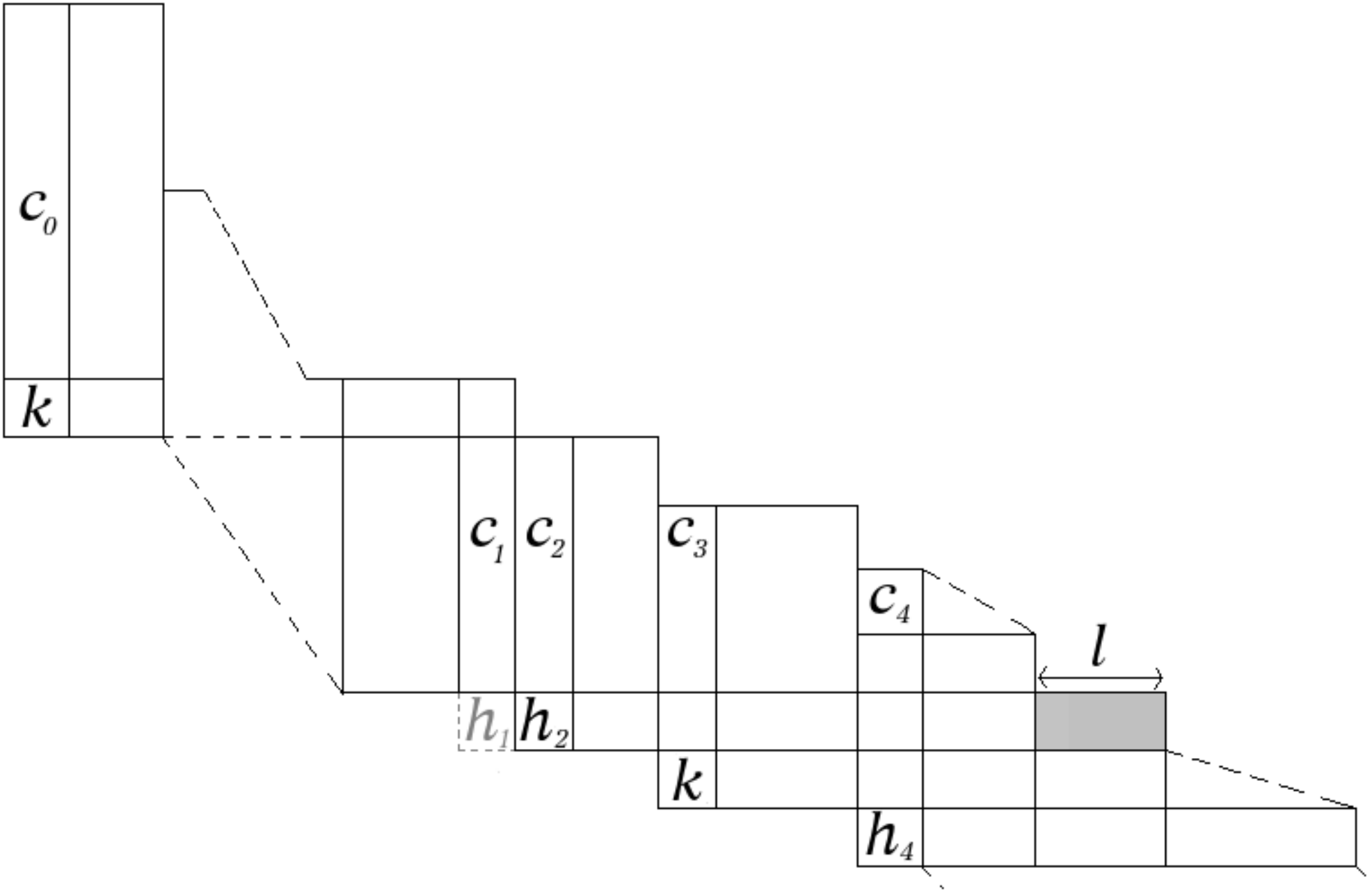}
\caption{$s^{1}(f)$}
\label{democonservation32}
\end{minipage}%
\begin{minipage}{.5\textwidth}
  \centering
\includegraphics[width=10cm]{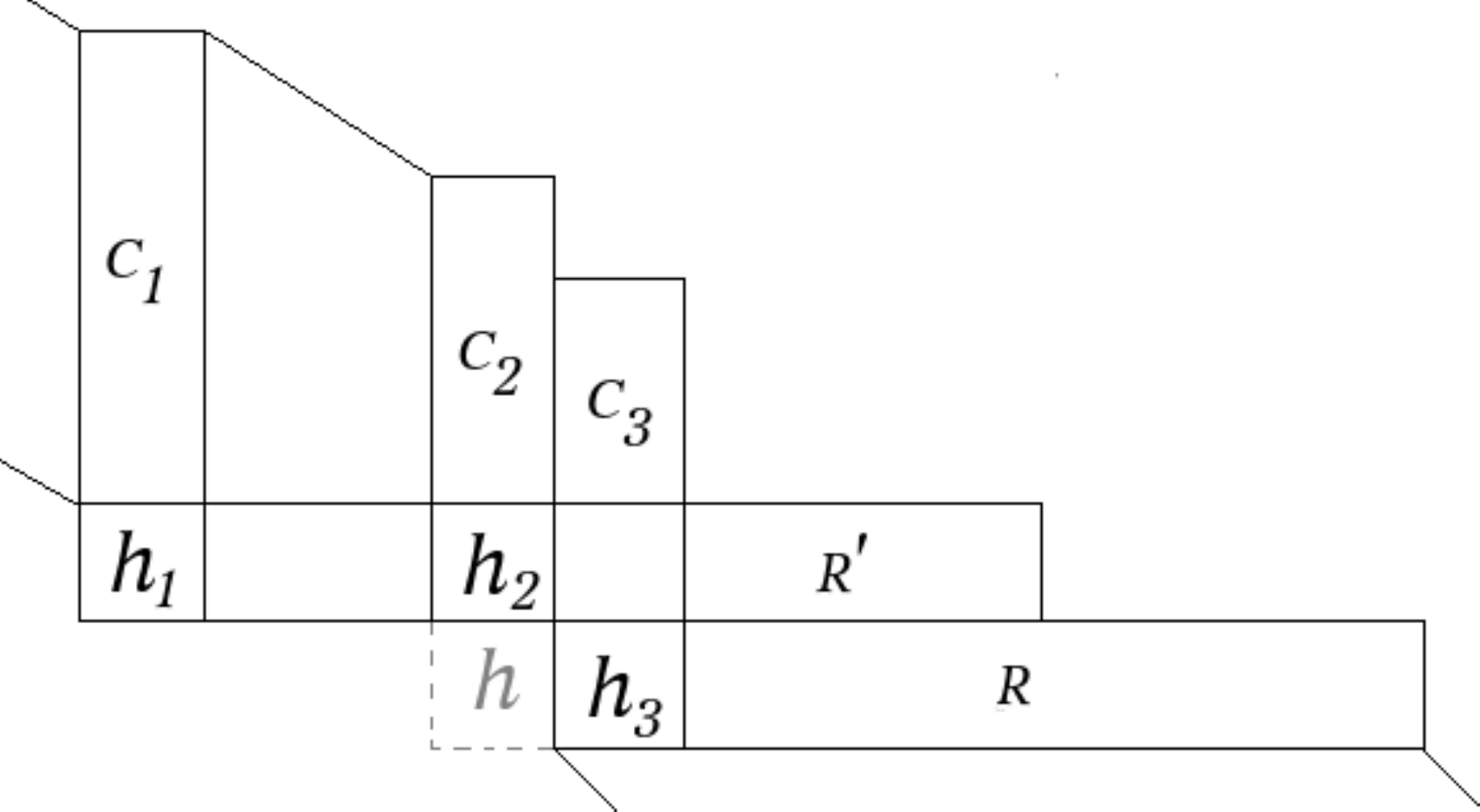}
\caption{$s^{1}$}
\label{preuvers}
\end{minipage}
\end{figure}
\\
In particular, the hook length $h'_2 = h_2$ equals $k-1$ and we obtain $h_1 = h_2+ 2 > k$.\qed
\end{enumerate}

\begin{lem} \label{lemmars}
For all $f \in SP_{k-1}$, the partition $\lambda = \varphi(f)$ is a $k$-shape.
\end{lem}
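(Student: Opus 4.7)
By Lemma \ref{lemmapartialk}, $\partial^k(\lambda) = s^1(f)$, so $cs^k(\lambda) = cs(s^1(f))$ and $rs^k(\lambda) = rs(s^1(f))$. Since $s^1(f)$ is a partial $k$-shape by construction in the algorithm $\varphi$, the column shape $cs(s^1(f))$ is a partition automatically; the entire task is to prove that $rs(s^1(f))$ is also a partition, i.e., that the row lengths of $s^1(f)$ are weakly decreasing from bottom to top.

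The plan is to prove this by decreasing induction on $j \in \{1, 2, \ldots, 2k-3\}$: namely, that $rs(s^j(f))$ is a partition for every such $j$. The base case $j = 2k-3$ is immediate since $s^{2k-3}(f)$ is empty. For the inductive step one analyzes the operation $s^j(f) = s^{j+1}(f) \oplus^k_{t(j)} \lceil(j+1)/2\rceil^{z_j(f)}$. The initial gluing of $z_j(f)$ columns of height $\lceil(j+1)/2\rceil$ on the right extends each of the bottom $\lceil(j+1)/2\rceil$ rows by exactly $z_j(f)$ cells and leaves the higher rows untouched, so this step preserves the weakly-decreasing property by the induction hypothesis. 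It then remains to check that the successive applications of the three lifting rules of Definition \ref{sumskew} preserve this property as well.

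Tracking the chain reactions triggered by rules (2) and (3) is the crux of the argument: each such rule simultaneously lifts a cluster of columns, so several adjacent row lengths change at once, and one must verify that no violation of the form $rs_{i+1} > rs_i$ is ever produced. The analysis leans on the structural results already established in Lemma \ref{detaillifting1} and Remark \ref{explicitation3}, which pin down the local configuration forced around any column lifted in context (3), together with the observation that a cascade triggered by context (2) must emanate from a column of label $1$ just created by the current gluing. The hardest case is rule (3), where one must simultaneously maintain the saturation of every label-$1$ column (as already exploited in Lemma \ref{lemmapartialk}) and the weakly-decreasing property of the row shape; once this case analysis is completed, $rs(s^1(f))$ is a partition and $\lambda$ is a $k$-shape.
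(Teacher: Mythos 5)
There is a genuine gap here: after the (correct) reduction to showing that $rs(s^1(f))$ is a partition, your text is a plan rather than a proof. The entire substantive content — ``one must verify that no violation of the form $rs_{i+1}>rs_i$ is ever produced'', ``once this case analysis is completed'' — is announced but never carried out, and it is precisely this case analysis that constitutes the lemma. As written, nothing is actually proved beyond the base case of your induction.

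Moreover, the inductive route you propose (maintaining that $rs(s^j(f))$ is a partition for every intermediate $j$, and checking that the gluing and each of the three lifting rules preserves this) is not the paper's route, and it faces a real obstacle: the key structural fact the paper uses to control row lengths is that \emph{every} column labeled $1$ of $s^1(f)$ is saturated, which is guaranteed only for the final object by Hypothesis $H(1)$; the intermediate $s^j(f)$ may fail to be saturated in up to $\lceil (j-1)/2\rceil$ values of $i$, so the corresponding step of your induction cannot lean on saturation the way you suggest. The paper instead argues directly on $s^1(f)$: taking two consecutive rows $R,R'$ of lengths $x,x'$, it uses Lemma \ref{lemmapartialk} (every cell of $s^1(f)$ has hook length at most $k$, every anticoin has hook length exceeding $k$) to get $x'\leq x+1$ in one line, and then excludes $x'=x+1$ by showing it would force two equal-height columns labeled $1$ and $2$ in the wrong relative order, via a short case analysis on which lifting rule moved the relevant column. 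If you want to salvage your approach you would have to either prove a suitable saturation statement at every intermediate stage or find a different invariant; otherwise the direct argument on $s^1(f)$ is both shorter and the one the established lemmas are tailored to.
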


\begin{proof}
From Lemma \ref{lemmapartialk} we know that $s^1(f) = \partial^k(\lambda)$, and since $s^1(f)$ is a partial $k$-shape by construction, the sequence $cs^k(\lambda) = cs(s^1(f))$ is a partition. To prove that $\lambda$ is a $k$-shape, it remains to show that the sequence $rs^k(\lambda) = rs(s^1(f))$ is a partition. Let $R$ and $R'$ be two consecutive rows (from bottom to top) of $s^1(f)$ (see Figure \ref{preuvers}).
Let $x,x'$ be the respective lengths of $R,R'$ and $y_1 \geq y_2 \geq y_3$ the respective heights of the columns $C_1,C_2,C_3$ introduced by the picture. Since $s^1(f) = \partial^k(\lambda)$, we know that the quantity $h = y_2+x+1$ (which is the hook length of the cell glued to $C_2$ and $R$) exceeds $k$. Also, the hook length $h_1$ of the bottom cell $c_1$ of $C_1$ doesn't exceed $k$, so $x' = h_1-y_1+1 \leq k-y_1+1 \leq k-y_2+1 \leq x+1$. Now suppose that $x' = x+1$. Then $h_1 = y_1+x'-1 = y_1+x  \geq y_2 + x = h - 1 \geq k$, so $h_1 = k$ (which implies $y_1 = y_2$). Consequently $C_1$ and $C_2$ are two columns of height $y_1 = y_2$, and labeled by $1$ because $h_1 = k$. Also, we have $h_3 = y_3 + x - 1 = y_3 + x'-2 \leq y_1+x'-2 = k-1$. Since every column labeled by $1$ in $s^1(f)$ is saturated, it forces $C_3$ to be labeled by $2$, which implies the column $C_2$ has not been lifted in the context $(2)$ of Definition \ref{sumskew}. Now $y_2 + x = y_1 + x'-1 = k$ so $C_2$ has not been lifted in the context $(1)$ of Definition \ref{sumskew} either. Consequently $C_2$ has been lifted in the context $(3)$. Now, following Remark \ref{explicitation3}, it implies $C_2$ and $C_3$ have the same height, \textit{i.e.}, that $y_2 = y_3$. This is impossible because, by construction of $\varphi(f)$, columns of height $y_2 = y_3$ labeled by $2$ (including $C_3$) are positionned before columns of height $y_2$ labeled by $1$ (including $C_2$). As a conclusion, it is necessary that $x \geq x'$, then $rs^k(\lambda)$ is a partition and $\lambda$ is a $k$-shape.
\end{proof}

\begin{lem} \label{lemmairreducible}
For all $f \in SP_{k-1}$, the $k$-shape $\lambda = \varphi(f)$ is irreducible and $\overrightarrow{fr}(\lambda) = \overrightarrow{fix}(f)$.
\end{lem}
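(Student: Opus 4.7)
The plan is to split the lemma into its two claims, treating both by a close reading of the step-by-step construction of $s^1(f)$ in Definition \ref{definitionvarphi} and relying on Lemma \ref{lemmapartialk} for the fact that every label-$1$ column of $s^1(f)$ is saturated.

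For the identity $\overrightarrow{fr}(\lambda)=\overrightarrow{fix}(f)$, I would first observe that a column of height $i+1$ with label $1$ can enter $s^1(f)$ only at the single step $j=2i$, since $\lceil (j+1)/2\rceil=i+1$ combined with $t(j)=1$ forces $j=2i$. At that step, rule~(1) of Definition \ref{definitionvarphi} requires $f(2i')>2i'$ where $i'=f(2i)/2$, which in the case $f(2i)=2i$ (so $i'=i$) fails immediately; hence rule~(2) applies with $z_{2i}(f)=f(2i)/2-i=0$, and no such column is inserted. Conversely, when $f(2i)>2i$, both rule~(1) (when it triggers) and rule~(2) insert at least one column of height $i+1$ with label $1$. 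Such a column is saturated by Lemma \ref{lemmapartialk}, so when leftmost in its bottom row it sits in a row of length $k-i$ and contributes a cell to $H_{k-i}(\lambda)\cap V_{i+1}(\lambda)$. A label-$2$ column of height $i+1$, on the other hand, has bottom hook length at most $k-1$, so its bottom row has length at most $k-i-1$; since $rs^k(\lambda)$ is a partition by Lemma \ref{lemmars}, every row it meets has length at most $k-i-1<k-i$ and contributes no cell to $H_{k-i}(\lambda)$. Consequently $H_{k-i}(\lambda)\cap V_{i+1}(\lambda)=\emptyset$ iff $f(2i)=2i$, which is precisely $\overrightarrow{fr}(\lambda)=\overrightarrow{fix}(f)$.

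For irreducibility, the extremal cases fall out immediately from the height/label dictionary of $\varphi$. A column of height $k-1$ is produced only at the even step $j=2k-4$, hence carries label~$1$, and being saturated it cannot be rooted in a row of length $1$ (which would demand $1+(k-1)-1=k$); so $H_1(\lambda)\cap V_{k-1}(\lambda)=\emptyset$. A column of height $1$ is produced only at the odd step $j=1$, hence carries label~$2$, and rooting it in a row of length $k$ would give hook length $k>k-1$; so $H_k(\lambda)\cap V_1(\lambda)=\emptyset$. The sets $V_k(\lambda)$ and $H_{k+1}(\lambda)$ are empty, which dispatches the remaining corner constraints.

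For $u\in\{2,\ldots,k-1\}$, I plan to bound by $u-1$ the number of horizontal steps of the $k$-rim lying in $H_u(\lambda)\cap V_{k-u}(\lambda)$ and in $H_u(\lambda)\cap V_{k+1-u}(\lambda)$. Each such horizontal step is the bottom edge of a column of height $v\in\{k-u,k+1-u\}$ rooted in some row of length $u$, and by the dictionary above such columns can only come from the steps $j\in\{2v-2,2v-1\}$. I would group these columns according to the row in which they are ultimately rooted in $s^1(f)$, and argue, using the three lifting contexts of Definition \ref{sumskew} in the spirit of Remarks \ref{remarquesaturation} and \ref{explicitation3}, that every extra length-$u$ row hosting such a column can be matched with a distinct integer $i\in[k-2]$ on which the corresponding intermediate partial $k$-shape fails to be saturated. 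The counting identity $\lceil (j-1)/2\rceil+\lceil j/2\rceil=j$ used to verify Hypothesis $H(j)$ in Definition \ref{definitionvarphi} then caps this number by $u-1$.

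The main obstacle will be this last bookkeeping: tracking how liftings in contexts~(2) and~(3) propagate saturation across the intermediate partial $k$-shapes $s^{j}(f)$, and matching each length-$u$ row hosting a counted column to a previously unsaturated integer. Once carried out, the three parts above combine to show that $\varphi(f)\in IS_k$ with $\overrightarrow{fr}(\varphi(f))=\overrightarrow{fix}(f)$, completing the proof of Proposition \ref{varphikshape}.
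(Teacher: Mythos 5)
Your treatment of the equivalence $\overrightarrow{fr}(\lambda)=\overrightarrow{fix}(f)$ and of the extremal cases ($H_1\cap V_{k-1}$, $H_k\cap V_1$, $V_k$) is sound and essentially the paper's: label-$1$ columns of height $i+1$ appear exactly at step $j=2i$, are saturated, hence rooted in rows of length $k-i$, while label-$2$ columns of height $i+1$ only meet rows of length at most $k-i-1$; this gives $H_{k-i}(\lambda)\cap V_{i+1}(\lambda)=\emptyset\Leftrightarrow z_{2i}(f)=0\Leftrightarrow f(2i)=2i$.

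The gap is in the central irreducibility bound. For $u\in\{2,\dots,k-1\}$ you must show that $H_u(\lambda)\cap V_{k+1-u}(\lambda)$ and $H_u(\lambda)\cap V_{k-u}(\lambda)$ each contain at most $u-1$ horizontal steps of the $k$-rim, and here you only sketch a plan ("match each extra length-$u$ row with a distinct unsaturated integer") that you yourself flag as the main obstacle; as described it bounds the wrong quantity (rows rather than columns, i.e.\ horizontal steps) and borrows the counting identity used to verify Hypothesis $H(j)$, which controls the number of \emph{heights} in which an intermediate shape is unsaturated, not the number of columns of a fixed height. No such bookkeeping is needed. Each horizontal step of the $k$-rim in $H_u\cap V_v$ is the bottom edge of the bottom cell of a column of height $v$, and by your own dictionary the columns of height $v$ and label $1$ (resp.\ $2$) in $s^1(f)$ are exactly the $z_{2v-2}(f)$ (resp.\ $z_{2v-1}(f)$) columns glued at step $j=2v-2$ (resp.\ $j=2v-1$). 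Writing $i=k-u$: the horizontal steps in $H_u\cap V_{i+1}$ all come from the label-$1$ columns of height $i+1$ (the label-$2$ ones sit in rows of length at most $u-1$), hence number $z_{2i}(f)$; the horizontal steps in $H_u\cap V_{i}$ all come from the label-$2$ columns of height $i$ (the label-$1$ ones are saturated, hence rooted in rows of length $u+1$), hence number at most $z_{2i-1}(f)$. Definition \ref{definitionvarphi} gives $z_j(f)\le k-1-\lceil j/2\rceil$ in both of its cases (via Lemma \ref{saturation} in context $(1)$, and via $f(j)\le 2k-2$ in context $(2)$), which is exactly the required bound $u-1$. This direct count is the paper's argument and should replace your unfinished lifting analysis.
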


\begin{proof}
For all $i \in [k-2]$, let $n_i$ (resp. $m_i$) be the number of horizontal steps of the $k$-rim of $\lambda$ that appear inside the set $H_{k-i} (\lambda) \cap V_{i+1}(\lambda)$ (resp. inside the set $H_{k-i} (\lambda) \cap V_{i}(\lambda)$).
Recall that $\lambda$ is irreducible if and only if $(n_i,m_i) \in \{0,1,\hdots,k-1-i\}^2$ for all $i \in [k-2]$. Consider $i_0 \in [k-2]$. The number $n_{i_0}$ is precisely the number of saturated columns of height $i_0+1$ of the partial $k$-shape $s^1(f) = \partial^k(\lambda)$. Since $s^1(f)$ is saturated by construction, this number is the quantity $z_{2i_0}(f) < k-i_0$ according to Definition \ref{definitionvarphi}. This statement being true for any $i_0 \in [k-2]$, in particular, if $i_0 > 1$, there are $n_{i_0-1} = z_{2i_0-2}(f)$ columns of height $i_0$ and label $1$ in $s^1(f)$, thence the quantity $m_{i_0}$ is precisely the number $z_{2i_0-1}(f) < k-i_0$ of columns of height $i_0$ and label $2$. Also, the columns of height $1$ are necessarily labeled by $2$, so $m_1 = z_1(f) < k-1$. Consequently, the $k$-shape $\lambda$ is irreducible. Finally, for all $i \in [k-2]$, we have the equivalence $f(2i) = 2i \Leftrightarrow z_{2i}(f) = 0$. Indeed, if $f(2i) = 2i$ then by definition $z_{2i}(f) =  f(2i)/2 - i = 0$. Reciprocally, if $f(2i) > 2i$, then either $z_{2i}(f)$ is defined in the context $(1)$ of Definition \ref{definitionvarphi}, in which case $z_{2i}(f) > 0$, or $z_{2i}(f) = f(2i)/2 - i > 0$. Therefore, the equivalence is true and exactly translates into $\overrightarrow{fix}(f) = \overrightarrow{fr}(\lambda)$.
\end{proof}

\subsection{Algorithm $\phi : IS_k \rightarrow SP_{k-1}$}
\label{sec:phi}

\begin{defi}
Let $\lambda$ be an irreducible $k$-shape. For all $i \in [k-2]$, we denote by $x_{i}(\lambda)$ the number of horizontal steps of the $k$-rim of $\lambda$ inside the set $H_{k-i}(\lambda) \cap V_{i+1}(\lambda)$, and by $y_i(\lambda)$ the number of horizontal steps inside the set $V_i(\lambda) \backslash H_{k+1-i}(\lambda) \cap V_{i}(\lambda) = \bigsqcup_{j=1}^{k-i} H_j(\lambda) \cap V_i(\lambda)$. Finally, for all $j \in [2k-4]$, we set 
$$z_j(\lambda) = \begin{cases} y_i(\lambda) & \text{ if $j=2i-1$},\\
x_{i}(\lambda) & \text{ if $j = 2i$}. \end{cases}$$
\end{defi}
\hspace*{-5.8mm}
For example, if $\lambda$ is the irreducible $6$-shape represented in Figure \ref{exempleirreducible}, then $(z_j(\lambda))_{j \in [8]}=(3,2,1,3,2,0,0,1)$.
Note that in general, if $\lambda$ is an irreducible $k$-shape and $(t_1,t_2,\hdots,t_{k-2}) = \overrightarrow{fr}(\lambda)$, then $t_i = 1$ if and only if $x_{i}(\lambda) = 0$, for all $i$.

\begin{lem} \label{kshapeirreductiblecondition}
For all $\lambda \in IS_k$ and for all $j \in [2k-4]$, we have
$$z_j(\lambda) \in \{0,1,\hdots,k-1-\lceil j/2 \rceil\}.$$
\end{lem}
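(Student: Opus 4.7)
The plan is to split the inequality by the parity of $j$; nonnegativity of $z_j(\lambda)$ is immediate since it is defined as a cardinality, so only the upper bound $z_j(\lambda) \leq k-1-\lceil j/2\rceil$ needs argument. The even case will follow from a direct application of the irreducibility condition, while the odd case requires a finer structural analysis.

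For $j=2i$ even (so $i\in[k-2]$), we have $z_j(\lambda)=x_i(\lambda)=|H_{k-i}(\lambda)\cap V_{i+1}(\lambda)|$. Substituting $i\mapsto k-i$ (valid since $k-i\in[k]$) into the irreducibility inequality $|H_i(\lambda)\cap V_{k+1-i}(\lambda)|\leq i-1$ yields $|H_{k-i}(\lambda)\cap V_{i+1}(\lambda)|\leq k-i-1$, and since $\lceil j/2\rceil=i$ this is exactly the desired bound $k-1-\lceil j/2\rceil$.

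For $j=2i-1$ odd, $z_j(\lambda)=y_i(\lambda)=\sum_{j'=1}^{k-i}|H_{j'}(\lambda)\cap V_i(\lambda)|$, and I need $y_i(\lambda)\leq k-1-i$. The irreducibility inequalities applied analogously give only the single-term bounds $|H_{k-i}(\lambda)\cap V_i(\lambda)|\leq k-i-1$ and $|H_{k+1-i}(\lambda)\cap V_i(\lambda)|\leq k-i$ (the latter being the term excluded from the sum), so a term-by-term argument does not suffice since $y_i(\lambda)$ can receive nonzero contributions from several $|H_{j'}(\lambda)\cap V_i(\lambda)|$ with $j'<k-i$. The plan is to interpret $y_i(\lambda)$ as counting the columns of height $i$ in $\partial^k(\lambda)$ whose bottom cell sits in a row of skew-partition length at most $k-i$, and to exploit both partition conditions on $cs^k(\lambda)$ and $rs^k(\lambda)$ to control this count.

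The main obstacle is the odd case. Using that height-$i$ columns form a consecutive block in $\partial^k(\lambda)$ (from $cs^k(\lambda)$ being a partition), with bottoms occupying a weakly decreasing sequence of row indices, together with the monotonicity of row-lengths (from $rs^k(\lambda)$ being a partition), one shows that the columns with short-row bottoms must cluster at the leftmost positions of the block. Tying this packing back to the slack in the irreducibility bounds on $|H_{k-i}(\lambda)\cap V_i(\lambda)|$ and $|H_{k+1-i}(\lambda)\cap V_i(\lambda)|$ via a careful counting of bottom cells should deliver $y_i(\lambda)\leq k-1-i$.
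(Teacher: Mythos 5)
Your even case is correct and is exactly the paper's argument: substituting $i\mapsto k-i$ in the irreducibility condition gives that $H_{k-i}(\lambda)\cap V_{i+1}(\lambda)$ contains at most $k-i-1$ horizontal steps of the $k$-rim, which is the bound $x_i(\lambda)\le k-1-\lceil j/2\rceil$ for $j=2i$. (One small caution: the irreducibility condition bounds the number of horizontal steps of the $k$-rim inside the set, not the cardinality $|H_{k-i}(\lambda)\cap V_{i+1}(\lambda)|$ of the set of cells; since $x_i(\lambda)$ is also defined as a count of horizontal steps, the argument goes through, but the quantity you write is not literally the one the definition controls.)

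The odd case, however, is where the entire content of the lemma lies, and what you give there is a plan rather than a proof. You correctly diagnose that a term-by-term application of the irreducibility bounds cannot control $y_i(\lambda)=\sum_{j'=1}^{k-i}(\text{steps in }H_{j'}(\lambda)\cap V_i(\lambda))$, and the ingredients you name (both partition conditions, the consecutiveness of the height-$i$ block, hook lengths) are the right ones. But the sentence ``Tying this packing back to the slack in the irreducibility bounds \dots via a careful counting of bottom cells should deliver $y_i(\lambda)\le k-1-i$'' is precisely the step that needs to be carried out, and it is nontrivial. The paper's proof proceeds by contradiction from $y_i(\lambda)\ge k-i$ through a specific chain of deductions: the row $R_1$ beneath the row containing the first such column must have length $l'\ge k-i$ (because the anticoin to its left has hook length $>k$); the first group of these columns has size $l\le k-i-1$, since $l\ge k-i$ would force the hook length of the first bottom cell to equal $k-1$ and put $k-i$ steps into $H_{k-i}(\lambda)\cap V_i(\lambda)$, contradicting irreducibility; hence some column of the family intersects $R_1$, which pins down $l'=l''=k-i$; a further count shows $l+l'''\le k-i-1$, so some column has its top cell in $R_1$, and the partition condition on $rs^k(\lambda)$ then forces $k-i$ bottom cells into rows of length exactly $k-i$, i.e.\ into $H_{k-i}(\lambda)\cap V_i(\lambda)$, a second contradiction with irreducibility. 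None of these intermediate claims appears in your write-up, and without them the bound $y_i(\lambda)\le k-1-i$ is not established. As it stands, the proposal proves only the even half of the lemma.
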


\begin{proof}
By definition of an irreducible $k$-shape, we automatically have $z_{2i}(\lambda) = x_{i}(\lambda) < k-i$ for all $i \in [k-2]$. The proof of $z_{2i-1}(\lambda) = y_i(\lambda) < k-i$ is less straightforward. Suppose that $y_i(\lambda) \geq k-i$. Let $C_0$ be the first column (from left to right) of $\bigsqcup_{j=1}^{k-i} H_j(\lambda) \cap V_i(\lambda)$, let $R_0$ the row in which $C_0$ is rooted, and let $R_1$ be the row beneath $R_1$. We denote by $l \in [y_i(\lambda)]$ the number of consecutive columns of $\bigsqcup_{j=1}^{k-i} H_j(\lambda) \cap V_i(\lambda)$ whose bottom cells are located $R_0$, and $l'$ the length of $R_1$ (see Figure \ref{preuveirreduciblelessthan1}).
\begin{figure}[!h]
\centering
\begin{minipage}{.5\textwidth}
  \centering
\includegraphics[width=8cm]{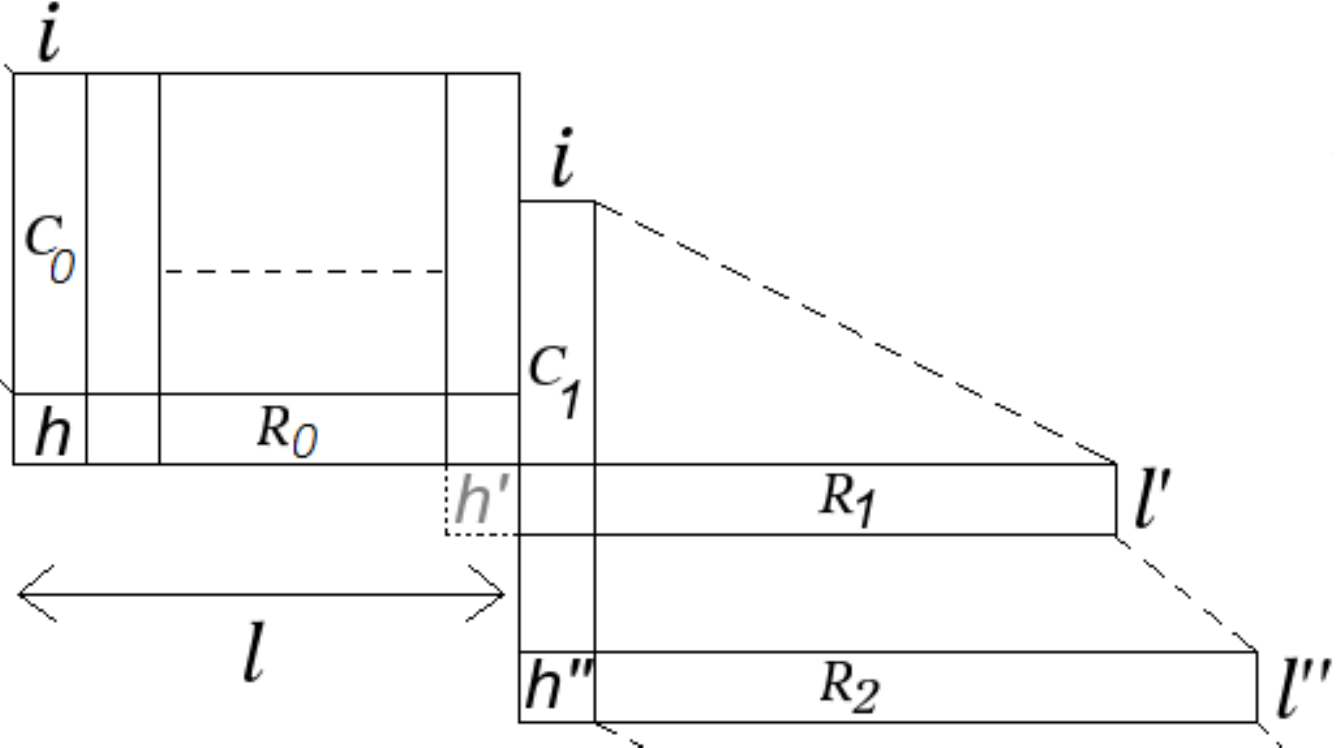}
\caption{}
\label{preuveirreduciblelessthan1}
\end{minipage}%
\begin{minipage}{.5\textwidth}
  \centering
\includegraphics[width=7.5cm]{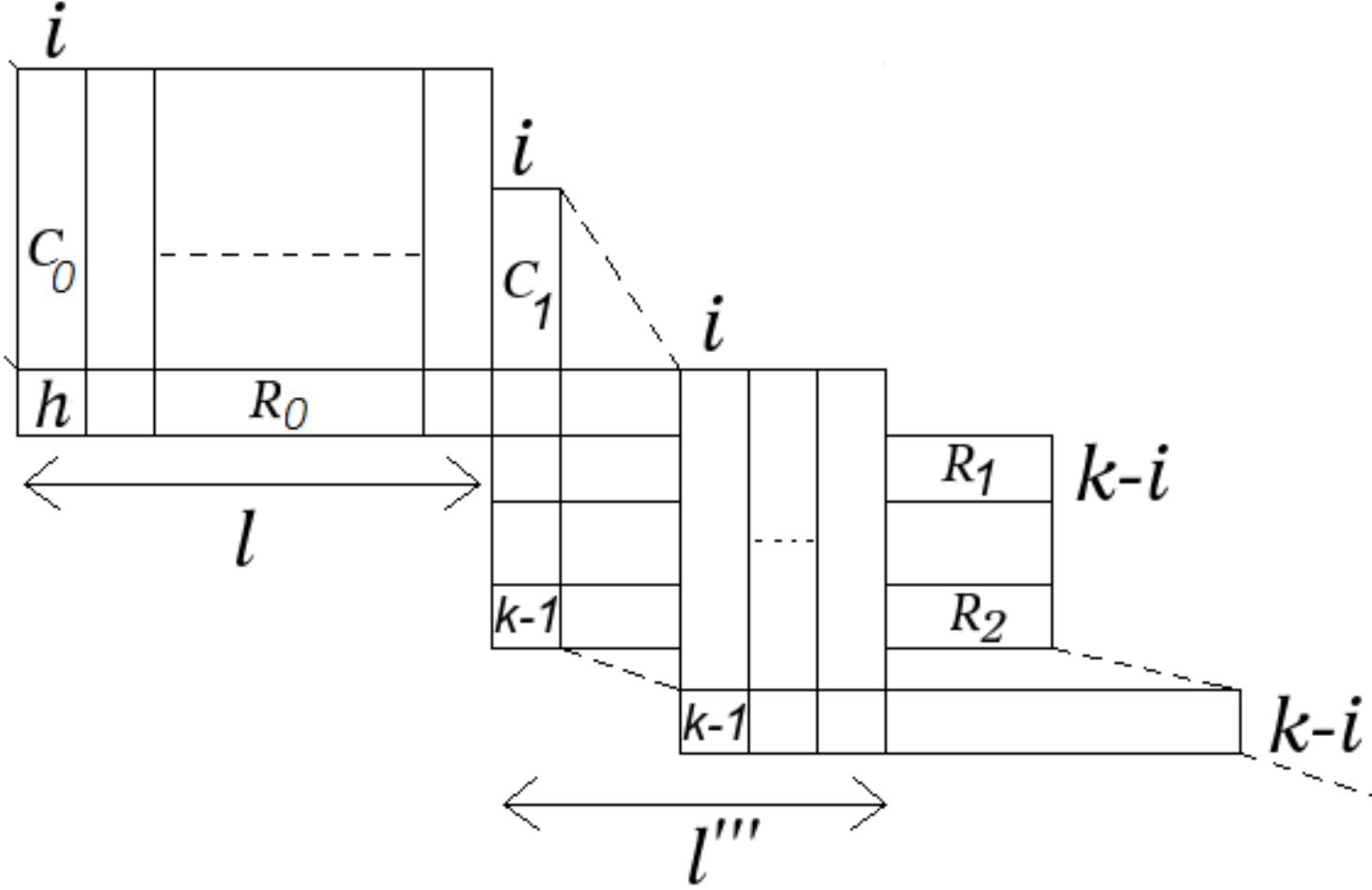}
\caption{}
\label{preuveirreduciblelessthan2}
\end{minipage}
\centering
\begin{minipage}{0.7\textwidth}
\includegraphics[width=12cm]{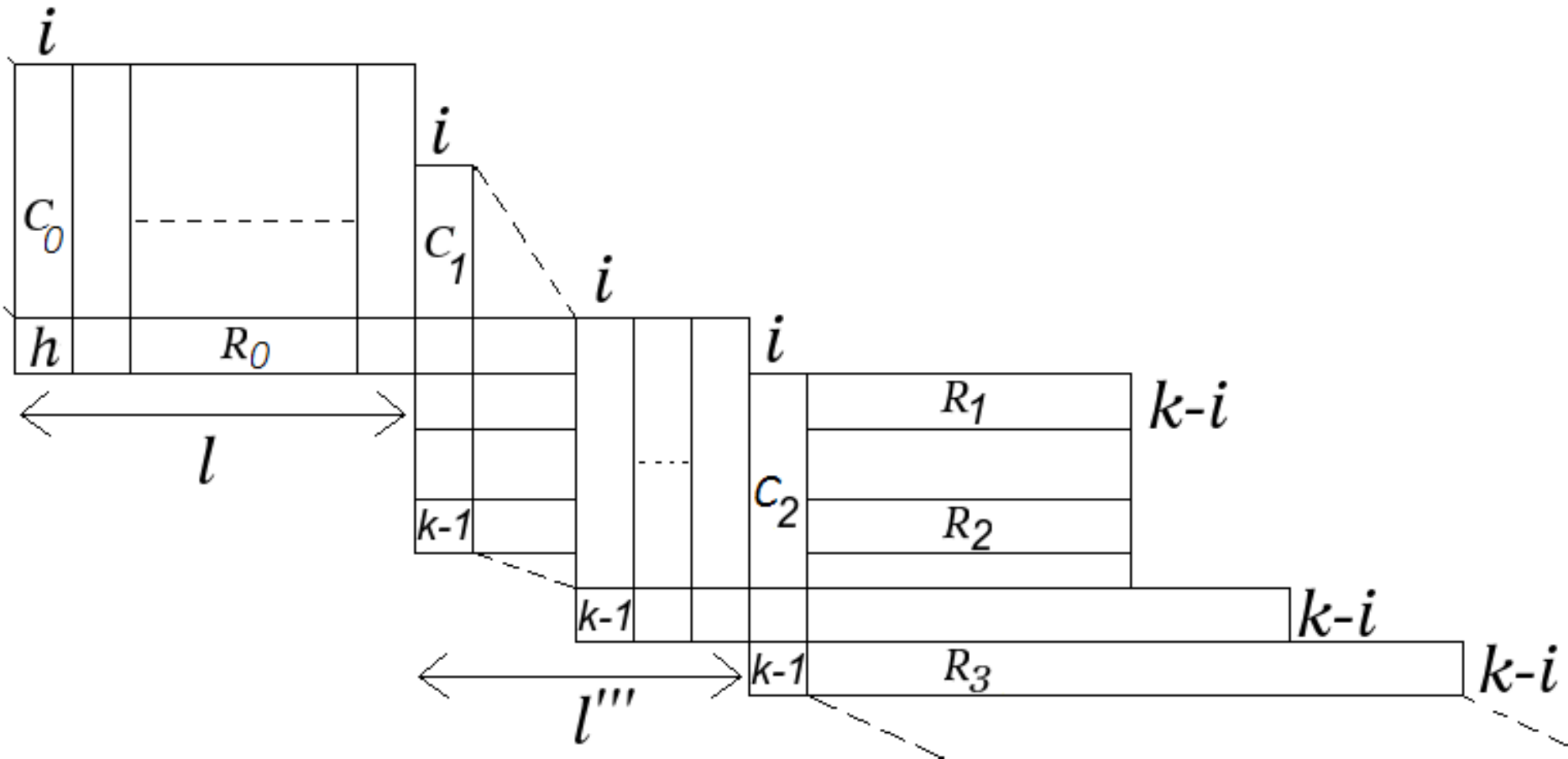}
\caption{}
\label{preuveirreduciblelessthan3}
\end{minipage}
\end{figure}
\\
The hook cell $h' = i+l'+1$ of the cell glued to the left of $R_1$ exceeds $k$, thence $l' \geq k-i$.
Now suppose that $l \geq k-i$: then, the hook length $h \leq k-1$ of the bottom cell $c_0$ of $C_0$ is such that $h \geq i+l-1 \geq k-1$ hence $h = k-1$. As a result, the first $l$ columns of $\bigsqcup_{j=1}^{k-i} H_j(\lambda) \cap V_i(\lambda)$ are in fact located in $H_{k-i}(\lambda) \cap V_i(\lambda)$, therefore $|H_{k-i}(\lambda) \cap V_i(\lambda)| \geq l \geq k-i$, which contradicts the irreducibility of $\lambda$. So $l \leq k-i-1 < y_i(\lambda)$. As a consequence, there exists a column of $\bigsqcup_{j=1}^{k-i} H_j(\lambda) \cap V_i(\lambda)$ which intersects $R_1$; in particular, consider the first column $C_1$ that does so, then its bottom cell $c_1$ is located in a row $R_2$ (whose length is denoted by $l''\geq l'$) and $c_1$ is hooked lengthed by $h'' = i+l''-1 < k $, thence $k-i \leq l' \leq l'' \leq k-i$, which implies $l' = l'' = k-i$. Now, let $l''' \in \{0,1,\hdots,k-i\}$ be the number of columns of $\bigsqcup_{j=1}^{k-i} H_j(\lambda) \cap V_i(\lambda)$ intersecting $R_1$ but whose top cells are not located in $R_1$ (see Figure \ref{preuveirreduciblelessthan2}).
Since $h = i-1+l+l''' \leq k-1$, we obtain $l + l''' \leq k-i$. With precision, we must have $l+l''' \leq k-i-1$: otherwise, we would have $l+l''' = k-i$ and $h = k-1$ which implies the first $l+l''' = k-i$ columns of $\bigsqcup_{j=1}^{k-i} H_j(\lambda) \cap V_i(\lambda)$ are located in $H_{k-i}(\lambda) \cap V_i(\lambda)$, which cannot be because $\lambda$ is irreducible. So $l+l''' < k-i \leq y_i(\lambda)$. It means there exists a column $C_2$ of $\bigsqcup_{j=1}^{k-i} H_j(\lambda) \cap V_i(\lambda)$ whose top box is located in $R_1$, which forces its bottom cell to be located in a row $R_3$ of length $k-i$ (see Figure \ref{preuveirreduciblelessthan3}) because $rs(\lambda)$ is a partition. But then the bottom cell of every column intersecting $R_1$ is located in a row of length $k-i$, therefore the bottom cells of those $k-i$ columns are elements of the set $H_{k-i}(\lambda) \cap V_i(\lambda)$, which cannot be because $\lambda$ is irreducible and the length of $R_1$ is $k-i$. As a conclusion, it is necessary that $y_i(\lambda) < k-i$.
\end{proof}

\begin{defi}
Let $\lambda \in IS_k$. We define a sequence $(s^j(\lambda))_{j \in [2k-3]}$ of partial $k$-shapes by $s^{2k-3}(\lambda) = \varnothing$ and 
$$s^j(\lambda) = s^{j+1}(\lambda) \oplus_{t(j)}^k \lceil (j+1)/2 \rceil^{z_j(\lambda)}.$$
\end{defi}

\begin{lem} \label{slambdakboundary}
We have $s^1(\lambda) = \partial^k(\lambda)$ for all $\lambda \in IS_k$.
\end{lem}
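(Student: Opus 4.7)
The plan is to prove the assertion by downward induction on $j \in \{2k-3, 2k-4, \ldots, 1\}$, with the inductive hypothesis being that $s^j(\lambda)$ coincides with the sub-partial-$k$-shape of $\partial^k(\lambda)$ obtained by keeping exactly those columns that would have been inserted at an algorithmic step $j' \geq j$ (namely, all columns of height strictly greater than $\lceil(j+1)/2\rceil$, together with those columns of height exactly $\lceil(j+1)/2\rceil$ whose label $t(j')$ corresponds to a step $j' \geq j$). The base case $j = 2k-3$ is immediate because $s^{2k-3}(\lambda) = \varnothing$ and $\partial^k(\lambda)$ contains no column of height $\geq k$.

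For the inductive step, suppose the claim holds at $j+1$. Since $s^j(\lambda) = s^{j+1}(\lambda) \oplus_{t(j)}^k \lceil(j+1)/2\rceil^{z_j(\lambda)}$ by definition, I would first verify that the integer $z_j(\lambda)$ matches the number of columns of height $\lceil(j+1)/2\rceil$ and label $t(j)$ present in $\partial^k(\lambda)$. This is immediate from the definitions: $z_{2i}(\lambda) = x_i(\lambda)$ counts columns of height $i+1$ whose bottom cell has hook length exactly $k$ (the label-$1$ columns), while $z_{2i-1}(\lambda) = y_i(\lambda)$ counts columns of height $i$ whose bottom cell has hook length at most $k-1$ (the label-$2$ columns). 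Moreover, irreducibility ensures that at the extremes ($j=1$ and $j=2k-4$) the appropriate missing classes (height-$1$ label-$1$ columns and height-$(k-1)$ label-$2$ columns) are indeed empty, as noted just after the definition of an irreducible $k$-shape.

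The substantive task is to check that applying the three lifting rules of Definition \ref{sumskew} when gluing the fresh rectangle onto $s^{j+1}(\lambda)$ reproduces the exact positioning dictated by $\partial^k(\lambda)$. This is a local geometric analysis: rule (1) lifts a freshly glued column precisely when its bottom cell would exceed the permitted hook length, reproducing the hook-length cap built into $\partial^k(\lambda)$; rule (2) aligns groups of new columns sharing a row with preexisting columns of different height or label, reflecting the partition structure of $cs^k(\lambda)$ and $rs^k(\lambda)$; and rule (3) preserves the saturation of the label-$1$ columns already present, mirroring the fact that label-$1$ columns of $\partial^k(\lambda)$ correspond to bottom cells of hook length exactly $k$.

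The main obstacle is rule (3) together with the delicate chain reactions it orchestrates when a newly inserted rectangle momentarily breaks the saturation of an already-present label-$1$ column. For this the preparatory results are indispensable: Remark \ref{explicitation3} pins down the exact configuration (with specific heights and labels forced on the neighboring columns) in which such a temporary desaturation can occur, and Lemma \ref{detaillifting1} rules out spurious context-$(1)$ liftings that would misalign columns. Carrying out the case analysis shows that the restored configuration agrees cell-by-cell with the relevant portion of $\partial^k(\lambda)$, completing the inductive step. Applying the induction down to $j=1$ yields $s^1(\lambda) = \partial^k(\lambda)$, since by then every column of $\partial^k(\lambda)$ has been accounted for.
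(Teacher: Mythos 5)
There is a genuine gap, and it lies in the very formulation of your inductive hypothesis. You assert that $s^j(\lambda)$ coincides with ``the sub-partial-$k$-shape of $\partial^k(\lambda)$ obtained by keeping exactly those columns that would have been inserted at an algorithmic step $j'\geq j$''. But the columns already present in $s^{j+1}(\lambda)$ are themselves \emph{lifted} during the computation of $s^j(\lambda)$: that is precisely what rules $(1)$--$(3)$ of Definition \ref{sumskew} do, and they lift different old columns by different amounts (rule $(2)$, for instance, lifts only those columns rooted in the one row containing the bottom cell of the newly glued column, leaving old columns rooted elsewhere untouched; rule $(1)$ can lift a single tall old column several times). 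Consequently the columns of an intermediate stage $s^j(\lambda)$ sit, in general, at levels and with mutual offsets different from those the same columns occupy in the final diagram $\partial^k(\lambda)$, so $s^j(\lambda)$ is \emph{not} the restriction of $\partial^k(\lambda)$ to the tall columns. The inductive step therefore cannot be closed in the form you propose: what you would actually have to compare at stage $j$ is a moving target, not a fixed sub-diagram of $\partial^k(\lambda)$. (Your count of columns via $z_{2i}(\lambda)=x_i(\lambda)$ and $z_{2i-1}(\lambda)=y_i(\lambda)$ is fine; the problem is entirely about \emph{positions}.)

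Independently of this, the part you label ``the substantive task'' --- checking that the lifting rules reproduce the positioning dictated by $\partial^k(\lambda)$ --- is exactly the content of the lemma, and your proposal only asserts that ``carrying out the case analysis'' works, without carrying it out. The paper avoids both difficulties by inducting not on the algorithm steps but on the columns of the two \emph{final} diagrams, taken from right to left: writing $\partial^k(\lambda)_q$ and $s^1(\lambda)_q$ for the $q$ rightmost columns, it observes that the $(q+1)$-th column of $\partial^k(\lambda)$ sits at the unique level where its bottom cell has hook length at most $k$ while the cell beneath it has hook length exceeding $k$; since every cell of $s^1(\lambda)$ has hook length at most $k$, the corresponding column of $s^1(\lambda)$ can only sit at that level or higher, and assuming it sits strictly higher one derives a contradiction in each of the three possible lifting contexts (using Remark \ref{explicitation3} for context $(3)$). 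If you want to salvage your step-by-step induction, you would need a much weaker invariant at intermediate stages (e.g.\ only the multiset of heights and labels of the columns of $s^j(\lambda)$, plus saturation data) and then a separate argument pinning down the final positions --- which essentially brings you back to the paper's column-by-column comparison.
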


\begin{proof}
Let $n$ be the number of columns of $[\lambda]$, which is obviously the same as for $\partial^k(\lambda)$ and $s^1(\lambda)$. For all $q \in [n]$, we define $\partial^k(\lambda)_q$ (respectively $s^1(\lambda)_q$) as the skew partition (resp. labeled skew partition) obtained by considering the $q$ first columns (from right to left) of $\partial^k(\lambda)$ (resp. $s^1(\lambda)$). The idea is to prove that $\partial^k(\lambda)_q = s^1(\lambda)_q$ for all $q \in [n]$ by induction (the statement being obvious for $q = 1$). In particular, for $q = n$, we will obtain $\partial^k(\lambda) = s^1(\lambda)$. Suppose that $\partial^k(\lambda)_q = s^1(\lambda)_q$ for some $q \geq 1$. The $(q+1)$-th column $C_{q+1}$ (whose bottom cell is denoted by $c_{q+1}$) of $\partial^k(\lambda)_{q+1}$ (from right to left) is glued to the left of $\partial^k(\lambda)_q$, at the unique level such that the hook length $h$ of $c_{q+1}$ doesn't exceed $k$, and the hook length $x$ of the cell beneath $c_{q+1}$ exceeds $k$. Since the hook length of every cell of $s^1(\lambda)$ doesn't exceed $k$, the $(q+1)$-th column $C'_{q+1}$ (whose bottom cell is denoted by $c'_{q+1}$) of $s^1(\lambda)_{q+1}$ is necessarily positionned above or at the same level as $C_{q+1}$ (see Figure \ref{preuveomegagrid2} where $C_{q+1}$ [resp. $C'_{q+1}$] has been drawed in black [resp. in red] and whose bottom cell is labeled by its hook length $h$ [resp. $h'$]).
\begin{figure}[!h]
\centering
\includegraphics[width=1.5cm]{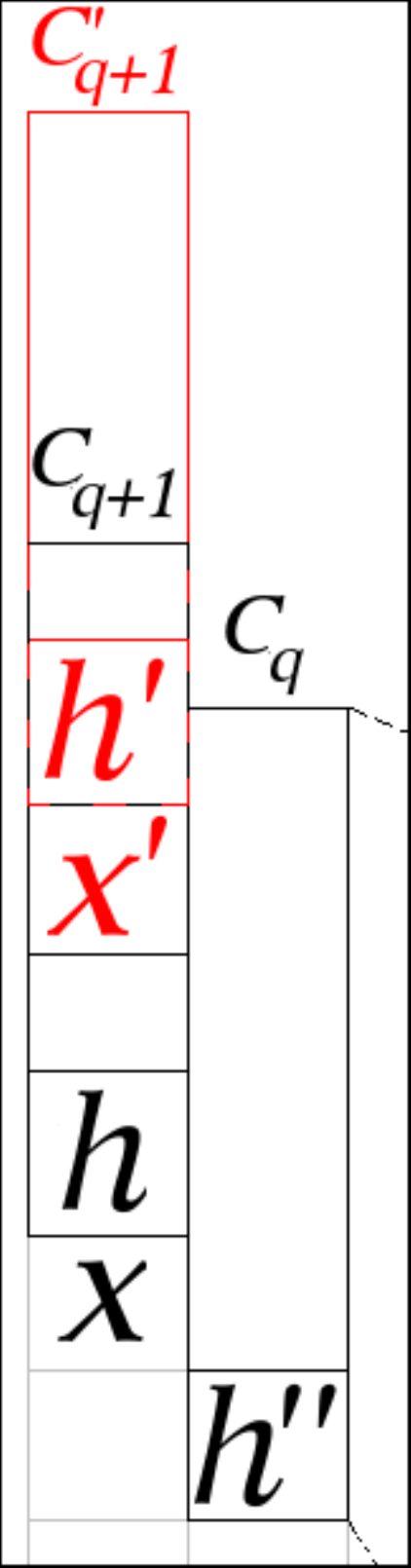}
  \captionof{figure}{Skew partitions $\partial^k(\lambda)_{q+1}$ and $s^1(\lambda)_{q+1}$.}
\label{preuveomegagrid2}
\end{figure}
\\
Now, suppose that the columns $C_{q+1}$ and $C'_{q+1}$ are not at the same level. In particular, the hook length $x'$ of the cell beneath $c'_{q+1}$ is such that $x' \leq h \leq k$. Also, the bottom cell $c'_q$ of the $q$-th column $C'_q$ of $s^1(\lambda)_{q+1}$ is a corner.
\begin{enumerate}
\item If $C'_{q+1}$ has been lifted in the context $(1)$ of Definition \ref{sumskew}, then since $x' \leq k$, the label of $C'_{q+1}$ is necessarily $2$, and $x' = k$. Consequently, the cell $c_{q+1}$ is in fact the cell labeled by $x'$, and $h = x' = k$. In particular, this implies $C'_{q+1}$ is labeled by $1$, which is absurd because $C_{q+1}$ and $C'_{q+1}$ must have the same label.
\item If $C'_{q+1}$ has been lifted in the context $(2)$ of Definition \ref{sumskew}, then in particular $C'_q$ is a column labeled by $1$. Consequently, the hook length of $c'_q$, which is the same as $c_q$ because $s^1(\lambda)_q = \partial^k(\lambda)_q$, is the integer $h'' = k$. Since $rs^k(\lambda)$ and $cs^k(\lambda)$ are partitions, this implies $h > h'' = k$, which is absurd.
\item Therefore, the column $C'_{q+1}$ has necessarily been lifted in the context $(3)$ of Definition \ref{sumskew}. According to Remark \ref{explicitation3}, it implies :
\begin{enumerate}
\item $C'_{q+1}$ and $C'_q$ have the same height and the same label $2$ (and $h'' = k-1$);
\item $C'_{q+1}$ is located one cell higher than $C'_q$. \end{enumerate}
In particular, from $(b)$, since $C'_{q+1}$ is supposed to be located at a higher level than $C_{q+1}$, then the cell $c_{q+1}$ is glued to the left of the cell $c_q$. Since $h'' = k-1$, we obtain $h = k$, which is in contradiction with $C'_{q+1}$ being labeled by $2$.
\end{enumerate}
So $C_{q+1}$ and $C'_{q+1}$ are located at the same level, thence $\partial^k(\lambda)_{q+1} = s^1(\lambda)_{q+1}$.
\end{proof}
\\ \ \\
Notice that Lemma \ref{slambdakboundary} is obvious if we know that $\lambda = \varphi(f)$ for some surjective pistol $f \in SP_{k-1}$, because in that case $s^j(\lambda) = s^j(f)$ for all $j$. 

\begin{defi}[Algorithm $\phi$] \label{definitionphi}
Let $\lambda \in IS_k$. We define $m(\lambda) \in \{0,1,\hdots,k-2\}$ and $$1\leq i_1(\lambda) < i_2(\lambda) < \hdots i_m(\lambda) \leq k-2$$ such that $$\{i_1(\lambda),i_2(\lambda),\hdots,i_{m(\lambda)}(\lambda)\} = \{i\in[k-2],x_{i} (\lambda) > 0\}$$ (this set may be empty). For all $p \in [m(\lambda)]$, let
$$j_p(\lambda) = \max \{j \in [2i_p(\lambda)-1],\text{$s^j(\lambda)$ is saturated in $i_p(\lambda)$}\}.$$
Let $L(\lambda)= [2k-4]$. For $j$ from $1$ to $2k-4$, if $j = j_p(\lambda)$ for some $p \in [m(\lambda)]$, and if there is no $j' \in L(\lambda)$ such that $j' < j$ and $\lceil j'/2 \rceil + z_{j'} = i_p(\lambda)$, then we set $L(\lambda) := L(\lambda) \backslash \{j_p(\lambda)\}$. Now we define $\phi(\lambda) \in \N^{[2k-2]}$ as the following: the integers $\phi(\lambda)(2k-2)$ and $\phi(\lambda)(2k-3)$ are defined as $2k-2$; afterwards, let $j \in [2k-4]$. 
\begin{itemize}
\item If $j \in L(\lambda)$ then $\phi(\lambda)(j)$ is defined as $2(\lceil j/2 \rceil + z_j(\lambda))$.
\item Else there exists a unique $p \in [m(\lambda)]$ such that $j = j_p(\lambda)$, and we define $\phi(\lambda)(j)$ as $2 i_p(\lambda)$.
\end{itemize}
\end{defi}

\begin{prop} \label{phipistol}
For all $\lambda \in IS_k$, the map $\phi(\lambda)$ is a surjective pistol of height $k-1$, such that
$\overrightarrow{fix}(\phi(\lambda)) = \overrightarrow{fr}(\lambda)$.
\end{prop}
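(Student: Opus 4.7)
The plan is to establish the proposition in three parts: pointwise bounds on $\phi(\lambda)(j)$, the statistic identity $\overrightarrow{fix}(\phi(\lambda))=\overrightarrow{fr}(\lambda)$, and surjectivity. I treat each in turn and then note the main technical obstacles.

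For the pointwise bounds $\phi(\lambda)(j)\in\{2,4,\ldots,2k-2\}$ and $\phi(\lambda)(j)\geq j$, I proceed case by case. For $j\in\{2k-3,2k-2\}$ the value is $2k-2$ by definition. For $j\in L(\lambda)$, Lemma~\ref{kshapeirreductiblecondition} gives $z_j(\lambda)\in\{0,\ldots,k-1-\lceil j/2\rceil\}$, so $\phi(\lambda)(j)=2(\lceil j/2\rceil+z_j(\lambda))$ is an even integer in $\{2\lceil j/2\rceil,\ldots,2k-2\}$ and hence at least $j$. For $j=j_p(\lambda)$ removed from $L(\lambda)$, $\phi(\lambda)(j)=2i_p(\lambda)\in\{2,\ldots,2k-4\}$, and $\phi(\lambda)(j)>j$ since $j_p(\lambda)\leq 2i_p(\lambda)-1$.

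The statistic identity reduces to showing $\phi(\lambda)(2i)=2i\iff x_i(\lambda)=0$ for each $i\in[k-2]$. The crux is that $z_{2i}(\lambda)=x_i(\lambda)$: when $x_i(\lambda)=0$ the operation $\oplus^k_1$ defining $s^{2i}(\lambda)$ glues zero columns to $s^{2i+1}(\lambda)$, so $s^{2i}(\lambda)=s^{2i+1}(\lambda)$. Consequently, for any $q$, the two partial $k$-shapes share the same saturation status in $i_q(\lambda)$; and since $j_q(\lambda)=2i$ would require $i<i_q(\lambda)$ (from $j_q\leq 2i_q-1$) and hence $2i+1\in[2i_q(\lambda)-1]$, the maximality defining $j_q(\lambda)$ yields a contradiction. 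Thus $2i\neq j_q(\lambda)$ for every $q$, so $2i$ remains in $L(\lambda)$ and $\phi(\lambda)(2i)=2(i+z_{2i}(\lambda))=2i$. Conversely, if $x_i(\lambda)>0$ then $i=i_p$ for some $p$, and either $2i\in L(\lambda)$ gives $\phi(\lambda)(2i)=2(i+x_i(\lambda))>2i$, or $2i=j_q(\lambda)$ was removed and $\phi(\lambda)(2i)=2i_q(\lambda)>2i$ with $i_q>i$.

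For surjectivity, $2k-2$ is hit at $j=2k-2$. Each $2i$ with $x_i(\lambda)=0$ is hit at $j=2i$ by the statistic identity just proved. Each $2i_p(\lambda)$ is hit by the removal mechanism: either some $j'<j_p(\lambda)$ in $L(\lambda)$ satisfies $\lceil j'/2\rceil+z_{j'}(\lambda)=i_p(\lambda)$, keeping $j_p(\lambda)$ in $L(\lambda)$ with $\phi(\lambda)(j')=2i_p(\lambda)$, or $j_p(\lambda)$ is removed and $\phi(\lambda)(j_p(\lambda))=2i_p(\lambda)$. The main technical obstacle is ensuring that $j_p(\lambda)$ is well-defined: the set $\{j\in[2i_p-1]:s^j(\lambda)\text{ is saturated in }i_p\}$ must be non-empty, which I would deduce from the saturation of $s^1(\lambda)=\partial^k(\lambda)$ (established via Lemma~\ref{slambdakboundary}) combined with the irreducibility of $\lambda$; one must also verify that the iterative construction of $L(\lambda)$ is unambiguous when distinct $p\neq q$ collide on $j_p(\lambda)=j_q(\lambda)$.
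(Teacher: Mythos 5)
Your proposal is correct and follows essentially the same route as the paper: the same case split on $j\in L(\lambda)$ versus $j=j_p(\lambda)\notin L(\lambda)$ for the pistol inequalities (using Lemma~\ref{kshapeirreductiblecondition} and $2i_p(\lambda)>j_p(\lambda)$), the same observation that $z_{2i}(\lambda)=0$ forces $s^{2i}(\lambda)=s^{2i+1}(\lambda)$ and hence $2i\in L(\lambda)$ and $\phi(\lambda)(2i)=2i$, and the same two-case surjectivity argument for each $2i_p(\lambda)$. The well-definedness points you flag are legitimately left implicit in the paper too; for the non-emptiness of the set defining $j_p(\lambda)$, note that the columns of height $i_p(\lambda)+1$ and label $1$ in $s^1(\lambda)=\partial^k(\lambda)$ are exactly those rooted in rows of length $k-i_p(\lambda)$, whose top-left cell has hook length $k$, so $s^1(\lambda)$ is saturated in $i_p(\lambda)$ and the set contains $1$.
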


For example, consider the irreducible $6$-shape $\lambda$ of Figure \ref{exempleirreducible}, such that  $(z_j(\lambda))_{j \in [8]} = (3,2,1,3,2,0,0,1)$. In particular $(x_1(\lambda),x_2(\lambda),x_3(\lambda),x_4(\lambda)) = (2,3,0,1)$ so $m(\lambda) = 3$ and $(i_1(\lambda),i_2(\lambda),i_3(\lambda)) = (1,2,4)$. Moreover, by considering the sequence of partial $6$-shapes $(s^8(\lambda),\hdots,s^1(\lambda))$, which is in fact the sequence $(s^8(f),\hdots,s^1(f))$ depicted in Figure \ref{tableau} (with $f = (2,8,4,10,10,6,8,10,10,10) \in SP_5$) because $\lambda = \varphi(f)$, we obtain $(j_2(\lambda),j_3(\lambda),j_1(\lambda)) = (3,2,1)$. Applying the algorithm of Definition \ref{definitionphi} on $L(\lambda) = [8]$, we quickly obtain $L(\lambda) = \{4,5,6,7,8\}$. Consequently, if $g = \phi(\lambda)$, then automatically $g(10)=g(9)=10$, afterwards $(g(1),g(2),g(3)) = (g(j_1(\lambda)),g(j_3(\lambda),g(j_2(\lambda)) = (2i_1(\lambda),2i_3(\lambda),2i_2(\lambda)) = (2,8,4)$ since $j_p(\lambda) \not\in L(\lambda)$ for all $p \in [3]$, and $g(j) = 2(\lceil j/2 \rceil + z_j(\lambda))$ for all $j \in L(\lambda)$. Finally, we obtain $g = (2,8,4,10,10,6,8,10,10,10) = f$ (and $\overrightarrow{fix}(g) = \overrightarrow{fr}(\lambda)$).
\\ \ \\
\textbf{Proof of Proposition \ref{phipistol}.}
Let $\lambda \in IS_k$ and $f = \phi(\lambda)$. We know that $f(2k-2) = f(2k-3) = 2k-4$. Consider $j \in [2k-4]$.
\begin{enumerate}
\item If $j = j_p(\lambda)$ for some $p \in [m(\lambda)]$ and if $j \not\in L(\lambda)$, then $f(j) = 2i_p(\lambda)$. By definition $2i_p(\lambda) > j_p(\lambda)$, so $2k-2 \geq f(j) > j$.
\item Else $f(j) = 2(\lceil j/2 \rceil + z_j(\lambda))$, so $2k-2 \geq f(j) \geq j$ following Lemma \ref{kshapeirreductiblecondition}.
\end{enumerate} 
Consequently $f$ is a map $[2k-2] \rightarrow \{2,4,\hdots,2k-2\}$ such that $f(j) \geq j$ for all $j \in [2k-2]$.
Now, we prove that $f$ is surjective. We know that $2k-2 = f(2k-2)$. Let $i \in [k-2]$.
\begin{itemize}
\item If $i = i_p(\lambda)$ for some $p \in [m(\lambda)]$, then either $j_p(\lambda) \not\in L(\lambda)$, in which case $2i = f(j_p(\lambda))$, or there exists $j < j_p(\lambda)$ in $L(\lambda)$ such that $\lceil j/2 \rceil + z_j = i$, in which case $2i = f(j)$.
\item Else $z_{2i}(\lambda) = 0$, which implies that $2i$ cannot be equal to any $j_p(\lambda)$ because $s^{2i}(\lambda) = s^{2i+1}(\lambda) \oplus_1^k (i+1)^{z_j(\lambda)} = s^{2i+1}(\lambda)$. Consequently $2i \in L(\lambda)$, thence  $f(2i) = 2(i+z_{2i}(\lambda)) = 2i$.
\end{itemize} 
Therefore $f \in SP_{k-1}$. Finally, for all $i \in [k-2]$, we have just proved that $z_{2i}(\lambda) = 0$ implies $f(2i) = 2i$. Reciprocally, if $f(2i) = 2i$, then necessarily $2i \in L(\lambda)$ (otherwise $2i$ would be $j_p(\lambda)$ for some $p$ and $f(2i)$ would be $2i_p(\lambda) > j_p(\lambda) = 2i$), meaning $2i = f(2i) = 2(i+z_{2i}(\lambda))$ thence $z_{2i}(\lambda) = 0$. The equivalence $z_{2i}(\lambda) = 0 \Leftrightarrow f(2i) = 2i$ for all $i \in [k-2]$ exactly translates into $\overrightarrow{fr}(\lambda) = \overrightarrow{fix}(f)$.\qed

\subsection{Proof of Theorem \ref{theoremebijection}}

At this stage, we know that $\varphi$ is a map $SP_{k-1} \rightarrow IS_k$ that transforms the statistic $\overrightarrow{fix}$ into the statistic $\overrightarrow{fr}$. The bijectivity of $\varphi$ is a consequence of the following proposition.

\begin{prop} \label{bijection}
The maps $\varphi : SP_{k-1} \rightarrow IS_k$ and $\phi : IS_k \rightarrow SP_{k-1}$ are inverse maps.
\end{prop}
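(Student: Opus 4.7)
The plan is to establish, for any pair $(f,\lambda) \in SP_{k-1} \times IS_k$ linked by either $\lambda = \varphi(f)$ or $f = \phi(\lambda)$, that the partial $k$-shape sequences $(s^j(f))_j$ and $(s^j(\lambda))_j$ coincide; since $s^1(\lambda) = \partial^k(\lambda)$ by Lemma \ref{slambdakboundary} and $s^1(f) = \partial^k(\varphi(f))$ by Lemma \ref{lemmapartialk}, this will suffice to recover $\lambda$ from $f$ and $f$ from $\lambda$.

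First, for $f \in SP_{k-1}$ and $\lambda = \varphi(f)$, I would verify the identity $z_j(\lambda) = z_j(f)$ for every $j \in [2k-4]$, so that $s^j(\lambda) = s^j(f)$ follows by downward induction on $j$ from the shared recursion and the common starting point $\varnothing$. The case $j=1$ is essentially contained in Lemma \ref{lemmairreducible}: $x_i(\lambda) = z_{2i}(f)$ counts the saturated label-$1$ columns of height $i+1$ in $s^1(f) = \partial^k(\lambda)$, while $y_i(\lambda) = z_{2i-1}(f)$ counts the label-$2$ columns of height $i$. To then deduce $\phi(\varphi(f)) = f$, I would match the two two-case decisions: an index $j$ is processed by case $(1)$ of Definition \ref{definitionvarphi} for $f$ with $f(j) = 2i$ if and only if $j = j_p(\lambda) \notin L(\lambda)$ and $i = i_p(\lambda)$, while every other $j$ lies in $L(\lambda)$ and is processed by case $(2)$. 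The saturation side of this correspondence rests on the monotonicity of saturation from Remark \ref{remarquesaturation}, which makes the index at which $s^{j+1}$ fails but $s^j$ attains saturation in $i$ unique and identifies it with $j_p(\lambda)$; the minimality condition $j = \min\{j' : f(j') = 2i\}$ from case $(1)$ translates, through the formula $f(j') = 2(\lceil j'/2\rceil + z_{j'}(f))$ valid under case $(2)$, into the exact removal criterion defining $L(\lambda)$ in Definition \ref{definitionphi}. Substitution then yields $\phi(\lambda)(j) = f(j)$ on all of $[2k-4]$, while the boundary values $f(2k-3) = f(2k-2) = 2k-2$ are trivially reproduced.

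For the reverse equality $\varphi(\phi(\lambda)) = \lambda$, I would run the same correspondence backwards. Setting $g = \phi(\lambda)$ and $\mu = \varphi(g)$, a downward induction on $j$ shows $z_j(g) = z_j(\lambda)$, hence $s^j(g) = s^j(\lambda)$ for every $j$, and in particular $\mu = \langle s^1(g)\rangle = \langle \partial^k(\lambda)\rangle = \lambda$. The inductive step splits into two cases: for $j \in L(\lambda)$, $g(j) = 2(\lceil j/2\rceil + z_j(\lambda))$ with $z_j(\lambda)$ admissible by Lemma \ref{kshapeirreductiblecondition}, so case $(2)$ of $\varphi$ applies (case $(1)$ being blocked because the monotonicity of saturation together with the construction of $L(\lambda)$ rules out the required minimality and non-saturation conditions) and yields $z_j(g) = z_j(\lambda)$ directly; for $j = j_p(\lambda) \notin L(\lambda)$, the removal condition and the defining property of $j_p(\lambda)$ guarantee that case $(1)$ of $\varphi$ is triggered with $i = i_p(\lambda)$, after which the uniqueness in Lemma \ref{saturation} forces $z_j(g) = z_j(\lambda)$.

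I expect the main obstacle to be the careful bookkeeping of the case distinctions, in particular matching the minimality condition of case $(1)$ of $\varphi$ with the removal criterion from $L(\lambda)$ in $\phi$. The subtle point is that the value $2i_p$ can legitimately appear as $f(j')$ through case $(2)$ at some $j' < j_p(\lambda)$, and one must verify that in such situations $\phi$ correctly keeps $j_p(\lambda)$ inside $L(\lambda)$ and $\varphi$ correctly refuses to trigger case $(1)$ at $j'$; threading these dependencies simultaneously through Remark \ref{remarquesaturation} and Lemma \ref{saturation} is where the delicate work lies.
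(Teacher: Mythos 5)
Your architecture is exactly the paper's: show $z_j(\lambda)=z_j(f)$ and hence $s^j(\lambda)=s^j(f)$ for all $j$, then match case $(1)$ of Definition \ref{definitionvarphi} against the indices removed from $L(\lambda)$ in Definition \ref{definitionphi}, in both directions, using Lemma \ref{lemmapartialk} and Lemma \ref{slambdakboundary} to pass from $s^1$ back to $\lambda$. The peripheral pieces of your plan are sound: $z_j(\lambda)=z_j(f)$ does follow from the counting in Lemma \ref{lemmairreducible}, and the uniqueness in Lemma \ref{saturation} does pin down $z_j$ once the right case is known to fire.

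The gap is that the one statement on which everything hinges --- ``$j$ is processed by case $(1)$ with target $2i$ if and only if $j=j_p(\lambda)\notin L(\lambda)$ and $i=i_p(\lambda)$'' --- is asserted and then deferred as ``delicate bookkeeping''; it is not bookkeeping, it is the mathematical core of the proposition. The obstruction is a circularity you name but do not resolve: the removal criterion for $L(\lambda)$ is recursive (whether $j_p(\lambda)$ is deleted depends on whether some smaller $j'$ with $\lceil j'/2\rceil+z_{j'}(\lambda)=i_p(\lambda)$ survives in $L(\lambda)$, and that $j'$ may itself be some $j_{p_1}(\lambda)$ whose fate depends on yet smaller indices), while the minimality condition $j=\min\{j':f(j')=2i\}$ of case $(1)$ refers to values $f(j')$ which, for $j'\notin L(\lambda)$, are themselves defined through other $i_{p'}(\lambda)$'s. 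The paper breaks this circle in its Lemma \ref{equivalence} ($j_p(\lambda)\notin L(\lambda)\Leftrightarrow j_p(\lambda)=j^p(\lambda)$, where $j^p(\lambda)=\min\{j:f(j)=2i_p(\lambda)\}$) by an infinite-descent argument: assuming the equivalence fails at $p$ forces $j^p(\lambda)=j_{p_1}(\lambda)$ for some $p_1\neq p$ at which it fails again with $j^{p_1}(\lambda)<j^p(\lambda)$, yielding an infinite strictly decreasing sequence in $[2k-4]$. Some argument of this kind is indispensable and is missing from your proposal; note that it is needed in both halves --- for $\phi\circ\varphi=\mathrm{id}$ to identify the case-$(1)$ indices with the complement of $L(\lambda)$, and for $\varphi\circ\phi=\mathrm{id}$ to guarantee that at $j=j_p(\mu)\notin L(\mu)$ the map $\varphi$ actually enters case $(1)$, i.e., that $j=j^p(\mu)$ and $s^{j+1}$ is not yet saturated in $i_p(\mu)$.
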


\begin{lem} \label{equivalence}
Let $(f,\lambda) \in SP_{k-1} \times IS_k$ such that $\lambda = \varphi(f)$ or $f = \phi(\lambda)$. Let $p \in [m(\lambda)]$ and $j^p(\lambda) := \min\{j \in [2k-4],f(j) = 2i_p(\lambda)\}$. The two following assertions are equivalent.
\begin{enumerate}
\item $j_p(\lambda) \not\in L(\lambda)$.
\item $j_p(\lambda) = j^p(\lambda)$.
\end{enumerate}
\end{lem}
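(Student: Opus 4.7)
The plan is to treat separately the two hypotheses $\lambda=\varphi(f)$ and $f=\phi(\lambda)$. In both cases, one has the identity $z_j(\lambda)=z_j(f)$ for every $j\in[2k-4]$ and hence $s^j(\lambda)=s^j(f)$: when $f=\phi(\lambda)$ this is built into Definition \ref{definitionphi} (the value $f(j)=2(\lceil j/2\rceil+z_j(\lambda))$ prescribed for $j\in L(\lambda)$ encodes $z_j(\lambda)=z_j(f)$), while when $\lambda=\varphi(f)$ it follows from Lemma \ref{lemmairreducible}.

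For the case $f=\phi(\lambda)$, I would prove both directions by directly unpacking Definition \ref{definitionphi}. The central observation is the dichotomy: for every $j'\in[2k-4]$, either $j'\in L(\lambda)$ and $f(j')=2(\lceil j'/2\rceil+z_{j'}(\lambda))$, or $j'$ equals some $j_{p'}(\lambda)\notin L(\lambda)$ and $f(j')=2i_{p'}(\lambda)$; moreover distinct indices $p'\neq p$ force $i_{p'}(\lambda)\neq i_p(\lambda)$. For $(1)\Rightarrow(2)$, the removal condition that caused $j_p(\lambda)\notin L(\lambda)$ states that no $j'<j_p(\lambda)$ in $L(\lambda)$ satisfies $\lceil j'/2\rceil+z_{j'}(\lambda)=i_p(\lambda)$; combined with the dichotomy this forces $f(j')\neq 2i_p(\lambda)$ for all $j'<j_p(\lambda)$, so $j^p(\lambda)=j_p(\lambda)$. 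The converse is the contrapositive: if $j_p(\lambda)\in L(\lambda)$, then by construction of $L(\lambda)$ there exists $j'<j_p(\lambda)$ in $L(\lambda)$ with $\lceil j'/2\rceil+z_{j'}(\lambda)=i_p(\lambda)$, so $f(j')=2i_p(\lambda)$, contradicting $j^p(\lambda)=j_p(\lambda)$.

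For the case $\lambda=\varphi(f)$, I expect both (1) and (2) to hold unconditionally, making the equivalence automatic. To prove $j_p(\lambda)=j^p(\lambda)$, I would track the introduction of the columns of height $i_p(\lambda)+1$ and label $1$ in $s^j(f)$: such columns first appear at step $j=2i_p(\lambda)$ via Rule (2) of Definition \ref{definitionvarphi} (applicable because $f(2i_p(\lambda))>2i_p(\lambda)$ by Lemma \ref{lemmairreducible}), and can only be saturated at a Rule (1) step for $i_p(\lambda)$, which by construction occurs exactly once, namely at $j=j^p(\lambda)$. Remark \ref{remarquesaturation} then propagates saturation in $i_p(\lambda)$ to every $j\leq j^p(\lambda)$, while it fails for $j^p(\lambda)<j\leq 2i_p(\lambda)-1$; hence $j_p(\lambda)=j^p(\lambda)$. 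For assertion (1), I would proceed by induction on $j$ along the loop of Definition \ref{definitionphi}: at step $j=j^{p''}(\lambda)$, every previous $j'=j^{p'}(\lambda)$ with $p'\neq p''$ has already been removed from $L(\lambda)$ by the inductive hypothesis, whereas every other $j'<j$ falls under Rule (2) of Definition \ref{definitionvarphi}, giving $f(j')=2(\lceil j'/2\rceil+z_{j'}(\lambda))\neq 2i_{p''}(\lambda)$ by minimality of $j^{p''}(\lambda)$. The removal condition is therefore satisfied and $j_p(\lambda)=j^p(\lambda)$ is removed.

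The main obstacle lies in the case $\lambda=\varphi(f)$: one needs to justify carefully that Rule (1) of Definition \ref{definitionvarphi} triggers for the value $i_p(\lambda)$ at exactly the index $j^p(\lambda)$ and nowhere else. This requires pinpointing both the step at which the relevant unsaturated columns appear (at the larger index $j=2i_p(\lambda)$) and the step at which their saturation is realized (at $j^p(\lambda)$), and checking via Remark \ref{remarquesaturation} that no intermediate step can restore saturation on its own. Once this bookkeeping is in place, the assertion becomes a formal consequence of the definitions in both cases.
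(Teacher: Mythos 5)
Your treatment of the case $f=\phi(\lambda)$ is correct, and in fact more direct than the paper's: you read both implications straight off the dichotomy built into Definition \ref{definitionphi} together with the removal rule for $L(\lambda)$, whereas the paper runs a descent argument producing an infinite decreasing sequence $(j^{p_i}(\lambda))_{i\geq 1}$. That half of your plan is sound.

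The case $\lambda=\varphi(f)$, however, contains a genuine error. You claim that assertions (1) and (2) hold unconditionally, on the grounds that the columns of height $i_p(\lambda)+1$ and label $1$ ``can only be saturated at a Rule (1) step for $i_p(\lambda)$''. This is false: saturation in $i_p(\lambda)$ can occur accidentally at a step governed by Rule (2) of Definition \ref{definitionvarphi}, when the columns glued at that step happen to extend the row carrying the unsaturated group to length exactly $k-i_p(\lambda)$. Concretely, take $k=5$, $f=(2,4,6,8,8,6,8,8)\in SP_4$ and the site $i_p(\lambda)=2$. One computes $z_6(f)=0$, $z_5(f)=1$, $z_4(f)=2$, $z_3(f)=1$, all by Rule (2) (Rule (1) is blocked at $j=6,5,4,3$ because $f(6)=6$ and $f(8)=8$), and the two columns of height $3$ and label $1$ are already saturated in $s^{3}(f)$: after the height-$2$ column of step $j=3$ is glued, their row has length $3$ and the hook length of its top left cell is $5=k$. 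Yet $j^p(\lambda)=\min\{j:f(j)=4\}=2$, so $j_p(\lambda)=3>2=j^p(\lambda)$; correspondingly $j_p(\lambda)\in L(\lambda)$, since $j'=2\in L(\lambda)$ satisfies $\lceil j'/2\rceil+z_{j'}(\lambda)=2=i_p(\lambda)$ and blocks the removal. The lemma is therefore a genuine biconditional in which both sides may fail simultaneously, and the content of this half is exactly the two implications under their respective hypotheses; the paper proves them by two separate descent arguments (assuming, say, $j_p(\lambda)\notin L(\lambda)$ and $j_p(\lambda)>j^p(\lambda)$, it shows $j^p(\lambda)$ must equal some $j_{p_1}(\lambda)\notin L(\lambda)$ with $j_{p_1}(\lambda)>j^{p_1}(\lambda)$, and iterates to obtain an impossible infinite descent). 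Your plan for this case cannot be repaired by more careful bookkeeping, because the unconditional statement you propose to prove is not true.
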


\textbf{Proof.}
Let $f \in SP_{k-1}$ and $\lambda = \varphi(f)$. In particular, we have $s^j(\lambda) = s^j(f)$ and $z_j(\lambda) = z_j(f)$ for all $j \in [2k-4]$. For all $p \in [m(\lambda)]$, by Definition \ref{definitionvarphi} the partial $k$-shape $s^{j^p(\lambda)}(f) = s^{j^p(\lambda)}(\lambda)$ is necessarily saturated in $i_p(\lambda)$, thence $j_p(\lambda) \geq j^p(\lambda)$. 
\begin{enumerate}
 \item
 If $j_p(\lambda) \not \in L(\lambda)$, suppose that $j_p(\lambda) > j^p(\lambda)$. Then, the partial $k$-shape $s^{j^p(\lambda)+1}(f) = s^{j^p(\lambda)+1}(\lambda)$ is saturated in $i_p(\lambda)$, meaning the integer $z_{j^p(\lambda)}(f) = z_{j^p(\lambda)}(\lambda)$ is defined as $f(j^p(\lambda))/2 - \lceil j^p(\lambda)/2 \rceil = i_p(\lambda) - \lceil j^p(\lambda)/2 \rceil$. Consequently, since $j_p(\lambda) \not \in L(\lambda)$ and $j^p(\lambda) < j_p(\lambda)$, the integer $j^p(\lambda)$ cannot belong to $L(\lambda)$ either. So $j^p(\lambda) = j_{p_1}(\lambda)$ for some $p_1 \neq p$ because $j_p(\lambda) \neq j^p(\lambda)$. Also, since $f(j_{p_1}(\lambda)) = 2i_p(\lambda) \neq 2i_{p_1(\lambda)}$, then $j_{p_1}(\lambda) > j^{p_1}(\lambda)$ (and $j_{p_1}(\lambda) = j^p(\lambda) \not\in L(\lambda)$). By iterating, we build an infinite decreasing sequence $(j^{p_i}(\lambda))_{i \geq 1}$ of distinct elements of $[2k-4]$, which is absurd. Therefore, it is necessary that $j_p(\lambda) = j^p(\lambda)$.
 \item Reciprocally, if $j_p(\lambda) = j^p(\lambda)$, suppose that $j_p(\lambda) \in L(\lambda)$. Then, there exists $j \in L(\lambda)$ such that $j < j_p(\lambda)$ and $z_j(\lambda) = i_p(\lambda) - \lceil j/2 \rceil$. Let $i \in [k-1]$ such that $f(j) = 2i$ (since $j < j_p(\lambda) = j^p(\lambda)$, we know that $i \neq i_p(\lambda)$). Suppose $s^j(f)$ is defined in the context $(1)$ of Definition \ref{definitionvarphi}. In particular $i = i_{p_1}(\lambda)$ for some $p_1 \in [m(\lambda)]$ (because $f(2i) > 2i$ which implies $z_{2i}(\lambda) = z_{2i}(f) > 0$), and $j = j^{p_1}(\lambda)$ and $s^{j+1}(f)$ is not saturated in $i$. Then, by definition the partial $k$-shape $s^j(f)$ is the first partial $k$-shape to be saturated in $i_{p_1}(\lambda)$ in the sequence $(s^{2k-4}(f) = s^{2k-4}(\lambda),\hdots,s^1(f) = s^1(\lambda))$, meaning $j = j_{p_1}(\lambda)$. To sum up, the integer $j = j^{p_1}(\lambda) = j_{p_1}(\lambda)$ doesn't belong to $L(\lambda)$, and $p_1 \neq p$ because $f(j) = 2i_{p_1}(\lambda)$ and $j < j^p(\lambda) = j_p(\lambda)$. By iterating, we build an infinite decreasing sequence $(j^{p_i}(\lambda))_{i \geq 1}$ of elements of $[2k-4]$, which is absurd. So $s^{j+1}(f)$ is necessarily defined in the context $(2)$ of Definition \ref{definitionvarphi}, meaning $z_j(f) = f(j)/2 - \lceil j/2 \rceil)$. Since $z_j(f) = z_j(\lambda) = i_p(\lambda) - \lceil j/2 \rceil$, we obtain $f(j) = 2i_p(\lambda)$, which is in contradiction with $j_p(\lambda) = j^p(\lambda) > j$. As a conclusion, it is necessary that $j_p(\lambda) \not \in L(\lambda)$.
 \end{enumerate}
Now let $\lambda \in IS_k$ and $f = \phi(\lambda)$. We consider $p \in [m(\lambda)]$.
\begin{enumerate}
\item If $j_p(\lambda) \not \in L(\lambda)$, suppose that $j_p(\lambda) \neq j^p(\lambda)$. Then, by definition $f(j_p(\lambda)) = 2i_p(\lambda)$, meaning $j_p(\lambda) > j^p(\lambda)$. Suppose now that $j^p(\lambda) \in L(\lambda)$, then $2 i_p(\lambda) = f(j^p(\lambda)) = 2(\lceil j^p(\lambda)/2 \rceil + z_{j^p(\lambda)}(\lambda))$. As a result, we obtain $z_{j^p(\lambda)}(\lambda) = i_p(\lambda) - \lceil j^p(\lambda)/2 \rceil$, which is in contradiction with $j_p(\lambda) \not \in L(\lambda)$. So $j^p(\lambda) \not \in L(\lambda)$, which implies $j^p(\lambda) = j_{p_1}(\lambda)$ for some $p_1 \neq p$, and necessarily $j_{p_1}(\lambda) \neq j^{p_1}(\lambda)$ since $f(j_{p_1}(\lambda)) = 2 i_p(\lambda) \neq 2i_{p_1}(\lambda)$. By iterating, we build a sequence $(j^{p_i}(\lambda))_{i \geq 1}$ of distinct elements of $[2k-4]$, which is absurd. So $j_p(\lambda) = j^p(\lambda)$.
\item Reciprocally, if $j_p(\lambda) = j^p(\lambda)$, suppose that $j_p(\lambda) \in L(\lambda)$. Then, there exists $j \in L(\lambda)$ such that $j < j_p(\lambda)$ and $z_j(\lambda) = i_p(\lambda) - \lceil j/2 \rceil$. Let $i \in [2k-4]$ such that $f(j) = 2i$. Because $j^p(\lambda) > j$, we have $i \neq i_p(\lambda)$. And since $j \in L(\lambda)$, we obtain $2i = f(j) = 2(\lceil j/2 \rceil + z_j(\lambda)) = 2i_p(\lambda)$, which is absurd. So $j_p(\lambda) \not \in L(\lambda)$.\qed
\\
\end{enumerate}
\textbf{Proof of Proposition \ref{bijection}.} Let $f \in SP_{k-1}$ and $\lambda = \varphi(f)$ and $g = \phi(\lambda)$. Let $j \in [2k-4]$ and $i \in [k-1]$ such that $f(j) = 2i$. 
\begin{enumerate}
\item If $s^j(f)$ is defined in the context $(1)$ of Definition \ref{definitionvarphi}, then there exists $p \in [m(\lambda)]$ such that $i = i_p(\lambda)$ and $j = j^p(\lambda) = j_p(\lambda)$. Consequently, in view of Lemma \ref{equivalence} with $\lambda = \varphi(f)$, we know that $j \not\in L(\lambda)$, implying $g(j) = g(j_p(\lambda)) = 2i_p(\lambda) = 2i = f(j)$. 
\item If $s^j(f)$ is defined in the context $(2)$ of Definition \ref{definitionvarphi}, then $z_j(f) = f(j)/2 - \lceil j/2 \rceil = i - \lceil j/2 \rceil$. Now it is necessary that $j \in L(\lambda)$: otherwise $j = j_p(\lambda)$ for some $p \in [m(\lambda)]$, and from Lemma \ref{equivalence} we would have $j = j_p(\lambda) = j^p(\lambda)$, which is impossible because we are in the context $(2)$ of Definition \ref{definitionvarphi}. So $j \in L(\lambda)$, implying $g(j) = 2(\lceil j/2 \rceil + z_j(\lambda)) = 2(\lceil j/2 \rceil + z_j(f)) = 2i = f(j)$. 
\end{enumerate}
As a conclusion, we obtain $g = f$ so $\phi \circ \varphi$ is the identity map of $SP_{k-1}$.\\
Reciprocally, let $\mu \in IS_k$ and $h = \phi(\mu)$. We are going to prove by induction that $s^j(\mu) = s^j(h)$ for all $j \in [2k-3]$. By definition $s^{2k-3}(\mu) = s^{2k-3}(h) = \varnothing$. Suppose that $s^{j+1}(\mu) = s^{j+1}(h)$ for some $j \in [2k-4]$. 
\begin{enumerate}
\item If $s^j(h)$ is defined in the context $(1)$ of Definition \ref{varphikshape}, then there exists $p \in [m(\lambda)]$ such that $h(j) = 2i_p(\mu)$, such that $j= j^p(\mu)$ and such that $s^{j+1}(h)$ is not saturated in $i_p(\mu)$. Since the partial $k$-shape $s^{j+1}(\mu) = s^{j+1}(h)$ is not saturated in $i_p(\mu)$, by definition $j \geq j_p(\mu)$. Suppose that $j > j_p(\mu)$. Since $j = j^p(\mu)$, we know from Lemma \ref{equivalence} (with $\lambda = \mu$ and $f = \phi(\lambda) = h$) that $j_p(\mu) \in L(\mu)$. It means there exists $j' < j_p(\mu) < j$ such that $j' \in L(\mu)$ and $\lceil j'/2 \rceil + z_{j'}(\mu) = i_p(\mu)$, implying $h(j') = 2i_p(\mu) = h(j)$, wich contradicts $j = j^p(\mu)$. So $j = j_p(\mu)$, therefore $s^j(\mu)$ is saturated in $i_p(\mu)$. But since we are in the context $(1)$ of Definition \ref{definitionvarphi}, the partial $k$-shape $s^j(h)$ is defined as $s^{j+1}(h) \oplus_{t(j)}^k (\lceil (j+1)/2 \rceil^{z_j(h)}$ where $z_j(h)$ is the unique integer $z \in [k-1 - \lceil j/2 \rceil]$ such that $s^{j+1}(h) \oplus_{t(j)}^k (\lceil (j+1)/2 \rceil^{z}$ is saturated in $i_p(\mu)$. Since the partial $k$-shape $s^j(\mu) = s^{j+1}(\mu) \oplus_{t(j)}^k (\lceil (j+1)/2 \rceil^{z_j(\mu)} = s^{j+1}(h) \oplus_{t(j)}^k (\lceil (j+1)/2 \rceil^{z_j(\mu)}$ is saturated in $i_p(\mu)$, we obtain $z_j(\mu) = z_j(h)$ and $s^j(\mu) = s^j(h)$. 
\item If $s^j(h)$ is defined in the context $(2)$ of Definition \ref{definitionvarphi}, then $s^j(h) = s^{j+1}(\mu) \oplus_{t(j)}^k\lceil (j+1)/2 \rceil^{z_j(h)}$ with $z_j(h) = h(j)/2 - \lceil j/2 \rceil$. Now either $h(j) = 2(\lceil j/2 \rceil + z_j(\mu))$, in which case we obtain $z_j(h) = z_j(\mu)$, or $h(j) = 2i_p(\mu)$ for some $p \in [m(\mu)]$ such that $j = j_p(\mu) \not \in L(\mu))$. In view of Lemma \ref{equivalence}, it means $j = j^p(\mu)$, which cannot happen because otherwise we would be in the context $(1)$ of Definition \ref{definitionvarphi}. So $z_j(h) = z_j(\mu)$ and $s^j(h) =s^j(\mu)$.
\end{enumerate}
By induction, we obtain $s^1(\mu) = s^1(h)$, thence $\mu = \varphi(h)$. Consequently, the map $\varphi \circ \phi$ is the identity map of $IS_k$.\qed

\section{Extensions}
\label{sec:workinprogress}

Dumont and Foata~\cite{DF} introduced a refinement of Gandhi polynomials $(Q_{2k}(x))_{k \geq 1}$ through the polynomial sequence $(F_k(x,y,z))_{k \geq 1}$ defined by $F_1(x,y,z) = 1$ and
$$F_{k+1}(x,y,z) = (x+y)(x+z)F_k(x+1,y,z) - x^2 F_k(x,y,z).$$
Note that $x^2F_k(x,1,1) = Q_{2k}(x)$ for all $k \geq 1$ in view of Formula \ref{inductionformulagandhi}. Now, for all $k \geq 2$ and  $f \in SP_{k}$, let $max(f)$ be the number of \textit{maximal} points of $f$ (points $j \in [2k-2]$ such that $f(j) = 2k$) and $pro(f)$ the number of \textit{prominent} points (points $j \in [2k-2]$ such that $f(i) < f(j)$ for all $i \in [j-1]$). For example, if $f$ is the surjective pistol $(2,4,4,8,8,6,8,8) \in SP_4$ depicted in Figure \ref{exempletableau}, then the maximal points of $f$ are $\{4,5\}$, and its prominent points are $\{2,4\}$. Dumont and Foata gave a combinatorial interpretation of $F_k(x,y,z)$ in terms of surjective pistols.
\begin{theo}[\cite{DF}] \label{theoremeDF}
For all $k \geq 2$, the Dumont-Foata polynomial $F_{k}(x,y,z)$ is symmetrical, and is generated by $SP_k$:
$$F_{k}(x,y,z) = \sum_{f \in SP_{k}} x^{max(f)}y^{fix(f)}z^{pro(f)}.$$
\end{theo}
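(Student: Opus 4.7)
The plan is to prove both claims simultaneously by induction on $k$, showing that the generating function
$$G_k(x,y,z) := \sum_{f \in SP_k} x^{max(f)} y^{fix(f)} z^{pro(f)}$$
satisfies the same initial condition and recurrence as $F_k(x,y,z)$. The base case $k=2$ is verified by a direct enumeration: $SP_2$ has exactly three elements whose joint weight sums to $xy+xz+yz = F_2$. Once $G_k = F_k$ is proved for all $k \geq 2$, the symmetry of $F_k$ in $(x,y,z)$ becomes equivalent to the equidistribution of the three statistics $max$, $fix$, $pro$ on $SP_k$, which I would obtain either by explicit combinatorial involutions on $SP_k$ exchanging pairs of statistics, or by an auxiliary inductive argument exploiting the stable form of the recurrence under the substitution $x \to x+1$.

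For the inductive step, I would construct a weight-preserving correspondence between $SP_{k+1}$ and a ``decorated'' version of $SP_k$. The guiding observation is that for every $f \in SP_{k+1}$ the pistol condition forces $f(2k+1) = f(2k+2) = 2k+2$, so the nontrivial data lies entirely in the assignment on $[2k]$. Each column $j \in [2k]$ satisfies either $f(j) = 2k+2$ (then $j$ is a max position of $f$, contributing a factor $x$) or $f(j) \leq 2k$; in the latter case the column locally looks like a column of a pistol in $SP_k$ after suitable relabeling. The expansion
$$G_k(x+1,y,z) = \sum_{g \in SP_k} \sum_{S \subseteq M(g)} x^{|S|} y^{fix(g)} z^{pro(g)}$$
(where $M(g)$ denotes the set of max positions of $g$) exactly records the choice of which max positions of an auxiliary $g \in SP_k$ are to be promoted to the new bottom row of $f$. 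The external factors $(x+y)$ and $(x+z)$ account for two independent choices at the highest two ``new'' columns $2k-1$ and $2k$, where the parity of the column index dictates which of $y$ (fix) or $z$ (pro) is available alongside $x$ (max). The subtraction $-x^2\, G_k(x,y,z)$ then removes, by inclusion--exclusion, the configurations in which both top columns are simultaneously promoted to max, since these would violate either the surjectivity of one of the intermediate rows of $f$ or the pistol inequality $f(j) \geq j$ at a critical index.

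The main obstacle is tracking the $pro$ statistic through this correspondence. Unlike $max$ and $fix$, prominence is a global property of the sequence: inserting a dot into one column can create or destroy left-to-right records elsewhere. To control this I would process the decorations in a canonical order (say, right-to-left) and verify, step by step, that each atomic promotion changes exactly one of $(max, fix, pro)$ by the expected amount, matching the monomial it contributes on the right-hand side of the recurrence. Once this bookkeeping is in place, the identity $G_{k+1}(x,y,z) = (x+y)(x+z)\, G_k(x+1,y,z) - x^2\, G_k(x,y,z)$ follows, hence $G_k = F_k$ by induction; the symmetry of $F_k$ is then visible as the equidistribution of the three statistics on $SP_k$, which I would prove via explicit involutions swapping the roles of the three markers on the surjective pistol tableaux.
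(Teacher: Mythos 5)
First, a point of comparison: the paper does not prove this statement at all. Theorem \ref{theoremeDF} is imported from Dumont and Foata \cite{DF} and used as a black box, so there is no in-paper argument to measure your sketch against; I can only assess it on its own terms.

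Your skeleton is the classical one: for $f\in SP_{k+1}$ set $g(j)=\min(f(j),2k)$ on $[2k]$, so that $f$ is recovered from $g\in SP_k$ together with a proper subset $S$ of the positions where $g$ takes the value $2k$ (properness encodes surjectivity onto the value $2k$, and that is what the subtracted $x^2G_k(x,y,z)$ removes --- not a violation of the pistol inequality, which promotion can never cause). This correctly reproduces the recurrence for the statistics $max$ and $fix$. The genuine gap is exactly where you defer: $pro$ does not localize, and the factor $(x+z)$ you attach to column $2k-1$ is false as stated. If column $2k-1$ is kept at value $2k$, it is a prominent point of $f$ if and only if $g$ has no maximal point at all (any $i\in M(g)$ with $i<2k-1$ satisfies $f(i)\geq 2k$ and destroys the record); if it is promoted, it is prominent if and only if it is the leftmost promoted position. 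So its contribution is $z$ raised to an indicator depending globally on $g$ and $S$, not a clean $x+z$, and the ``each atomic promotion changes exactly one statistic by the expected amount'' step is precisely the one that fails --- this is where the actual proof in \cite{DF} has to work. (Relatedly, the base case is not the routine check you claim: with the paper's literal definition of prominent points, $\sum_{f\in SP_2}x^{max(f)}y^{fix(f)}z^{pro(f)}$ does not come out to $xy+yz+zx$; one needs Dumont and Foata's exact convention for points saillants.) Finally, the symmetry is not free: the recurrence is visibly symmetric only under $y\leftrightarrow z$, the full symmetry in $x,y,z$ is itself a theorem of \cite{DF}, and the ``explicit involutions exchanging the three statistics'' you invoke are a well-known difficulty rather than a routine construction. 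As it stands, the proposal is a plausible outline for two of the three statistics, with the hard third statistic and the symmetry both left unproved.
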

In 1996, Han~\cite{H} gave another interpretation by introducing the statistic $sur(f)$ defined as the number of \textit{surfixed} points of $f \in SP_{k}$ (points $j \in [2k-2]$ such that $f(j) = j+1$; for example, the surfixed points of the surjective pistol $f \in SP_4$ of Figure \ref{exempletableau} are $\{1,3\}$).
\begin{theo}[\cite{H}] \label{theoremeHan}
For all $k \geq 2$, the Dumont-Foata polynomial $F_k(x,y,z)$ has the following combinatorial interpretation:
$$F_{k}(x,y,z) = \sum_{f \in SP_{k}} x^{max(f)}y^{fix(f)}z^{sur(f)}.$$
\end{theo}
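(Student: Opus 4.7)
My plan is to deduce Theorem~\ref{theoremeHan} from Theorem~\ref{theoremeDF} by constructing a bijection $\Psi : SP_{k} \to SP_{k}$ satisfying $max(\Psi(f)) = max(f)$, $fix(\Psi(f)) = fix(f)$, and $pro(\Psi(f)) = sur(f)$, since the two identities for $F_k(x,y,z)$ share the prefactor $x^{max(f)} y^{fix(f)}$. Before constructing $\Psi$, two structural observations will motivate the construction: the statistic $sur$ is supported on odd positions $j = 2i-1$ (because $f(j) = j+1$ must equal the even value $2i$), whereas $fix$ is supported on even positions, so $sur$ and $fix$ live on disjoint coordinates; and in the tableau representation of Subsection~\ref{sec:surjectivepistols}, the maximal points of $f$ correspond exactly to dots in the top row of the staircase, so any redistribution of dots among the non-top rows automatically preserves both $max$ and $fix$.

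To construct $\Psi$, I would scan the odd positions $j = 2i-1$ in $[2k-2]$ from right to left; at each step I would turn $j$ into a surfixed point of $\Psi(f)$ precisely when $j$ is a prominent point of $f$, by moving the dot of column $j$ to the cell corresponding to $f(j) = 2i$ and swapping it with the dot currently occupying that cell. The inverse map $\Psi^{-1}$ should be given by the symmetric procedure with the roles of $pro$ and $sur$ exchanged, and the main combinatorial content then reduces to verifying that $\Psi \circ \Psi$ is the identity.

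The main obstacle will be to check that the local swap at position $j$ is always legal: the resulting configuration must still be a surjective pistol (the constraint $f(j') \geq j'$ must be maintained, each column must still carry exactly one dot, and each row must remain nonempty), and the swap must not disturb positions already processed further to the right. I would handle this by an induction on the scanning index, maintaining the invariant that the surfixed set of the partial $\Psi(f)$ on $\{j, \dots, 2k-2\}$ coincides with the prominent set of $f$ on the same range. If the direct bijective route proves too delicate, a natural fallback is to show analytically that $H_k(x,y,z) := \sum_{f \in SP_k} x^{max(f)} y^{fix(f)} z^{sur(f)}$ satisfies the Dumont--Foata recursion $H_1(x,y,z) = 1$ and $H_{k+1}(x,y,z) = (x+y)(x+z)\, H_k(x+1,y,z) - x^2\, H_k(x,y,z)$, via a decomposition of $SP_{k+1}$ according to the preimage of the new top value $2k+2$ together with the statuses of the two extremal positions $2k-1$ and $2k$.
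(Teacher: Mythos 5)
The paper does not prove Theorem~\ref{theoremeHan}: it is quoted as a known result of Han \cite{H}, so there is no internal proof to compare yours against, and your argument must stand on its own. It does not. The core of your construction is a position-by-position identification of prominent points with surfixed points along the odd coordinates, but prominent points are not confined to odd positions. In the paper's own example $f=(2,4,4,8,8,6,8,8)\in SP_4$, the prominent points are $\{2,4\}$ (both even) while the surfixed points are $\{1,3\}$; a rule that only toggles odd positions $j$ according to whether $j$ is prominent would send this $f$ to a pistol with $0$ surfixed points even though $pro(f)=2$ (and symmetrically in the other direction), so neither $sur(\Psi(f))=pro(f)$ nor $pro(\Psi(f))=sur(f)$ can be achieved this way, and the invariant you propose to maintain during the right-to-left scan is false from the first step. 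There are further inaccuracies: maximal points correspond to dots in the \emph{bottom} row of the tableau (the row of length $2k$, carrying the value $2k$), not the top row, so your claim that redistributing dots "among the non-top rows" preserves $max$ and $fix$ is not justified as stated; and after moving the dot of column $j$ you must still verify that the vacated row retains a dot, which the vague "swap" does not guarantee.

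Your fallback --- proving directly that $H_k(x,y,z)=\sum_{f\in SP_k}x^{max(f)}y^{fix(f)}z^{sur(f)}$ satisfies $H_1=1$ and $H_{k+1}(x,y,z)=(x+y)(x+z)H_k(x+1,y,z)-x^2H_k(x,y,z)$ --- is the viable route and is essentially how such interpretations are established in the literature, but you only name it; the decomposition of $SP_{k+1}$ according to the fibre of $2k+2$ and the statuses of the positions $2k-1$ and $2k$ is precisely where all the combinatorial work lies, and none of it is carried out. As written, the proposal is not a proof.
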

Theorem \ref{theoremegandhi} then appears as a particular case of Theorem \ref{theoremeDF} or Theorem \ref{theoremeHan} by setting $x=z=1$ (and by applying the symmetry of $F_{k}(x,y,z)$).
Furthermore, for all $f \in SP_k$ and $j \in [2k-2]$, we say that $j$ is a  \textit{lined} point of $f$ if there exists $j' \in [2k-2] \backslash \{j\}$ such that $f(j) = f(j')$. We define $mo(f)$ (resp. $me(f)$) as the number of odd (resp. even) maximal points of $f$, and $fl(f)$ (resp. $fnl(f)$) as the number of lined (resp. non lined) fixed points of $f$, and $sl(f)$ (resp. $snl(f)$) as the number of lined (resp. non lined) surfixed points of $f$. Dumont~\cite{Dumont4} defined generalized Dumont-Foata polynomials $(\Gamma_k(x,y,z,\bar{x},\bar{y},\bar{z}))_{k \geq 1}$ by
$$\Gamma_{k}(x,y,z,\bar{x},\bar{y},\bar{z}) = \sum_{f \in SP_{k}} x^{mo(f)} y^{fl(f)} z^{snl(f)} \bar{x}^{me(f)} \bar{y}^{fnl(f)} \bar{z}^{sl(f)}.$$
This a refinement of Dumont-Foata polynomials, considering $\Gamma_k(x,y,z,x,y,z) = F_k(x,y,z)$. Dumont conjectured the following induction formula: $\Gamma_{1}(x,y,z,\bar{x},\bar{y},\bar{z}) = 1$ and
\begin{multline} \label{formulehorrible}
\Gamma_{k+1}(x,y,z,\bar{x},\bar{y},\bar{z}) = (x+\bar{z})(y+\bar{x}) \Gamma_{k}(x+1,y,z,\bar{x}+1,\bar{y},\bar{z}) \\
+ (x(\bar{y}-y) + \bar{x}(z-\bar{z}) - x \bar{x}) \Gamma_{k}(x,y,z,\bar{x},\bar{y},\bar{z}).
\end{multline}
This was proven independently by Randrianarivony~\cite{Ran} and Zeng~\cite{Zeng}. See also \cite{JV} for a new combinatorial interpretation of $\Gamma_{k}(x,y,z,\bar{x},\bar{y},\bar{z})$.\\ \ \\
Now, let $f \in SP_{k-1}$ and $\lambda = \varphi(f) \in IS_k$. For all $j \in [2k-4]$, we say that $j$ is a \textit{chained} $k$-site of $\lambda$ if $j \not\in L(\lambda)$. Else, we say that it is an \textit{unchained} $k$-site. In view of Lemma \ref{equivalence}, an integer $j \in [2k-4]$ is a chained $k$-site if and only if $j = j_p(\lambda) = j^p(\lambda)$ for some $p \in [m(\lambda)]$, in which case $f(j) = 2i_p(\lambda)$ (the integer $j$ is forced to be mapped to $2i_p(\lambda)$, thence the use of the word \textit{chained}). If $j$ is an unchained $k$-site, by definition $f(j) = 2(\lceil j/2 \rceil + z_j(\lambda))$. Consequently, every statistic of Theorems \ref{theoremeDF}, \ref{theoremeHan} and Formula \ref{formulehorrible} has its own equivalent among irreducible $k$-shapes. However, the objects counted by these statistics are not always easily pictured or formalized. We only give the irreducible $k$-shapes version of Theorem \ref{theoremeHan}.\\
Recall that for all $i \in [k-2]$, the integer $2i$ is a fixed point of $f$ if and only if $2i$ is a free $k$-site of $\lambda$, which is also equivalent to $z_{2i}(\lambda) = 0$. We extend the notion of free $k$-site to any $j \in [2k-4]$: the integer $j$ is said to be a free $k$-site if $z_j(\lambda) = 0$. Notice that free $k$-sites of $\lambda$ are necessarily unchained because $z_j(\lambda) = 0$ implies $s^j(\lambda) = s^{j+1}(\lambda)$ thence $j \neq j_p(\lambda)$ for all $p \in [m(\lambda)]$. We denote by $fro(\lambda)$ the quantity of odd free sites of $\lambda$.
We denote by $ful(\lambda)$ the quantity of \textit{full} $k$-site of $\lambda$ (namely, unchained $k$-sites $j \in L(\lambda)$ such that $z_j(\lambda) = k - 1 - \lceil j/2 \rceil$), and by $sch(\lambda)$ the quantity of \textit{surchained} $k$-sites (chained $k$-sites $j \in [2k-4]$ such that $j = j_p(\lambda)$ for some $p \in [m(\lambda)]$ such that $2i_p(\lambda) = j+1$). Theorem \ref{theoremeHan} can now be reformulated as follows.
\begin{theo} \label{generalizedtheorem}
For all $k \geq 2$, the Dumont-Foata polynomial $F_k(x,y,z)$ has the following combinatorial interpretation:
$$F_{k}(x,y,z) = \sum_{\lambda \in IS_{k+1}} x^{ful(\lambda)}y^{fr(\lambda)}z^{fro(\lambda) + sch(\lambda)}.$$
\end{theo}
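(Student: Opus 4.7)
The plan is to invoke Han's combinatorial interpretation (Theorem \ref{theoremeHan}) and transport the three statistics $max$, $fix$, and $sur$ on $SP_k$ through the bijection $\varphi : SP_k \rightarrow IS_{k+1}$ provided by Theorem \ref{theoremebijection}. Writing $\lambda = \varphi(f)$, the theorem will follow once one checks the statistic-wise translations
\[
max(f) = ful(\lambda), \qquad fix(f) = fr(\lambda), \qquad sur(f) = fro(\lambda) + sch(\lambda).
\]
The middle identity is part of Theorem \ref{theoremebijection} and requires no additional work.

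The key structural tool will be the dichotomy from Lemma \ref{equivalence}: every index $j \in [2k-2]$ is either chained, meaning $j = j_p(\lambda) = j^p(\lambda)$ for some $p \in [m(\lambda)]$ and $f(j) = 2i_p(\lambda)$ with $i_p(\lambda) \in [k-1]$, or unchained ($j \in L(\lambda)$), in which case $f(j) = 2(\lceil j/2 \rceil + z_j(\lambda))$ by rule (2) of Definition \ref{definitionvarphi}, with $z_j(\lambda)$ bounded as in Lemma \ref{kshapeirreductiblecondition} applied to $IS_{k+1}$.

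First I would handle $max(f)$. Since every chained site yields $f(j) = 2i_p(\lambda) \leq 2(k-1) < 2k$, every maximal point of $f$ is necessarily unchained. On an unchained $j$ the equation $f(j) = 2k$ is equivalent to $z_j(\lambda)$ attaining its maximum value $k-\lceil j/2 \rceil$, which is precisely the definition of a full $k$-site (read with $IS_{k+1}$ in mind). This gives $max(f) = ful(\lambda)$. Next I would handle $sur(f)$. Because $f$ takes even values, any surfixed point $j$ (satisfying $f(j) = j+1$) is odd, and I would split the count by the chained/unchained dichotomy. A chained odd $j = j_p(\lambda)$ is surfixed if and only if $2i_p(\lambda) = j+1$, which is the very definition of a surchained $k$-site; an unchained odd $j = 2i-1$ is surfixed if and only if $2i + 2z_j(\lambda) = 2i$, i.e.\ $z_j(\lambda) = 0$, which is the extended definition of an odd free $k$-site. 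Hence $sur(f) = sch(\lambda) + fro(\lambda)$.

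I do not expect any serious obstacle: the argument is a transparent translation through the algorithm $\varphi$ once the chained/unchained dichotomy is in hand. The main care required is notational, since the definitions of full, surchained, and extended free $k$-sites are phrased for $\lambda \in IS_k$ and must be applied here with $\lambda \in IS_{k+1}$, so each occurrence of $k$ in those definitions shifts to $k+1$ (in particular the ``full'' threshold $k-1-\lceil j/2 \rceil$ becomes $k-\lceil j/2 \rceil$, which is exactly the upper bound furnished by Lemma \ref{kshapeirreductiblecondition}).
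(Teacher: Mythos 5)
Your proposal is correct and follows essentially the same route as the paper: invoke Han's interpretation (Theorem \ref{theoremeHan}), transport $max$, $fix$, $sur$ through $\varphi$ via the chained/unchained dichotomy, and match them to $ful$, $fr$, $fro+sch$ with the index shift from $IS_k$ to $IS_{k+1}$ handled exactly as you describe. No further comment is needed.
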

\begin{proof}
First of all, maximal points of $f$ are full $k$-sites of $\lambda$: if $f(j) = 2k-2$ then $z_j(f)$ is necessarily defined in the context $(2)$ of Definition \ref{definitionvarphi}, thence $z_j(\lambda) = z_j(f) = f(j)/2 - \lceil j/2 \rceil = k-1-\lceil j/2 \rceil$, and $j \in L(\lambda)$ because otherwise $f(j)$ would equal $2i_p(\lambda) < 2k-2$ for some $p \in [m(\lambda)]$. So $j$ is a full $k$-site of $\lambda$. Reciprocally, if $j \in L(\lambda)$ is such that $z_j(\lambda) = k-1-\lceil j/2 \rceil$, then $f(j) = 2(\lceil j/2 \rceil + z_j(\lambda)) = 2k-2$ so $j$ is a maximal point of $f$. Afterwards, surfixed points of $f$ are odd free $k$-sites and surchained $k$-sites of $\lambda$: if $f(j) = j+1$, then $j = 2i-1$ for some $i \in [k-1]$ and either $j \in L(\lambda)$, in which case $f(j) = 2(i + z_j(\lambda)) = 2i$ thence $j$ is an odd free $k$-site, or $j = j_p(\lambda) = j^p(\lambda)$ for some $p \in [m(\lambda)]$ such that $2i_p(\lambda) = 2i = j+1$, \textit{i.e.}, the integer $j$ is a surchained $k$-site. Reciprocally, if $j$ is an odd free $k$-site then $f(j) = 2(\lceil j/2 \rceil + z_j(\lambda)) = 2(\lceil j/2 \rceil) = j+1$, and if $j$ is a surchained $k$-site then in particular $f(j) = 2i_p(\lambda) = j+1$ for some $p \in [m(\lambda)]$.
As a conclusion, the result comes from Theorem \ref{theoremebijection}.
\end{proof}

\section*{Aknowledgement}

I thank Jiang Zeng for his comments and useful references.
\nocite{*}
%\bibliographystyle{abbrvnat}
% use the following instead if you encounter problems 
\bibliographystyle{alpha}
\bibliography{sample}

\begin{thebibliography}{LLMS13}

\bibitem[Car71]{Carlitz}
Leonard Carlitz.
\newblock {A} conjecture concerning {G}enocchi numbers.
\newblock {\em Norske Vid. Selsk. Skr. (Trondheim)}, 9:4, 1971.

\bibitem[DF76]{DF}
Dominique Dumont and Dominique Foata.
\newblock Une propri\'et\'e de sym\'etrie des nombres de genocchi.
\newblock {\em Bull. Soc. Math. France}, 104:433--451, 1976.

\bibitem[DR94]{Dumont3}
Dominique Dumont and Arthur Randrianarivony.
\newblock {D\'erangements et nombres de Genocchi}.
\newblock {\em Disc. Math.}, 132:37--49, 1994.

\bibitem[Dum72]{Dumont1}
Dominique Dumont.
\newblock {S}ur une conjecture de {G}andhi concernant les nombres de
  {G}enocchi.
\newblock {\em Disc. Math.}, 1:321--327, 1972.

\bibitem[Dum74]{Dumont2}
Dominique Dumont.
\newblock {I}nterpr\'etations combinatoires des nombres de {G}enocchi.
\newblock {\em Duke Math. J.}, 41:305--318, 1974.

\bibitem[Dum95]{Dumont4}
Dominique Dumont.
\newblock {Conjectures sur des sym\'etries ternaires li\'ees aux nombres de
  Genocchi.}
\newblock In FPSAC 1992, editor, {\em Discrete Math.}, number 139, pages
  469--472, 1995.

\bibitem[Han96]{H}
Guo~Niu Han.
\newblock Sym\'etries trivari\'ees sur les nombres de genocchi.
\newblock {\em Europ. J. Combinatorics}, 17:397--407, 1996.

\bibitem[HM11]{HM}
Florent Hivert and Olivier Mallet.
\newblock {C}ombinatorics of $k$-shape and {G}enocchi numbers.
\newblock In FPSAC 2011, editor, {\em Discrete Math. Theor. Comput. Sci.
  Proc.}, pages 493--504, Nancy, 2011.

\bibitem[JV11]{JV}
Matthieu Josuat-Verg\`es.
\newblock {Dumont-Foata polynomials and alternative tableaux}.
\newblock {S\'em. Lothar. Combin. 64, Art. B64b, 17pp}, 2011.

\bibitem[LLMS13]{LLMS}
Thomas Lam, Luc Lapointe, Jennifer Morse, and Marc Shimozono.
\newblock {T}he poset of $k$-shapes and branching rules for $k$-schur
  functions.
\newblock {\em Mem. Amer. Math. Soc.}, 223(1050), 2013.

\bibitem[Mal11]{HM2}
Olivier Mallet.
\newblock {C}ombinatoire des $k$-formes et nombres de {G}enocchi.
\newblock S\'eminaire de combinatoire et th\'eorie des nombres de {l'ICJ}, may
  2011.

\bibitem[{OEI}]{genocchi}
{OEIS Foundation Inc. (2011)}.
\newblock {The On-Line Encyclopedia of Integer Sequences}.
\newblock \url{http://oeis.org/A110501}.

\bibitem[Ran94]{Ran}
Arthur Randrianarivony.
\newblock {Polyn\^omes de Dumont-Foata g\'en\'eralis\'es}.
\newblock {S\'em. Lothar. Combin. 32, Art. B32d, 12pp}, 1994.

\bibitem[RS73]{Riordan}
John Riordan and Paul~R. Stein.
\newblock {P}roof of a conjecture on {G}enocchi numbers.
\newblock {\em Discrete Math.}, 5:381--388, 1973.

\bibitem[Sta99]{Stanley2}
R.P. Stanley.
\newblock {\em {E}numerative {C}ombinatorics}, volume~2.
\newblock Cambridge University Press,Cambridge, 1999.

\bibitem[Zen96]{Zeng}
Jiang Zeng.
\newblock {Sur quelques propri\'et\'es de sym\'etrie des nombres de Genocchi}.
\newblock {\em Disc. Math.}, 153:319--333, 1996.

\end{thebibliography}
\label{sec:biblio}

\end{document}